\def \S {\mathbf{S}}
\def \O {\mathcal{O}}
\def \Y {\mathcal{Y}}
\def \R {\mathbb{R}}
\def \w {\mathbf{w}}
\def \v {\mathbf{v}}
\def \x {\mathbf{x}}
\def \E {\mathrm{E}}
\def \x {\mathbf{x}}
\def \e {\mathbf{e}}
\def \1 {\mathbf{1}}
\def \z {\mathbf{z}}
\def \s {\mathbf{s}}
\def \y {\mathbf{y}}
\def \u {\mathbf{u}}
\def \y {\mathbf{y}}
\def \E {\mathrm{E}}
\def \x {\mathbf{x}}
\def \z {\mathbf{z}}
\def \u {\mathbf{u}}
\def \w {\mathbf{w}}
\def \R {\mathbb{R}}
\def \S {\mathcal{S}}
\def \v {\mathbf{v}}
\def \h {\mathbf{h}}
\def \s {\mathbf{s}}
\def \teta {\tilde{\eta}} 
\def \State {\STATE}
\newtheorem{assumption}{Assumption} 
\newlength\myindent
\begin{document}

\title{Unified Convergence Analysis for Adaptive Optimization with Moving Average Estimator}

        
\author{\name{Zhishuai Guo$^*$}\email{zhishuai-guo@uiowa.edu}\\
\name{Yi Xu$^\dagger$}\email{yixu@alibaba-inc.com}\\
\name{Wotao Yin$^\ddagger$}\email{wotao.yin@alibaba-inc.com}\\
\name{Rong Jin$^\ddagger$}\email{jinrong.jr@alibaba-inc.com}\\
\name{Tianbao Yang$^*$}\email{tianbao-yang@uiowa.edu}\\
  \addr$^*$Texas A\&M University, College Station, TX 77840, USA\\
   \addr$^\dagger$Dalian University of Technology, Dalian, 116024, Liaoning, China \\ 
    \addr$^\ddagger$Alibaba Group, Bellevue, WA 98004, USA \\
}


\maketitle

\begin{center}
    V1: Apr 30th, 2021 \\
    This Version (V7): Apr 4th,  2025\footnote{V3 (Jan 13, 2022)  simplified the algorithms for min-max problem and bilevel problem. And for the min-max problem, V3 improved the dependence on condition number, i.e., from $O(\kappa^{4.5} \epsilon^3)$ to $O(\kappa^3/\epsilon^3)$. 
    V4 showed experimental results and extended to problems under PL condition for both minimization problems and min-max problems. The results for bilevel problems are presented in the appendix. Accepted to Machine Learning. 
    }  
\end{center}
\begin{abstract}
Although adaptive optimization algorithms have been successful in many applications, there are still some mysteries in terms of convergence analysis that have not been unraveled. 
This paper provides a novel non-convex analysis of adaptive optimization to uncover some of these mysteries. 
Our contributions are three-fold.
First, we show that an increasing or large enough momentum parameter for the first-order moment used in practice is sufficient to ensure the convergence of adaptive algorithms whose adaptive scaling factors of the step size are bounded. 
Second, our analysis gives insights for practical implementations, e.g., increasing the momentum parameter in a stage-wise manner in accordance with stagewise decreasing step size would help improve the convergence. 
Third, the modular nature of our analysis allows its extension to solving other optimization problems, e.g., compositional, min-max and bilevel problems.  
As an interesting yet non-trivial use case, we present algorithms for solving non-convex min-max optimization and bilevel optimization that do not require using large batches of data to estimate gradients or double loops as the literature do.
Our empirical studies corroborate our theoretical results.
\end{abstract}

\begin{keywords}
    Adaptive Optimization, Min-Max Problems, Bilevel Problems
\end{keywords}

\section{Introduction}\label{sec1} 
Since its invention in 2014, the Adam optimizer~\citep{kingma2014adam} has received tremendous attention and has been widely used in practice for training deep neural networks.  Many variants of Adam were proposed for improving its performance, e.g.~\citep{DBLP:conf/nips/ZaheerRSKK18,luo2018adaptive,Liu2020On}.
Its analysis for non-convex optimization has also received a lot of attention~\citep{DBLP:conf/iclr/ChenLSH19}. 
For more generality, we consider a family of adaptive algorithms. The update for minimizing$F: \R^n\rightarrow \R$ is given by:
 \begin{equation}
\begin{aligned}\label{eqn:adam}
&\v_{t+1} = \beta_t\v_t + (1-\beta_t) \O_F(\x_t),\\
&\x_{t+1}  = \x_t - \eta\s_t \circ \v_{t+1}, t=0, \ldots, T,
\end{aligned}
\end{equation}
where $\x_t$ denotes the model parameter and $\O_F(\x_t)$ denotes an unbiased stochastic gradient estimator, $\beta_t$ is known as the momentum parameter of the first-order moment, $s_t$ denotes an appropriate possibly coordinate-wise adaptive step size scaling factor and $\eta$ is the standard learning rate parameter.  

One criticism of Adam is that it might not converge for some problems with some momentum parameters following its original analysis. In particular, the authors of AMSGrad~\citep{reddi2019convergence} show that Adam with small momentum parameters can diverge for some problems. However, we notice that the failure of Adam shown in \citep{reddi2019convergence} and the practical success of Adam come from an inconsistent setting of the momentum parameter for the first-order moment. In practice, this momentum parameter (i.e., $\beta_t$ in (\ref{eqn:adam})) is usually set to a large value (e.g., 0.9) close to its limit value $1$.  However, in the failure case analysis of Adam~\citep{reddi2019convergence,kingma2014adam} and existing (unsuccessful) analysis of Adam~\citep{DBLP:conf/iclr/ChenLSH19} and its variants~\citep{luo2018adaptive,DBLP:conf/nips/ZaheerRSKK18,naichenrmsprop,savarese2019convergence}, such momentum parameter is set as a small value or a decreasing sequence. 

To address the gap between theory and practice of adaptive algorithms, we provide a generic convergence analysis with an increasing or large momentum parameter for the first-order moment and \textbf{a bounded} second order coordinate step size $s_t$.Our analysis is simple and intuitive. It only requires a bounded assumption on the gradient which usually holds or can be easily enforced in adaptive algorithms. By employing an increasing/large momentum parameter, we place more emphasis on historical gradients during the later stages of optimization, ensuring a stable convergence process. At the early iterations the information in the current stochastic gradient is more valuable, hence using a relatively small $\beta$ is helpful for improving the convergence speed. 
As the algorithm approaches the stationary point, the differences between successive models decrease, making older gradients more relevant for current updates. Either utilizing past stochastic gradients and or relying solely on current stochastic gradients introduce inaccuracies. Given the assumption of bounded variance (e.g., Assumption \ref{ass:1}: $\E[\|\O_F(\x)- \nabla F(\x)\|^2]\leq \sigma^2(1 + c\|\nabla F(\x)\|^2)$), the error in the current stochastic gradient remains at a constant level (i.e., $\sigma^2$). This is why we assign less weight to the current stochastic gradients and assign more weight to the momentum term, which becomes a progressively more accurate estimator as the algorithm progresses.
Our analysis covers a family of Adam family algorithms such as AMSGrad, Adabound, AdaFom, etc. A surprising yet natural result is that when adaptive scaling factors of the step size are bounded, the Adam family with an increasing or large enough momentum parameter for the first-order moment indeed converges at the same rate as SGD.
In our analysis, $1 - \beta_t$ is decreasing or as small as the step size in the order, that is, an increasing or sufficiently large momentum parameter $\beta_t$. This increasing (or large) momentum parameter is more natural than the decreasing (or small) momentum parameter, which is indeed the reason that makes Adam diverge on some examples~\citep{reddi2019convergence}. The increasing/large momentum parameter $\beta_t$ is also consistent with the large value close to 1 used in practice and suggested in~\citep{kingma2014adam} for practical purposes.

A key in the analysis is to carefully leverage the design of stochastic estimator of the gradient, i.e., $\v_{t+1}$. Simply using unbiased stochastic gradients for model updates tends to result in convergence issues in various problems, particularly in (non-convex) min-max scenarios \citep{chavdarova2019reducing,hsieh2020explore}. One would usually need to employ impractically large mini-batch sizes to mitigate this in non-convex min-max problems \citep{lin2020gradient,boct2023alternating}. This phenomenon largely stems from their enhanced sensitivity to the variance of the unbiased stochastic gradients, as opposed to more straightforward minimization problems. Recent studies in stochastic non-convex optimization have proposed better stochastic estimators of the gradient based on variance reduction technique (e.g., SPIDER, SARAH, STORM)~\citep{fang2018spider,wang2019spiderboost,pham2020proxsarah,cutkosky2019momentum}. However, these estimators sacrifice generality as they require that the unbiased stochastic oracle is Lipschitz continuous with respect to the input, which prohibits many useful tricks in machine learning for improving {generalization} and {efficiency} (e.g., adding random noise to the stochastic gradient~\citep{DBLP:journals/corr/NeelakantanVLSK15},  gradient compression~\citep{NIPS2017_6c340f25,pmlr-v70-zhang17e,10.5555/3326943.3327063}). In addition, they also require computing stochastic gradients at two points per-iteration, making them further restrictive.  Instead, we directly analyze the stochastic estimator based on moving average (SEMA), i.e., the first equation in~(\ref{eqn:adam}).  We prove that for non-convex problems, averaged variance of the stochastic estimator $\mathbf{v}_{t+1}$ decreases over time, i.e., $\mathbb{E}\left[\frac{1}{T+1}\sum_{t=0}^T\|\mathbf{v}_t - F(\mathbf{x}_t)\|^2\right] \leq O(1/\sqrt{T})$. Thus it provides gradient estimators accurate enough to ensure that in expectation, the uniformly sampled output of adaptive algorithms with moving average estimator converge to a stationary point.   

This variance-reduction property of adaptive algorithms with moving average estimator is also helpful for us to design new algorithms and improve analysis for other non-convex optimization problems, e.g., compositional optimization, non-convex min-max optimization, and non-convex bilevel optimization. As an interesting yet non-trivial use case, we consider  non-convex strongly-concave  min-max optimization (or concave but satisfying a dual-side Polyak-Lojasiewicz [PL] condition).  We propose primal-dual stochastic momentum and adaptive methods based on the SEMA estimator {\it without requiring  a large mini-batch size and a Lipschitz continuous stochastic oracle}, and establish the state-of-the-art complexity, i.e., $O(\kappa^3/\epsilon^4)$ for finding an $\epsilon$-stationary solution, where $\kappa$ is a condition number.  This work establishes the convergence of primal-dual stochastic momentum and adaptive methods  with various kinds of adaptive step sizes for updating the primal variable.  
In addition, our result addresses a gap in the literature of non-convex strongly-concave min-max optimization~\citep{lin2020gradient,yan2020sharp}, which either requires a large mini-batch size or a double-loop for achieving the same complexity.

We present a comparison of our theoretical results with existing literature in Table~\ref{tab:0}. 

\begin{table*}[t] 
	\caption{Comparison with previous results. ``mom. para." is short for momentum parameter. ``-" denotes no strict requirements and applicable to a range of updates. $\uparrow$ represents increasing as iterations and $\downarrow$ represents decreasing as iterations. $\epsilon$ denotes the target accuracy level for the objective gradient norm, i.e., $\E[\|\nabla F(\x)\|^2]\leq \epsilon^2$.  
}\label{tab:0} 
	\centering
	\label{tab:1} 
	\scalebox{0.59}{\begin{tabular}{lllccc} 
			\toprule 
			\multirow{2}{*}{Problem} & \multirow{2}{*}{Method}	& \multirow{2}{*}{batch size} 
   & $\uparrow$ or $\downarrow$ & $\uparrow$ or $\downarrow$ &  \multirow{2}{*}{Converge?} \\
   &&& 1st mom. para. & 2nd mom. para. \\ 
			\hline  
			&This work&$O(1)$&$\uparrow$&-&Yes \\
			&\citep{kingma2014adam}&$O(1)$&$\downarrow$&constant&No\\
	Non-convex		&\citep{DBLP:conf/iclr/ChenLSH19}&$O(1)$&Non-$\uparrow$&-&No\\
	(Adam-family)	&\citep{DBLP:conf/nips/ZaheerRSKK18}&$O(1/\epsilon^2)$&constant&$\uparrow$&Yes\\
			&\citep{DBLP:conf/cvpr/ZouSJZL19}&$O(1)$&constant&$\uparrow$&Yes\\
			&\citep{2020arXiv200302395D}&$O(1)$&constant&$\uparrow$&Yes\\
			\midrule
			\hline 
		Problem & Method	&batch size&\# of loops&\# of samples for (\ref{eqn:mm})&Oracle\\
		\hline 
			&This work&$O(1)$&Single&$O(1/\epsilon^4)$&General \\
		Non-Convex	&SGDA~\citep{lin2020gradient}&$O(1/\epsilon^2)$&Single&$O(1/\epsilon^4)$&General\\
		Strongly-Concave	&SGDMax~\citep{lin2020gradient}&$O(1)$&Double&$O(1/\epsilon^8)$&General\\
		MinMax	&Epoch-SGDA~\citep{yan2020sharp}&$O(1)$&Double&$O(1/\epsilon^4)$&General\\
			&AccMDA~\citep{DBLP:journals/corr/abs-2008-08170}&$O(1)$&Single&$O(1/\epsilon^3)$&Lipschitz\\
		\bottomrule 
	\end{tabular}}
	\vspace*{-0.15in} 
\end{table*}

\section{Related Work}
We notice that the literature on stochastic non-convex optimization is huge and we cannot discuss all of them in this section.  We will focus on methods requiring only a general unbiased stochastic oracle model, before which we summarize some related lines of work and interested readers can refer to them and references therein. 
The optimization of non-convex minimization problems have extensively been studied in the literature, such as in \citep{nesterov2012make,ghadimi2013stochastic,ghadimi2016accelerated,allen2018make,davis2019stochastic}.
Convex-concave min-max optimization also has drawn tremendous attention  \citep{juditsky2011solving,nemirovski2009robust,yan2020sharp}.
Deterministic min-max optimization has been discussed in \citep{lin2020near,xu2020unified,zhang2021complexity,chen2022accelerated,bot2022fast}.
Readers who are interest in the setting of finite-sum min-max optimization can see \citep{zhang2021complexity,chen2022accelerated}.
In particular, there is a line of research on variational inequalities based analysis that can be applied to min-max problems \citep{juditsky2011solving,bot2022fast,chavdarova2019reducing,hsieh2020explore,zhang2022sapd+}.

\textbf{Stochastic Adaptive Methods.}
\label{sec:relatedwork}
Stochastic adaptive methods originating from AdaGrad for convex minimization~\citep{duchi2011adaptive,mcmahan2010adaptive} have attracted tremendous attention for stochastic non-convex optimization~\citep{ada18bottou,ada18orabona,adagradmom18,tieleman2012lecture,chen2018closing,luo2018adaptive,huang2021super}. 

Several recent works have tried to prove the (non)-convergence of Adam. In particular, \citet{DBLP:conf/cvpr/ZouSJZL19} establish some sufficient condition for ensuring Adam family to converge. In particular, they choose to increase the momentum parameter for the second-order moment and establish a convergence rate in the order of $O(\log(T)/\sqrt{T})$, which was similarly established in \cite{2020arXiv200302395D} with some improvement on the constant factor. \citet{DBLP:conf/nips/ZaheerRSKK18} show that Adam with a sufficiently large mini-batch size can converge to an accuracy level proportional to the inverse of the mini-batch size. \citet{DBLP:conf/iclr/ChenLSH19} analyze the convergence properties for a family of Adam family algorithms. However, their analysis requires a strong assumption of the updates to ensure the convergence, which does not necessarily hold as the authors give non-convergence examples.  Different from these works, we give an alternative way to ensure Adam \textbf{converges} by  using an increasing or large momentum parameter for the first-order moment without requiring large mini-batch.
Moreover, we do not enforce any requirement on the second order momentum parameter $\beta'$ other than $\beta'\in [0,1]$, in contrast with some existing works enforcing $\beta < \sqrt{\beta'} <1$ \citep{DBLP:conf/cvpr/ZouSJZL19,chen2021cada}, $\beta < \beta' <1$ \citep{2020arXiv200302395D}, or $\beta'$ is extremely close to 1 \citep{zhang2022adam}, where $\beta$ and $\beta'$ are first-order momentum and second-order momentum parameters, respectively.

Indeed, our analysis is applicable to a family of adaptive algorithms, and is agnostic to the method for updating the normalization factor in the adaptive step size as long as it can be upper bounded.  The large momentum parameter for the first-order moment is also the key part that differentiates our convergence analysis with existing non-convergence analysis of Adam~\citep{DBLP:conf/iclr/ChenLSH19,reddi2019convergence}, which require  the  momentum parameter for the first-order moment to be decreasing  to zero or sufficiently small. After our manuscript appeared on arXiv, there is another work that gives a convergence analysis using large first order momentum \citep{zhang2022adam}. However, their analysis cannot guarantee the convergence to an $\epsilon$-stationary point unless the second-order momentum is extremely close to 1. 

\textbf{Stochastic Non-Convex Min-Max Problems.} Stochastic non-convex concave min-max optimization has been studied in several recent works.  \citet{rafique2018non} establishes the first results for these problems. In particular, their algorithms suffer from an oracle complexity of $O(1/\epsilon^6)$ for finding a nearly $\epsilon$ stationary point of the primal objective function, and an oracle complexity of $O(1/\epsilon^4)$ when the objective function is strongly concave in terms of the dual variable and has a certain special structure. The same order oracle  complexity  of $O(1/\epsilon^4)$ is achieved in~\citep{yan2020sharp} for weakly-convex strongly-concave problems without a special structure of the objective function. However, these algorithms use a double-loop based on the proximal point method.  \citet{lin2020gradient} analyzes a single-loop stochastic gradient descent ascent  (SGDA) method for smooth non-convex concave min-max problems. They have established  the same order of oracle complexity  $O(1/\epsilon^4)$ for smooth non-convex strongly-concave problems but with a large mini-batch size. Recently, \citet{2020arXiv200713605I} extends the analysis to stochastic alternating (proximal) gradient descent ascent method but suffering from the same issue of requiring a large mini-batch size. 
In contrast, our methods enjoy the same order of oracle complexity without using a large mini-batch size. 
\citet{qiu2020single} have also obtained a complexity of $O(1/\epsilon^4)$ by utilizing the moving average estimator but as we will see later they suffer from a much worse dependency on the condition number. 
We note that an improved complexity of $O(1/\epsilon^3)$ was achieved in several recent works under the Lipschitz continuous oracle model~\citep{luo2020stochastic,DBLP:journals/corr/abs-2008-08170,DBLP:conf/nips/Tran-DinhLN20}, which is non-comparable to our work that only requires a general unbiased stochastic oracle.  Recently, several studies~\citep{nouiehed2019solving,liu2018fast,yang2020global,guo2020fast} propose stochastic algorithms for non-convex min-max problems by leveraging stronger conditions of the problem (e.g., PL condition). 
After the first appearance of our work, \cite{yang2022faster,zhang2022sapd+} have achieved better dependence on condition number than ours for the non-convex-strongly-concave problem, but they either use a two-staged algorithm or a double-loop algorithm, while our analysis allows a single-loop algorithm. More details will be discussed later.
\citet{yan2022adaptive} has studied adaptive min-max problem under convex setting. \citep{dou2021one} has proposed and analyzed an extra gradient AMSGrad method for non-convex-non-concave optimization.  
However, the analysis of primal-dual stochastic momentum and primal-dual adaptive methods for solving non-convex-strongly-concave min-max optimization problems remain rare, which is presented in this work.

Finally, we also note that the $O(1/\epsilon^4)$ oracle complexity is optimal for stochastic non-convex optimization under a general unbaised stochastic oracle model, according to Theorem 1 of ~\citep{arjevani2019lower}, which implies that our results are optimal up to a logarithmic factor.

\section{Notations and Preliminaries}
{\bf Notations and Definitions.} Let $\|\cdot\|$ denote the Euclidean norm of a vector or the spectral norm of a matrix.   Let $\|\cdot\|_F$ denote the Frobenius norm of a  matrix. A mapping $h$ is $L$-Lipschitz continuous iff $\|h(\x) - h(\x')\|\leq L\|\x - \x'\|$ for any $\x, \x' \in \R^d$.  A function $F$ is called $L$-smooth if its gradient $\nabla F(\cdot)$ is $L$-Lipschitz continuous. A function $g$ is $\lambda$-strongly convex iff $g(\x)\geq g(\x') + \nabla g(\x')^{\top}(\x  - \x') + \frac{\lambda}{2}\| \x - \x'\|^2$ for any $\x, \x'$.   A function $g(\y)$ is called $\lambda$-strongly concave if $-g(\y)$ is $\lambda$-strongly convex.  For a differentiable function $f(\x, \y)$, we let $\nabla_x f(\x, \y)$ and $\nabla_y f(\x, \y)$ denote the partial gradients with respect to $\x$ and $\y$, respectively. Denote by $\nabla f(\x, \y) = (\nabla_x f(\x, \y)^{\top}, \nabla_y f(\x, \y)^{\top})^{\top}$. 

In the following sections, we will focus on two families of non-convex optimization problems, namely  non-convex minimization~(\ref{eqn:snc}),  non-convex  min-max optimization problem~(\ref{eqn:mm}). 
These optimization problems have broad applications in machine learning. {\bf This paper focuses on theoretical analysis and our goal} for these problems is to find an $\epsilon$-stationary solution of the primal objective function $F(\x)$ by using stochastic oracles.
\begin{definition}
For a differentiable function $F(\x)$,  a randomized solution $\x$ is called an $\epsilon$-stationary point if it satisfies $\E[\|\nabla F(\x)\|^2] \leq \epsilon^2$. 
\end{definition} 
For deriving faster rates, we also consider the PL condition.
\begin{definition}
$F(\x)$ is said to satisfy $\mu$-PL condition for some constant $\mu>0$ if it holds that $\|\nabla F(\x)\|^2 \geq 2\mu (F(\x) - \min_{\x'} F(\x'))$.  
\end{definition}  

For non-convex min-max problems, our analysis covers two cases: non-convex strongly concave min-max optimization~\citep{rafique2018non}, and non-convex non-concave optimization with dual-side PL condition~\citep{yang2020global}. The dual-side PL condition is given below. 
\begin{definition}
$F(\x, \y)$ satisfies the dual-side $\mu$-PL condition, i.e., $\forall\x$, $\|\nabla_y F(\x, \y)\|^2 \geq 2\mu (\max_{\y}F(\x, \y)-F(\x, \y))$.  
\end{definition}


Depending on the problem's structure, we require different stochastic oracles that will be described for each problem later. 

Before ending this section, we present a closely related stochastic momentum method for solving non-convex minimization problem $\min_{\x\in\R^d} F(\x)$ through an unbiased stochastic oracle that returns a random variable $\O_F(\x)$ for any $\x$ such that $\E[\O_F(\x)] = \nabla F(\x)$. For solving this problem, the stochastic momentum method (in particular stochastic heavy-ball (SHB) method) that employs the SEMA update is given by 
\begin{equation}\label{eqn:sma}
 \begin{split}
&\v_{t+1} = (1-\gamma)\v_t + \gamma\O_F(\x_t)\\
&\x_{t+1} = \x_t - \eta \v_{t+1}, \quad t= 0, \ldots, T.
\end{split} 
\end{equation}
where $\v_0 = \O_F(\x_0)$. In the literature,  $\beta=1 - \gamma$ is known as the momentum parameter and $\eta$ is known as the step size or learning rate. Note that the stochastic momentum method can be also written as $\z_{t+1} = \gamma\z_t - \eta\O_F(\x_t)$, and $\x_{t+1} = \x_t + \z_{t+1}$~\citep{yangnonconvexmo}, which is equivalent to the above update with some parameter change shown in Appendix~\ref{appendix:SMM}. The above method has been analyzed in various studies~\citep{ghadimi2020single, NEURIPS2020_d3f5d4de,yu_linear,yangnonconvexmo}. Nevertheless, we will give a unified analysis for the adaptive methods with moving average estimator by a much more concise proof, which covers SHB as a special case. A core to the analysis is the use of a known variance recursion property of the SEMA estimator stated below. 
\begin{lemma}({\bf Variance Recursion of SEMA})[Lemma 2, \citep{wang2017stochastic}]
Consider a moving average sequence $\z_{t+1} = (1-\gamma_t)\z_t + \gamma_t \O_h(\x_t)$ for tracking $h(\x_t)$, where $\E_t[\O_h(\x_t)] = h(\x_t)$ and $h$ is a $L$-Lipschitz continuous mapping. Then we  have
\begin{equation}
\begin{split}
&\E_t[\|\z_{t+1} - h(\x_t)\|^2]\leq (1-\gamma_t)\|\z_t - h(\x_{t-1})\|^2 \\ 
&+2\gamma_t^2\E_t[\|\O_h(\x_t) - h(\x_t)\|^2]  + \frac{L^2\|\x_{t} - \x_{t-1}\|^2}{\gamma_t}, 
\end{split}
\end{equation}
where $\E_t$ denotes the expectation conditioned on all randomness before $\O_h(\x_t)$.  
\label{lem:sema_mengdi}
\end{lemma}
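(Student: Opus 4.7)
The plan is to derive the recursion by expanding the SEMA update, using unbiasedness of $\O_h(\x_t)$ to kill the noise cross terms under $\E_t$, and then splitting a remaining sum of two $\mathcal{F}_t$-measurable vectors via a weighted Young's inequality whose parameter is tuned to $\gamma_t$. First I would subtract $h(\x_t)$ from both sides of $\z_{t+1}=(1-\gamma_t)\z_t+\gamma_t\O_h(\x_t)$ and insert and subtract $h(\x_{t-1})$ to obtain the algebraic identity
\[
\z_{t+1}-h(\x_t)=(1-\gamma_t)\bigl(\z_t-h(\x_{t-1})\bigr)+(1-\gamma_t)\bigl(h(\x_{t-1})-h(\x_t)\bigr)+\gamma_t\bigl(\O_h(\x_t)-h(\x_t)\bigr).
\]

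Next I would take $\E_t\|\cdot\|^2$ of this identity. The first two summands depend only on $\x_{t-1},\x_t,\z_t$, so they are constants under $\E_t$, while the third has zero conditional mean by $\E_t[\O_h(\x_t)]=h(\x_t)$. Hence all cross terms that pair the third summand with the first two vanish, yielding
\[
\E_t\|\z_{t+1}-h(\x_t)\|^2=\bigl\|(1-\gamma_t)(\z_t-h(\x_{t-1}))+(1-\gamma_t)(h(\x_{t-1})-h(\x_t))\bigr\|^2+\gamma_t^2\,\E_t\|\O_h(\x_t)-h(\x_t)\|^2.
\]

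To the first (deterministic) term I would apply Young's inequality $\|a+b\|^2\le(1+\alpha)\|a\|^2+(1+\alpha^{-1})\|b\|^2$ with the calibrated choice $\alpha=\gamma_t/(1-\gamma_t)$. This is engineered precisely so that $(1-\gamma_t)^2(1+\alpha)$ collapses to $1-\gamma_t$, producing the desired contraction factor in front of $\|\z_t-h(\x_{t-1})\|^2$, while the companion coefficient $(1-\gamma_t)^2(1+\alpha^{-1})=(1-\gamma_t)^2/\gamma_t\le 1/\gamma_t$ multiplies $\|h(\x_{t-1})-h(\x_t)\|^2$. Invoking the $L$-Lipschitz continuity of $h$ to replace the latter by $L^2\|\x_t-\x_{t-1}\|^2$ then assembles the three stated terms; the factor $2$ in front of $\gamma_t^2\,\E_t\|\O_h(\x_t)-h(\x_t)\|^2$ is a convenient weakening of the $\gamma_t^2$ that the argument above yields exactly.

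The main obstacle, and really the only nontrivial step, is the calibration of the Young parameter. A naive $\|a+b\|^2\le 2\|a\|^2+2\|b\|^2$ would produce a contraction factor of $2(1-\gamma_t)^2$, strictly exceeding $1-\gamma_t$ whenever $\gamma_t<1/2$, which would destroy the variance-reduction effect once the recursion is unrolled; conversely, any weight biased too heavily toward $\|a\|^2$ would blow up the coefficient on $\|\x_t-\x_{t-1}\|^2$ beyond $1/\gamma_t$. The specific choice $\alpha=\gamma_t/(1-\gamma_t)$ is the unique balance that matches both target coefficients simultaneously; everything else in the proof is a single unbiasedness decomposition plus one Lipschitz substitution.
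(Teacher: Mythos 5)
Your proof is correct, and in fact establishes a constant $\gamma_t^2$ on the oracle-noise term rather than the $2\gamma_t^2$ in the statement, so it proves a strictly tighter inequality. The paper itself does not prove this lemma---it cites it as Lemma~2 of \citep{wang2017stochastic}---so there is no in-paper proof to compare against; your derivation is the standard one used for such moving-average variance recursions: isolate the martingale-difference increment so that cross terms vanish under $\E_t$, then apply Young's inequality with $\alpha = \gamma_t/(1-\gamma_t)$ to the two $\mathcal{F}_t$-measurable pieces and use Lipschitzness of $h$ to control the drift. One small rhetorical overstatement: the choice $\alpha=\gamma_t/(1-\gamma_t)$ is what makes the contraction coefficient $(1-\gamma_t)^2(1+\alpha)$ hit $1-\gamma_t$ exactly; the other coefficient $(1-\gamma_t)^2/\gamma_t$ is then merely bounded above by $1/\gamma_t$ rather than matched, so it is not literally a simultaneous match of both targets, but this does not affect the validity of the argument.
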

Let's explore in more detail why large or increasing momentum is required. By rearranging the terms of equation (\ref{lem:sema_mengdi}) of Lemma 1, we can derive the following expression:
\begin{equation}
\begin{split} 
\E_{t-1}[\|\z_t - h(\x_{t-1})\|^2 \leq& \frac{\E_{t-1}[\|\z_t - h(\x_{t-1})\|^2 - \E_t[\|\z_{t+1} - h(\x_t)\|^2]}{\gamma_t}
\\  
&+2\gamma_t\underbrace{\E_t[\|\O_h(\x_t) - h(\x_t)\|^2]}\limits_{(V)}  + \frac{L^2\|\x_{t} - \x_{t-1}\|^2}{\gamma_t^2}.  
\end{split} 
\label{eq:lemma1_re}
\end{equation} 
The first term on the right-hand side (RHS) of (\ref{eq:lemma1_re}) can be managed using the telescoping sum technique when $\gamma_t$ is constant across all $t$, and requires a slightly different telescoping sum when $\gamma_t$ varies with $t$. It is also important to note that the third term on the RHS, although positive, reduces as $\x_t$ approaches a stationary point. Specifically, suppose that $\x_t$ is updated by $\x_t = \x_{t-1} - \eta_t \v_t$, with $\v_t$ being a moving average estimator itself. In this case, with $\eta_t=O(\gamma_t)$ we have 
\begin{equation}
\begin{split}
&\frac{L^2\|\x_{t} - \x_{t-1}\|^2}{\gamma_t^2} = \frac{L^2\|\x_{t} - \x_{t-1}\|^2}{\gamma_t^2} = \frac{L^2\eta_t^2\|\v_{t}\|^2}{\gamma_t^2} \\
&= O(\|\v_t - \nabla F(\x_{t-1})\|^2 + \|\nabla F(\x_{t-1})\|^2), 
\end{split} 
\end{equation}
where both $\|\v_t - \nabla F(\x_{t-1})\|^2$ and $\|\v_t - \nabla F(\x_{t-1})\|^2$ are diminishing as the algorithm progresses. 
Next we turn to the second term on the RHS of (\ref{eq:lemma1_re}). Given the assumption of bounded variance (e.g., Assumption 1: $\E[\|\O_F(\x)- \nabla F(\x)\|^2]\leq \sigma^2(1 + c\|\nabla F(\x)\|^2)$), the error in the current stochastic gradient remains at least a constant level (i.e., $\sigma^2$). Then, we can see that to obtain an accurate estimator $\z$, i.e., $\E_t[\|\z_t - h(\x_{t-1})\|^2 \leq \epsilon^2$, $\gamma_t$ has to be at a small level of $\epsilon^2$ or decrease to $\epsilon^2$ over time to control the error introduced by the unbiaed stochastic estimator, i.e., the (V) term. This requirement underlines the rationale for using a large or increasing momentum $\beta_t$, where $\beta_t=1-\gamma_t$. 
We refer to the property in Lemma \ref{lem:sema_mengdi} as variance recursion (VR) of the SEMA.


\section{Novel Analysis of Adaptive Methods for Non-Convex Minimization}
\label{sec:adamin}
In this section, we consider the standard stochastic non-convex minimization, i.e., 
\begin{align}\label{eqn:snc}
\min_{\x\in\R^d} F(\x),
\end{align} 
where $F$ is smooth and is accessible only through an unbiased stochastic oracle. These conditions are summarized below for our presentation. 

\begin{assumption}\label{ass:1}Regarding problem~(\ref{eqn:snc}), the following conditions hold: 
\begin{itemize}
\item $\nabla F$ is $L_F$ Lipschitz continuous.
\item $F$  is accessible only through an unbiased stochastic oracle that returns a random variable $\O_F(\x)$ for any $\x$ such that $\E[\O_F(\x)] = \nabla F(\x)$, and $\O_F$ has a variance bounded by $\E[\|\O_F(\x)- \nabla F(\x)\|^2]\leq \sigma^2(1 + c\|\nabla F(\x)\|^2)$ for some $c>0$.
\item There exists $\x_0$ such that  $ F(\x_0) - F_*\leq \Delta_F$ where $F_* = \min\limits_{\x} F(\x)$ and $\Delta_F>0$.
\end{itemize} 
\end{assumption}
\vspace*{-0.1in}
{\bf Remark:} Note that the variance bounded condition is slightly weaker than the standard condition $\E[\|\O_F(\x)- \nabla F(\x)\|^2]\leq \sigma^2$. An example of a random oracle that satisfies our condition but not the standard condition is $\O_F(\x) =d\cdot \nabla F(\x)\circ\e_i$, where $i\in\{1,\ldots, d\}$ is randomly sampled and  $\e_i$ denotes the $i$-th canonical vector with only $i$-th element equal to one and others zero.  For this oracle, we can see that $\E[\O_F(\x)] = \nabla F(\x)$ and $\E[\|\O_F(\x) - \nabla F(\x)\|^2]\leq (d-1)\|\nabla F(\x)\|^2$. 

\setlength{\textfloatsep}{3pt}

\begin{table*}[t] 
	\caption{Different adaptive methods and their satisfactions of Assumption~\ref{ass:2}}
	\label{tab:2} 
	\centering
	\scalebox{0.7}{\begin{tabular}{lccc}
			\toprule
		method&update for $h_t$&Additional assumption&$c_l$ and $c_u$ \\
		\midrule
				SHB &$\u_{t+1} = 1,G_0=0$ & - & $c_l =1, c_u =1$ \\
\midrule
		Adam& $\u_{t+1} = \beta_t'\u_t + (1-\beta_t') \O_F^2(\x_t)$&$\|\O_F\|\infty\leq G$&$c_l = \frac{1}{G+G_0}, c_u = \frac{1}{G_0}$\\
		\midrule
		\multirow{2}{*}{AMSGrad}& $\u'_{t+1} =\beta_t' \u'_t + (1-\beta_t') \O_F^2(\x_t)$ &\multirow{2}{*}{$\|\O_F\|\infty\leq G$} & \multirow{2}{*}{$c_l = \frac{1}{G+G_0}, c_u = \frac{1}{G_0}$}\\
  & $\u_{t+1} = \max(\u_t, \u'_{t+1})$ \\
		\midrule
		AdaFom& \multirow{2}{*}{$\u_{t+1} = \frac{1}{t+1}\sum_{i=0}^t\O^2_F(\x_i)$} &\multirow{2}{*}{$\|\O_F\|\infty\leq G$}&\multirow{2}{*}{$c_l = \frac{1}{G+G_0}, c_u = \frac{1}{G_0}$}\\ 
        (AdaGrad) \\ 
		\midrule
        Adam$^+$& $\u_{t+1} = \|\v_{t+1}\|$& $\|\O_F\|\leq G$ & $c_l = \frac{1}{\sqrt{G}+G_0}, c_u = \frac{1}{G_0}$\\
        \midrule
        \multirow{2}{*}{AdaBound}& $\u'_{t+1} =\beta_t' \u'_t + (1-\beta_t') \O_F^2(\x_t)$& \multirow{2}{*}{-} & \multirow{2}{*}{ $c_l = c_l, c_u =c_u$}\\
        & $\u_{t} = \Pi_{[1/c_u^2, 1/c_l^2]}[\u'_{t+1}],\quad G_0 =0$ \\
		\bottomrule
	\end{tabular}}
\end{table*}

We will analyze a family of adaptive algorithms, whose updates are shown in Algorithm~\ref{alg:adam}. 
A key to our convergence analysis of adaptive methods is the boundness of the  step size scaling factor $\s_t = 1/(\sqrt{\u_{t+1}} + G_0)$, where $G_0$ is a constant to increase stability. We present the boundness of $\s_t$ as an assumption below for more generality. We denote by $\teta_t = \eta_t \s_t$. 
 
 \begin{algorithm}[t]
	\caption{
	Adaptive Stochastic Algorithms (ASA)}\label{alg:adam}
	\begin{algorithmic}[1]
\State {Input: $\x_0, \v_0, \eta_0, \beta_0, T$} 
\FOR {$t=0, 1, ..., T$}
\State ~~~~$\v_{t+1} = \beta_t\v_t +  (1-\beta_t)\O_F(\x_t)$
\State ~~~~{$\u_{t+1} = h_t (\O_F(\x_0), \ldots, \O_F(\x_t))$} \hfill $\diamond h_t\textcolor{teal}{\geq0}$ can be implemented as in Table~\ref{tab:2} 
\State ~~~~$\x_{t+1} = \x_{t} - \eta_t\frac{\v_{t+1}}{\sqrt{\u_{t+1}} + G_0}$
\ENDFOR
\State {\textbf{return} $\x_\tau, \v_{\tau}$ where $\tau$ is uniformly sampled from $0, ..., T$.} 
\end{algorithmic}
\end{algorithm}
 \begin{assumption}\label{ass:2}
 For the adaptive algorithms as shown in Algorithm~\ref{alg:adam}, we assume that $\s_t =  1/(\sqrt{\u_{t+1}} + G_0)$ is upper bounded and lower bounded, i.e., there exists $0<c_l<c_u$ such that $\forall i, t$, $c_l\leq \|\s_{t,i}\|\leq c_u$, where $\s_{t,i}$ denotes the $i$-th element of $\s_t$. 
 \end{assumption}
 {\bf Remark:} Under the standard assumption $\|\O_F(\x)\|_\infty\leq G$~\citep{kingma2014adam,reddi2019convergence}, we can see many adaptive algorithms can satisfy the above condition.  Examples include Adam~\citep{kingma2014adam}, AMSGrad~\citep{reddi2019convergence}, AdaFom~\citep{DBLP:conf/iclr/ChenLSH19}, Adam$^+$~\citep{DBLP:journals/corr/abs-2011-11985},  whose $\u_t$ shown in Table~\ref{tab:2} all satisfy the above condition under the bounded stochastic oracle assumption. 
Even if the condition $\|\O_F(\x)\|_\infty\leq G$ is not satisfied, we can use clipping to make $\u_t$ bounded. This is used in AdaBound~\citep{luo2018adaptive}, whose $\u_t$ is given by 
\begin{equation} 
\begin{split}
\text{AdaBound: }& \u'_{t+1} = \beta_t'\u'_t +  (1-\beta_t') \O_F^2(\x_t),\\
&\u_{t} = \Pi_{[1/c_u^2, 1/c_l^2]}[\u_{t+1}],\quad G_0 =0
\end{split}
\label{eq:adabound_update}
\end{equation}
where $c_l\leq c_u$ and $\Pi_{[a, b]}$ is a projection operator that projects each element of the input into the range $[a, b]$. 
It's important to note that (\ref{eq:adabound_update}) aligns with Assumption \ref{ass:2}, and therefore, fits within the scope of our analysis.  
We summarize various updates and their satisfactions of Assumption~\ref{ass:2} in Table~\ref{tab:2}. 
It is notable that the convergence analysis of AdaBound in \citep{luo2018adaptive} has some issues. As pointed out by \citep{savarese2019convergence}, \citep{luo2018adaptive} actually needs the step size for each coordinate to be non-increasing, which usually does not hold in adaptive algorithms.
\citep{luo2018adaptive} gives a convergence analysis for AdaBound but still based on a restrictive condition, i.e., $\frac{t}{\eta_l(t)} - \frac{t-1}{\eta_u(t)} \leq M$ where $M>0$ is a constant and $\eta_l(t), \eta_u(t)$ are the lower and upper bounds for step sizes at iteration $t$, respetively.
Note that SHB also satisfies Assumption \ref{ass:2} automatically.

To prove the convergence of the update~(\ref{eqn:adam}). We first present a key lemma. 
\begin{lemma}\label{lem:3}
Suppose Assumption \ref{ass:2} holds. For $\x_{t+1} = \x_t- \teta_t \circ \v_{t+1}$ with $\teta_t = \eta_t \circ \s_t$ and $\eta_t L_F\leq c_l/(2c_u^2)$, we have
\begin{equation} 
\begin{split}
F(\x_{t+1})  \leq& F(\x_t) +   \frac{ \eta_t c_u}{2}\|\nabla F(\x_t) - \v_{t+1}\|^2 - \frac{\eta_t c_l}{2}\|\nabla F(\x_t)\|^2  
- \frac{\eta_t c_l}{4}\|\v_{t+1}\|^2. 
\end{split}
\end{equation}
\end{lemma}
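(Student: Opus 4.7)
\textbf{Proof proposal for Lemma \ref{lem:3}.} The plan is a standard descent argument, done coordinatewise so that the componentwise bounds on $\teta_t$ can be exploited. First I would invoke $L_F$-smoothness of $F$ to write
\begin{equation*}
F(\x_{t+1}) \leq F(\x_t) + \nabla F(\x_t)^\top(\x_{t+1}-\x_t) + \frac{L_F}{2}\|\x_{t+1}-\x_t\|^2,
\end{equation*}
and substitute the update $\x_{t+1}-\x_t = -\teta_t\circ\v_{t+1}$ to obtain a linear term $-\sum_i \teta_{t,i}[\nabla F(\x_t)]_i [\v_{t+1}]_i$ and a quadratic term $\frac{L_F}{2}\sum_i \teta_{t,i}^2 [\v_{t+1}]_i^2$.

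The next step, which is the crux of the argument, is to split the linear term using the elementary identity $-ab = \tfrac{1}{2}((a-b)^2 - a^2 - b^2)$ applied componentwise. This yields
\begin{equation*}
-\sum_i \teta_{t,i}[\nabla F(\x_t)]_i [\v_{t+1}]_i = \tfrac{1}{2}\sum_i \teta_{t,i}\bigl([\nabla F(\x_t)]_i - [\v_{t+1}]_i\bigr)^2 - \tfrac{1}{2}\sum_i \teta_{t,i}[\nabla F(\x_t)]_i^2 - \tfrac{1}{2}\sum_i \teta_{t,i}[\v_{t+1}]_i^2.
\end{equation*}
Then I would apply the upper bound $\teta_{t,i}\leq \eta_t c_u$ to the first (nonnegative) sum and the lower bound $\teta_{t,i}\geq \eta_t c_l$ to the two remaining (subtracted) sums, producing $\tfrac{\eta_t c_u}{2}\|\nabla F(\x_t)-\v_{t+1}\|^2$, $-\tfrac{\eta_t c_l}{2}\|\nabla F(\x_t)\|^2$, and $-\tfrac{\eta_t c_l}{2}\|\v_{t+1}\|^2$ respectively.

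For the quadratic smoothness term I would use $\teta_{t,i}^2 \leq \eta_t^2 c_u^2$ to get $\tfrac{L_F \eta_t^2 c_u^2}{2}\|\v_{t+1}\|^2$, and then invoke the step size hypothesis $\eta_t L_F \leq c_l/(2c_u^2)$ to bound this by $\tfrac{\eta_t c_l}{4}\|\v_{t+1}\|^2$. Adding this to the $-\tfrac{\eta_t c_l}{2}\|\v_{t+1}\|^2$ contribution from the linear term leaves $-\tfrac{\eta_t c_l}{4}\|\v_{t+1}\|^2$, which combined with the other pieces yields exactly the claimed inequality.

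I do not anticipate a genuine obstacle here; the only thing to be careful about is the coordinatewise bookkeeping, since $\teta_t$ is a vector rather than a scalar, so the $ab$-decomposition must be done inside the sum before applying the two-sided bounds with the correct sign on each term. Once that is handled, the step size restriction $\eta_t L_F \leq c_l/(2c_u^2)$ is precisely calibrated to absorb the smoothness quadratic into half of the $-\tfrac{\eta_t c_l}{2}\|\v_{t+1}\|^2$ reserve.
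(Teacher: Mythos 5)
Your proposal is correct and follows essentially the same route as the paper: the paper writes the crucial decomposition as $-\nabla F(\x_t)^{\top}(\teta_t\circ \v_{t+1}) = \frac{1}{2}\|\sqrt{\teta_t}\circ(\nabla F(\x_t) - \v_{t+1})\|^2 - \frac{1}{2}\|\sqrt{\teta_t}\circ\nabla F(\x_t)\|^2 - \frac{1}{2}\|\sqrt{\teta_t}\circ\v_{t+1}\|^2$, which is exactly your coordinatewise identity $-ab=\tfrac12((a-b)^2-a^2-b^2)$ weighted by $\teta_{t,i}$, and it then applies the same two-sided bounds on $\teta_{t,i}$ and the same use of $\eta_t L_F\leq c_l/(2c_u^2)$ to absorb the smoothness quadratic. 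No gap.
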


Based on the Lemma~\ref{lem:sema_mengdi} and Lemma \ref{lem:3}, we can easily establish the following convergence of adaptive methods.
\begin{theorem}\label{thm:2}
Let $\Delta_t = \|\v_{t+1} - \nabla F(\x_t)\|^2$. Suppose Assumptions~\ref{ass:1} and~\ref{ass:2} hold. With $1-\beta_t=\gamma=O(\frac{1}{\sqrt{T}})$,  $\eta_t=\eta = O( \frac{1}{\sqrt{T}})$, 
we have 
\begin{align*}
&\E\left[\frac{1}{T+1}\sum_{t=0}^T \|\nabla F(\x_t)\|^2\right]\leq O(\frac{1}{\sqrt{T}}), &\E\left[\frac{1}{T+1}\sum_{t=0}^T\Delta_t\right]\leq O(\frac{1}{\sqrt{T}}). 
\end{align*}
\end{theorem} 
{\bf Remark:} The second inequality above means the average variance of the SEMA sequence $\v_{t+1}$ is diminishing as  $T\rightarrow \infty$. We can see that the adaptive methods enjoy an oracle complexity of $T = O(1/\epsilon^4)$ for finding an $\epsilon$-stationary solution. 

One can also use a decreasing step size $\eta_t\propto 1/\sqrt{t}$ and increasing $\beta_t$ such that $1-\beta_t =1/\sqrt{t}$ (i.e, increasing momentum parameter) and establish a rate of $\widetilde O(1/\sqrt{T})$ as stated below.

\begin{theorem}\label{thm:SHB_decrease}
Suppose Assumption~\ref{ass:1} and \ref{ass:2} hold.   With 
$1-\beta_t = \gamma_t=\frac{c_l}{8\sigma^2 c c_u \sqrt{t+1}}$, 
and $\eta_t = \min\{\frac{\gamma_t \sqrt{c_l}}{2 L_F \sqrt{c_u^3}}, \frac{1}{2L_F c_u}\}$ 
we have  
\begin{align*}  
&\E\left[\frac{1}{T+1}\sum_{t=0}^T \|\nabla F(\x_t)\|^2\right]\leq O\left(\frac{1}{\sqrt{T}} +  \frac{\ln T}{\sqrt{T}} \right), \\
&\E\left[\frac{1}{T+1}\sum_{t=0}^T\Delta_t\right]\leq O\left(\frac{1}{\sqrt{T}} +  \frac{\ln T}{\sqrt{T}} \right).  
\end{align*}
\end{theorem}
\textbf{Remark. } It takes $\widetilde{O}(1/\epsilon^4)$ to converge to an $\epsilon$-stationary point.


\paragraph{An Improved Rate under PL condition.} 
\begin{algorithm}[t]
	\caption{Double Loop  Adaptive Algorithms}\label{alg:adam_PL}
	\begin{algorithmic}[1]
\State{Input: $\x_0, \v_0 = \O_F(\x_0), \eta_0, \beta_0, T_0$}   
\FOR{$k=0, 1, ..., K$} 
\State{~~~~$\x_{k+1}, \v_{k+1}$ = ASA($\x_k, \v_k, \eta_k, \beta_k, T_k$)}
\ENDFOR 
\State{\textbf{return} $\x_{K+1}$} 
\end{algorithmic}
\end{algorithm}

For problems satisfying PL condition, we develop a double loop algorithm (Algorithm \ref{alg:adam_PL}) where the step size is decayed exponentially after each stage, and establish an improved rate below. 
\begin{theorem}
\label{thm:min_PL} 
Suppose Assumption~\ref{ass:1} holds and $F(\x)$ satisfies  $\mu$-PL condition. Let $\Delta_t = \|\v_{t+1} - \nabla F(\x_t)\|^2$,  $\epsilon_0 = \max\{\Delta_F, \Delta_0\}$ and $\epsilon_k = \epsilon_0/2^k$.  
With $1-\beta_k = \gamma_k \leq \frac{\mu c_l \epsilon_k}{24 c_u \sigma^2}$, $\eta_k = \min\{\frac{\gamma_k \sqrt{c_l}}{2L_F \sqrt{c_u^3}}, \frac{1}{\sqrt{2}L_f c_u}\}$ and $T_k = \max\{\frac{48 c_u}{\mu\gamma_k c_l}, \frac{1}{6\mu\eta_k c_l}\}$, after $K=\log (\epsilon_0/\epsilon)$ stages, it holds that
\begin{equation}
\begin{split}
&\E[F(\x_{K+1}) - F_*] \leq \epsilon,\quad
\E[\Delta_K] \leq \epsilon.
\end{split}
\end{equation}
\end{theorem}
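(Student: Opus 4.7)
The plan is to prove Theorem~\ref{thm:min_PL} by induction on the stage index $k$, using Theorem~\ref{thm:2} as a per-stage black box and invoking the PL condition to upgrade gradient-norm control into function-value control. Concretely, I would introduce a geometrically decaying target $\epsilon_k := \epsilon_0/2^k$ and maintain the joint inductive invariant that at the start of stage $k$,
\[
\E[F(\x_k) - F_*]\leq \epsilon_k,\qquad \E\|\v_k - \nabla F(\x_{k-1})\|^2 \leq \epsilon_k,
\]
(the SEMA bound being interpreted naturally at $k=0$, where $\v_0 = \O_F(\x_0)$ and $\E\|\O_F(\x_0)-\nabla F(\x_0)\|^2$ is absorbed into the constant $\Delta_0$). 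The base case is immediate from $\epsilon_0 = \max\{\Delta_F,\Delta_0\}$; since $K = \lceil \log_2(\epsilon_0/\epsilon)\rceil$, closing the induction delivers $\epsilon_K\leq\epsilon$ and yields the theorem.

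For the inductive step I would invoke Theorem~\ref{thm:2} on the $k$-th call to ASA with a per-stage target $\epsilon^2 \asymp \mu\epsilon_{k+1}$. Theorem~\ref{thm:2} would then furnish
\[
\E\|\nabla F(\x_{k+1})\|^2\leq \epsilon^2,\qquad \E\|\v_{k+1} - \nabla F(\x_k)\|^2\leq 2\epsilon^2,
\]
after using the uniform sampling of $\tau$ to transfer the time averages of Theorem~\ref{thm:2} to a single output iterate. Applying the PL condition to the first bound gives $\E[F(\x_{k+1})-F_*] \leq \epsilon^2/(2\mu)\leq \epsilon_{k+1}$, while the second directly furnishes the SEMA half of the invariant at stage $k+1$. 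It remains only to check that the stated parameters $\gamma_k = \frac{\mu c_l\epsilon_k}{24 c_u\sigma^2}$, $\eta_k$, and $T_k$ fulfill the hypotheses of Theorem~\ref{thm:2} for this target: substituting $\epsilon_k = 2\epsilon_{k+1}$ into the constraint $\gamma \leq \frac{\epsilon^2 c_l}{12\sigma^2 c_u}$ matches $\gamma_k$ exactly, and plugging the inductive bounds $\Delta_0^{(k)}\leq \epsilon_k$ and $F(\x_k)-F_*\leq \epsilon_k$ into
\[
T \geq \max\!\Bigl\{\tfrac{6\Delta_0^{(k)} c_u}{\gamma_k\epsilon^2 c_l},\ \tfrac{12(F(\x_k)-F_*)}{\eta_k\epsilon^2 c_l}\Bigr\}
\]
reduces, after simplification, to the stated $T_k = \max\{\tfrac{48 c_u}{\mu\gamma_k c_l},\ \tfrac{1}{6\mu\eta_k c_l}\}$.

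The main obstacle is coupling the two invariants. The per-stage target $\epsilon^2$ must simultaneously satisfy $\epsilon^2/(2\mu)\leq \epsilon_{k+1}$ (for the primal half, via PL) and $2\epsilon^2\leq \epsilon_{k+1}$ (for the SEMA half), so the target is forced to scale with $\mu\epsilon_{k+1}$ rather than $\epsilon_{k+1}$, which is precisely what produces the explicit $1/\mu$ factors appearing in $\gamma_k$ and $T_k$. A secondary subtlety is that the output of ASA is the randomly sampled pair $(\x_\tau,\v_\tau)$, so the bound on $\E\|\v_{k+1}-\nabla F(\x_{k+1})\|^2$ must hold at a common index; this falls out because Theorem~\ref{thm:2} bounds the \emph{entire} time average $\E[\tfrac{1}{T+1}\sum_t \Delta_t]$, so uniform sampling preserves the bound in expectation. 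Once both invariants propagate, telescoping through $K=\log_2(\epsilon_0/\epsilon)$ stages yields the claimed $\epsilon$-level bounds on $F(\x_{K+1})-F_*$ and on the terminal SEMA variance.
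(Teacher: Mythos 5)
Your proposal follows essentially the same route as the paper: induction with a geometrically decaying target $\epsilon_k = \epsilon_0/2^k$, invoking the Theorem~\ref{thm:2} machinery within each stage, and applying the PL condition to upgrade the gradient-norm bound into a function-gap bound, with the $\mu$-scaled per-stage target ($\epsilon^2\asymp\mu\epsilon_{k+1}$) being exactly what produces the $1/\mu$ factors in $\gamma_k,\eta_k,T_k$. The one small indexing discrepancy is in the SEMA half of the invariant: you track $\E\|\v_k - \nabla F(\x_{k-1})\|^2\leq\epsilon_k$, whereas the paper (and what is actually needed to bound $\Delta_0$ at the start of the next ASA call, since $\v_1^{(k)} = \beta_0\v_k + (1-\beta_0)\O_F(\x_k)$ and $\x_0^{(k)} = \x_k$) tracks $\E\|\v_k - \nabla F(\x_k)\|^2\leq\epsilon_k$; both versions are glossing over the fact that ASA returns $(\x_\tau,\v_\tau)$ while Theorem~\ref{thm:2} controls $\E\|\v_{\tau+1}-\nabla F(\x_\tau)\|^2$, so this is a cosmetic issue rather than a divergence in the argument.
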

\textbf{Remark.} The total number of iterations is $\widetilde{O}(\frac{1}{\mu^2 \epsilon})$, which matches the state-of-the-art complexity for PL problems. \cite{chen2021cada} also considers leveraging PL condition in Adam type algorithm in distributed setting, but they require extra computation and storage to enhance the variance reduction.

\section{Adaptive Algorithms for Non-Convex Strongly-Concave Min-Max Optimization} 
In this section, we consider stochastic non-convex  min-max optimization: 
\begin{align}\label{eqn:mm}
\min_{\x\in\R^d} \max_{\y\in \Y}f(\x, \y),
\end{align}
and define
\begin{equation}
F(x) :=  \max_{\y\in \Y}f(\x, \y). 
\end{equation} 
We make the following assumption regarding this problem. 
\begin{assumption}\label{ass:3}Regarding the problem~(\ref{eqn:mm}), the following conditions hold: 
\begin{itemize}[leftmargin=*]
\item $F$ is $L_F$-smooth, $\nabla f(\x, \y)$ is $L_f$-Lipschitz continuous. 
\item   $f$  is accessible only through an unbiased stochastic oracle that returns a random tuple $(\O_{f,x}(\x, \y), \O_{f,y}(\x, \y))$ for any $(\x, \y)$ such that $\E[\O_{f,x}(\x, \y)] = \nabla_x f(\x, \y)$ and  $\E[\O_{f, y}(\x, \y)] = \nabla_y f(\x, \y)$, and $\O_{f,x/y}$ have  variance bounded by $\E[\|\O_{f,x}(\x, \y)- \nabla_x f(\x, \y)\|^2]\leq \sigma^2$ and $\E[\|\O_{f,y}(\x, \y)- \nabla_y f(\x, \y)\|^2]\leq \sigma^2$.
\item $\Y\subseteq\R^{d'}$ is a bounded or unbounded convex set and $f(\x, \cdot)$ is $\lambda$-strongly  concave for any $\x$.  Or $\Y=\R^{d
'}$ and $f(\x, \cdot)$ is concave and satisfies the dual-side $\lambda$-PL condition. 
\item There exists $\x_0$ such that  $ F(\x_0) - \min\limits_{\x} F(\x)\leq \Delta_F$. 
\end{itemize}
\end{assumption}
\vspace{-0.4cm}For solving the above problem, we propose primal-dual stochastic  momentum (PDSM) and adaptive (PDAda) methods and present them in a unified framework in Algorithm~\ref{alg:pdadam}, where $h_t$ can be implemented by the updates in Table~\ref{tab:2} and $h_t=0, G_0=1$ for PDSM. Note that  we only use the adaptive updates for updating $\x_{t+1}$ but not $\y_{t+1}$. This makes sense for many machine learning applications (e.g., AUC maximization~\citep{liu2018fast}, distributionally robust optimization~\citep{rafique2018non}), where the dual variable does not involve similar gradient issues as the primal variable (e.g. different gradient magnitude for different coordinates) to enjoy the benefit of adaptive step size.  

\begin{algorithm}[t]
	\caption{Primal-Dual Stochastic Momentum (PDSM) method and Adaptive (PDAda) method}\label{alg:pdadam}
	\begin{algorithmic}[1]
\State{Input: $\x_0\in \R^d, \y_0 \in\R^{d'}, \u_0, \v_0$}   
\FOR{$t=0, 1, ..., T$}
\State {~~~~$\v_{t+1} = \beta\v_t  +  (1-\beta)\O_x(\x_{t}, \y_{t}) $}
\State {~~~~$\u_{t+1} = h_t(\{\O_x(\x_{j}, \y_{j}), j=0, \ldots, t\})$} \hfill $\diamond h_t \textcolor{teal}{\geq0}$ as in Table~\ref{tab:2}
\State~~~~$\x_{t+1} = \x_{t} - \eta_x\frac{\v_{t+1}}{\sqrt{\u_{t+1}} + G_0}$
\State ~~~~{$\y_{t+1} = \Pi_{\Y}[\y_t + \eta_y \O_y(\x_t, \y_t)]$}
\ENDFOR 
\State{\textbf{return} $\x_\tau, \y_{\tau}$ where $\tau$ is uniformly sampled from $0, ..., T$}
\end{algorithmic}
\end{algorithm}

For understanding the algorithm, let us consider the primal-dual stochastic momentum (PDSM) method, i.e.,  Algorithm~\ref{alg:pdadam} with $h_t(\cdot)=1, G_0=0$ whose updates are given by 
\vspace*{-0.15in}\begin{equation}\label{eqn:sgdama}
\text{PDSM:}\quad\left\{
\begin{aligned}
& \v_{t+1}  =\beta_x  \v_t + (1-\beta_x)\O_{f,x}(\x_t, \y_t),\\
&\x_{t+1} = \x_t - \eta_x \v_{t+1},  \\
&\y_{t+1} = \Pi_{\Y}[\y_t + \eta_y \O_y(\x_t, \y_t)].
\end{aligned}\right. 
\end{equation}
Hence, the difference from the standard SGDA \citep{lin2020gradient} is that we use the SEMA to track the gradient in terms of $\x$, i.e., $\u_{t+1}$. The dual variable is updated in the same way by stochastic gradient ascent.   

We can see that PDSM/PDAda is a single-loop algorithm which only requires an $O(1)$ batch size at each iteration. In contrast, (i)  SGDMax~\citep{lin2020gradient} and the proximal-point based  methods proposed in~\citep{rafique2018non,yan2020sharp} are double-loop algorithms requiring solving a subproblem at each iteration to a certain accuracy level; (ii) SGDMax and SGDA~\citep{lin2020gradient} require a large mini-batch size in the order of $O(1/\epsilon^2)$. 

 Denote by $\Delta_{x,t} = \|\v_{t+1}- \nabla_x f(\x_t, \y_t)\|^2$, and $\delta_{y,t}=\|\y_t - \y^*(\x_t)\|^2$, where $\y^*(\x) = \arg\max\limits_{\y' \in \mathcal{Y}} f(\x, \y')$.  The convergence of PDSM/PDAda is presented below.
\begin{theorem}\label{thm:PDAda}
Suppose  Assumption \ref{ass:2} and Assumption~\ref{ass:3} hold. By setting $1-\beta=\gamma = O(\frac{\sqrt{\kappa}}{\sqrt{T}})$, $\eta_x = O(\frac{1}{\kappa^{3/2}\sqrt{T}})$, and $ \eta_y = O(\frac{\sqrt{\kappa}}{\sqrt{T}})$,  
\begin{align*}
&\E\left[\frac{1}{T+1}\sum_{t=0}^T \|\nabla F(\x_t)\|^2\right]\leq O\left(\frac{\kappa^{3/2}}{\sqrt{T}}\right), \\
&\E\left[\frac{1}{T+1}\sum_{t=0}^T(\Delta_{x,t} + L_f^2\delta_{y,t})\right]\leq O\left(\frac{\kappa^{3/2}}{\sqrt{T}}\right). 
\end{align*} 
\end{theorem}

{\bf Remark:} It is obvious to see that the sample complexity of PDSM and PDAda is $O(1/\epsilon^4)$, matching the state-of-the-art complexity for solving non-convex strongly-concave min-max problems. But it is also notable that our result above is applicable to non-convex concave min-max problem that satisfies the dual-side PL condition.

%

\textbf{Discussions.} Before ending this section, we provide more discussions on the dependence of complexity on the condition number, i.e.,  $\kappa = L_f/\lambda$. 
Since $\eta_y = O(\epsilon^2/\kappa)$ and $\eta_x= O(\epsilon^2/\kappa^{3})$, the sample complexity of PDSM/PDAda is $O(\kappa^{3}/\epsilon^4)$.  In contrast, SGDMax and SGDA using a large mini-batch size in the order of $\kappa^2/\epsilon^2$ have a sample complexity of $O(\kappa^3/\epsilon^4)$.  We notice that \citep{qiu2020single} which also utilize the moving average estimator has a worse dependency on condition number of $O(\kappa^6/\epsilon^4)$. The AccMDA algorithm with an $O(1)$ batch size presented in~\citep{DBLP:journals/corr/abs-2008-08170} requiring a Lipschitz continuous oracle has the dependence on the condition number of $O(\kappa^{4.5})$. The double-loop algorithms (e.g., Epoch-SGDA) with an $O(1)$ batch size presented in~\citep{rafique2018non,yan2020sharp} have an even worse dependence on $\kappa$ when applied to our considered problem. The convergence in \citep{yan2020sharp}, which originally considers problems that are weakly convex but not necessarily smooth in $\x$, guarantees a sample complexity of $O(\kappa^4/\epsilon^4)$ in order to find a solution $\x$ such that $\E[L_F^2\|\x- \x^*\|^2] \leq \epsilon^2$,  where $\x_* = \arg\min_{\z} F(\z) + L_F\|\z - \x\|^2$. Note that in the worst case $L_F = O(\kappa)$~\citep{lin2020gradient}. In order to transfer this convergence to that on $\nabla F(\x)$, we can use $\|\nabla F(\x)\|^2\leq 2L_F^2\|\x - \x^*\|^2 + 2\|\nabla F(\x^*)\|^2$. Hence the complexity of Epoch-SGDA for guaranteeing $\E\|\nabla F(\x)\|^2\leq \epsilon^2$ is $O(\kappa^4/\epsilon^4)$. 
\cite{yang2022faster} might demonstrate a more advantageous dependence on the condition number $\kappa$. However, their algorithm has a two-stage
structure. Initially, they address a smoothed sub-problem, then transition, incurring a translative cost, to the solutions for the primary problem. While such a two-stage structured method might not always be preferable in practical applications, their total cost of $O(\kappa^2/\epsilon^4 + \kappa^5/\epsilon^2)$ could be worse than ours when $\kappa>O(\frac{1}{\epsilon})$. More recently, \cite{zhang2022sapd+} has achieved a complexity of $O(\kappa/\epsilon^4)$, which utilizes a double loop algorithm. 
Neither of these algorithms, including ours, have matched an known lower bound of $\kappa^{1/3}/\epsilon^4$ in \citep{li2021complexity}, thus we highlight that it is still an open problem to find a tighter lower bound and/or develop algorithms that can match the lower bound.

Finally, we would like to point out we can also derive an improved rate for a min-max problem under an $\mu$-PL condition of $F(\x)$. But the analysis is a mostly straightforward extension, and hence we omit it. 

\subsection{Sketch of Analysis}
We first need to prove the following lemmas.
\begin{lemma}\label{lem:005_1} 
Suppose Assumption~\ref{ass:2} holds. Considering the PDAda update, with $\eta L_F\leq c_l/(2c_u^2)$ we have
\begin{align*}
&F(\x_{t+1})  \leq F(\x_t) +   \frac{\eta_x c_u}{2} \|\nabla_x f(\x_t, \y^*(\x_t)) - \v_{t+1}\|^2
- \frac{\eta_x c_l}{2}\|\nabla F(\x_t)\|^2  - \frac{\eta_x c_l}{4}\|\v_{t+1}\|^2. 
\end{align*} 
\end{lemma}
\vspace*{-0.1in}This resembles that of Lemma~\ref{lem:3}. Next, we establish a recursion for bounding  $\|\nabla_x f(\x_t, \y^*(\x_t)) - \u_{t+1}\|^2$.

\begin{lemma}
In Algorithm \ref{alg:pdadam}, it holds that
\label{lem:var_x_1}
\begin{equation*}
\begin{split}
& \E\|\v_{t+1} - \nabla_x f(\x_t, \y^*(\x_t))\|^2 \leq (1-\frac{\gamma}{2}) \E\|\v_t - \nabla_x f(\x_{t-1}, \y^*(\x_{t-1}))\|^2\\
&~~~~~~~~ +4\gamma^2 \sigma^2 + 10\gamma L_f^2\E\|\y_t - \y^*(\x_t)\|^2 + \frac{2L_F^2}{\gamma} \E\|\x_t - \x_{t-1}\|^2.  
\end{split}
\end{equation*}
\end{lemma}
Next, we establish a recursion for bounding  $\E\|\y_t - \y^*(\x_t)\|^2 $. 
\begin{lemma}\label{lem:03_1}
Suppose Assumption \ref{ass:3} holds. With $\y_{t+1} =\Pi_{\Y}[\y_t + \eta_y \O_{fy}(\x_t, \y_t)]$, $\eta_y \leq \lambda$ and $\kappa=L_f/\lambda$, we have
\begin{equation*}
\begin{split} 
\E\|\y_{t+1} - \y^*(\x_{t+1})\|^2 & \leq (1-\frac{\eta_y \lambda}{2}) \E\|\y_t - \y^*(\x_t)\|^2 + 2\eta_y^2 \sigma^2 + \frac{4\kappa^2}{\eta_y \lambda} \|\x_t - \x_{t+1}\|^2. 
\end{split}
\end{equation*}
\end{lemma}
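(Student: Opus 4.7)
The plan is to apply the classical ``moving target'' decomposition: insert $\pm\,\y^*(\x_t)$ so that the iterate only has to track a frozen optimum, then pay separately for how fast that optimum drifts when $\x$ changes. Concretely, I would invoke Young's inequality $\|a+b\|^2\leq(1+\alpha)\|a\|^2+(1+1/\alpha)\|b\|^2$ with $a=\y_{t+1}-\y^*(\x_t)$ and $b=\y^*(\x_t)-\y^*(\x_{t+1})$, tuning $\alpha=\eta_y\lambda/4$ so that the ensuing geometric factor is preserved. This reduces the lemma to (i) a one-step projected stochastic ascent bound against the \emph{frozen} target $\y^*(\x_t)$, and (ii) a Lipschitz-drift bound on the optimal-response map.

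For (i), since $\y^*(\x_t)\in\Y$, non-expansiveness of $\Pi_\Y$ gives
\begin{equation*}
\|\y_{t+1}-\y^*(\x_t)\|^2\leq\|\y_t-\y^*(\x_t)\|^2+2\eta_y\langle \O_{fy}(\x_t,\y_t),\y_t-\y^*(\x_t)\rangle+\eta_y^2\|\O_{fy}(\x_t,\y_t)\|^2.
\end{equation*}
Taking $\E_t$, unbiasedness turns the cross term into $2\eta_y\langle\nabla_y f(\x_t,\y_t),\y_t-\y^*(\x_t)\rangle$; combining $\lambda$-strong concavity of $f(\x_t,\cdot)$ with the first-order optimality $\langle\nabla_y f(\x_t,\y^*(\x_t)),\y_t-\y^*(\x_t)\rangle\leq 0$ produces a $-2\eta_y\lambda\|\y_t-\y^*(\x_t)\|^2$ contribution. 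For the second-moment term I use $\E_t\|\O_{fy}\|^2\leq\|\nabla_y f(\x_t,\y_t)\|^2+\sigma^2\leq L_f^2\|\y_t-\y^*(\x_t)\|^2+\sigma^2$ via $L_f$-smoothness. Under the step-size condition (tight enough that $\eta_y L_f^2\leq\lambda$), the $L_f^2$ piece is absorbed into half the contraction, yielding $\E_t\|\y_{t+1}-\y^*(\x_t)\|^2\leq(1-\eta_y\lambda)\|\y_t-\y^*(\x_t)\|^2+\eta_y^2\sigma^2$. For (ii), the map $\y^*(\cdot)$ is $\kappa$-Lipschitz under $\lambda$-strong concavity and joint $L_f$-smoothness (standard: subtract the optimality conditions at $\y^*(\x_1)$ and $\y^*(\x_2)$, then apply strong monotonicity and the Lipschitz gradient), so $\|\y^*(\x_t)-\y^*(\x_{t+1})\|^2\leq\kappa^2\|\x_t-\x_{t+1}\|^2$.

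Plugging both pieces into the Young decomposition with $\alpha=\eta_y\lambda/4$ gives $(1+\alpha)(1-\eta_y\lambda)\leq 1-\eta_y\lambda/2$ (assuming $\eta_y\lambda\leq 1$), $(1+\alpha)\eta_y^2\sigma^2\leq 2\eta_y^2\sigma^2$, and $(1+1/\alpha)\kappa^2\leq 4\kappa^2/(\eta_y\lambda)$ after absorbing the trailing $\kappa^2$ into the dominant term; taking total expectation delivers the claim. The main delicate point is the one-step contraction: the bare hypothesis $\eta_y\leq\lambda$ does not by itself make $\eta_y^2 L_f^2$ absorbable into $\eta_y\lambda$, so one either relies implicitly on the tighter choice $\eta_y=O(\lambda/L_f^2)$ enforced later in Theorem~\ref{thm:PDAda}, or invokes the sharper co-coercive monotonicity $\langle\nabla_y f(\x_t,\y_t)-\nabla_y f(\x_t,\y^*(\x_t)),\y_t-\y^*(\x_t)\rangle\geq \tfrac{\lambda L_f}{\lambda+L_f}\|\y_t-\y^*(\x_t)\|^2+\tfrac{1}{\lambda+L_f}\|\nabla_y f(\x_t,\y_t)-\nabla_y f(\x_t,\y^*(\x_t))\|^2$, which permits $\eta_y\lesssim 1/L_f$. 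A secondary care-point is that the stated bound carries $\|\x_t-\x_{t+1}\|^2$ without an enveloping expectation, so the Young split must be performed \emph{before} any conditional expectation is applied, keeping the drift piece measurable with respect to the $(t{+}1)$-randomness rather than being averaged out.
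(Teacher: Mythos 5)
Your high-level plan mirrors the paper's: Young's inequality to separate the frozen-target error from the drift of the optimal-response map, a projected-gradient one-step contraction, and the $\kappa$-Lipschitzness of $\y^*(\cdot)$. But there is a concrete gap in your one-step contraction step.

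You bound the second-moment term by $\E_t\|\O_{fy}(\x_t,\y_t)\|^2\leq\|\nabla_y f(\x_t,\y_t)\|^2+\sigma^2\leq L_f^2\|\y_t-\y^*(\x_t)\|^2+\sigma^2$. The final inequality implicitly uses $\nabla_y f(\x_t,\y^*(\x_t))=0$, which is only true when $\y^*(\x_t)$ is an interior/unconstrained maximizer. Assumption~\ref{ass:3} explicitly allows $\Y$ to be a bounded convex set, in which case $\nabla_y f(\x_t,\y^*(\x_t))$ can be bounded away from zero and your bound breaks. The paper avoids this by \emph{not} comparing $\Pi_\Y[\y_t+\eta_y\O_{fy}]$ directly to the point $\y^*(\x_t)$. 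Instead it uses the fixed-point characterization $\y^*(\x_t)=\Pi_\Y[\y^*(\x_t)+\eta_y\nabla_y f(\x_t,\y^*(\x_t))]$ and applies non-expansiveness of $\Pi_\Y$ to the \emph{pair} of pre-projection points. This way the algebra only ever produces the difference $\nabla_y f(\x_t,\y_t)-\nabla_y f(\x_t,\y^*(\x_t))$, whose norm is controlled by $L_f\|\y_t-\y^*(\x_t)\|$ regardless of whether $\Y$ is bounded. Your decomposition is essentially what the paper itself does in the \emph{unconstrained} PL-condition variant (Lemma~\ref{lem:03_withoutSC}), where $\Y=\R^{d'}$ and $\nabla_y f(\x_t,\y^*(\x_t))=0$ legitimately holds; it just does not cover the bounded-$\Y$ branch of Assumption~\ref{ass:3}.

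Two smaller points. First, your Young parameter $\alpha=\eta_y\lambda/4$ gives $(1+1/\alpha)\kappa^2=\kappa^2+4\kappa^2/(\eta_y\lambda)$, which cannot be ``absorbed'' down to $4\kappa^2/(\eta_y\lambda)$; the paper's choice $\alpha=\eta_y\lambda/2$ gives $(1+2/(\eta_y\lambda))\kappa^2\leq 4\kappa^2/(\eta_y\lambda)$ using $\eta_y\lambda\leq 1$, which is what delivers the stated constant. Second, you are right that the printed hypothesis $\eta_y\leq\lambda$ is insufficient by itself; the paper's own proof uses $\eta_y\leq\lambda/L_f^2$, and Theorem~\ref{thm:PDAda} indeed enforces a step size of that order. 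Your observation about the missing expectation on $\|\x_t-\x_{t+1}\|^2$ in the lemma statement is also correct; the paper's displayed proof restores the $\E$.
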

By combining the above three lemmas, we can easily prove Theorem~\ref{thm:PDAda}.



\section{Experiments}
In this section, we show some experimental results to verify our theory. We consider both the minimization problem and the min-max problem. For the minimization problem, we use cross entropy (CE) as the loss function. For the min-max problem, we use the min-max formulated AUC loss \cite{liu2019stochastic} as the loss function. 
We conduct experiments on two types of data: 1) image data sets: Cifar10 and Cifar100 \citep{krizhevsky2009learning}, which has mutiple classes of images; 2) molecule data sets: BBBP and BACE \citep{wu2018moleculenet}, which involves a binary classification task to predict whether a molecule has a property or not, e.g., BBBP is short for blood-brain barrier penetration whose task is to predict whether a drug can penetrate the blood-brain barrier to arrive the targeted central nervous system or not. All experiments are conducted via Keras \citep{chollet2015keras} on Tensorflow framework \cite{tensorflow2015-whitepaper}.
\subsection{Minimization Problem}
In this subsection, we consider minimizing a standard CE loss. For the image data, i.e., Cifar10 and Cifar100, we use ResNet-50 as the network \cite{he2016deep}. For the molecule data, i.e., BBBP and BACE, we use a message-passing neural network (MPNN)  \citep{gilmer2017neural} implemented by the Keras team. 
We compare three optimization algorithms: the ``folklore'' Adam with first order momentum parameter $\beta$ fixed and $\eta$ fixed to a tuned value, the adaptive algorithm with an increasing first order momentum parameter $\beta$ and step size $\eta$ fixed to a tuned value, and the adaptive algorithm with both increasing first order momentum $\beta$ and decreasing step size $\eta$. The later two variants use the same second order moment as in Adam.  

For the folklore Adam, we fix the first order momentum to be $0.9$ as suggested in the \cite{kingma2014adam} and widely used in practice.
For the other two other variants, we tune initial $\gamma=1-\beta$ by tuning $\beta$  from $\{0.99, 0.9, 0.5\}$ and accordingly $\gamma$ from $\{0.01, 0.1, 0.5\}$. Then $\gamma$ (in the second variant)  is decayed by a factor of $0.2$ every 20 epochs with a total of 60 epochs. In the third variant, both $\gamma, \eta$ are decayed by a factor of $0.2$ every 20 epochs.  For all algorithms, the initial step size is tuned in $1e\text{-}5\sim 1e\text{-}2$ and $G_0$ is tuned in $\{1e\text{-}7, 1e\text{-}5, 1e\text{-}3\}$. In all experiments, we use a batch size of 32, and repeat the experiments 5 times and report the averaged results with standard deviation. 

\begin{figure}[htpb]
    \centering
    \subfigure[Results on Image Data]{
    \includegraphics[scale=0.23]{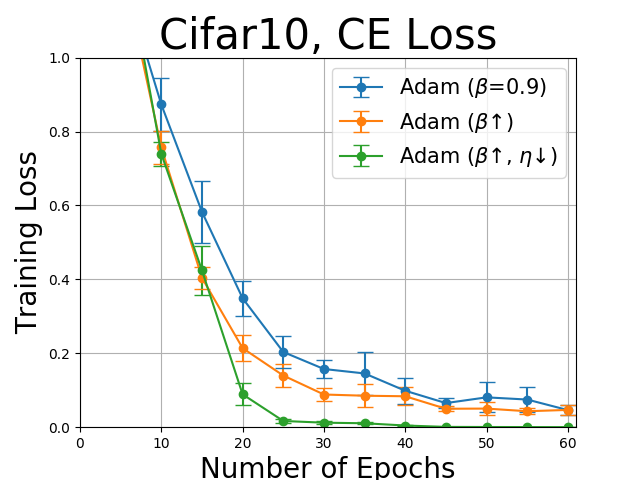}
    \includegraphics[scale=0.23]{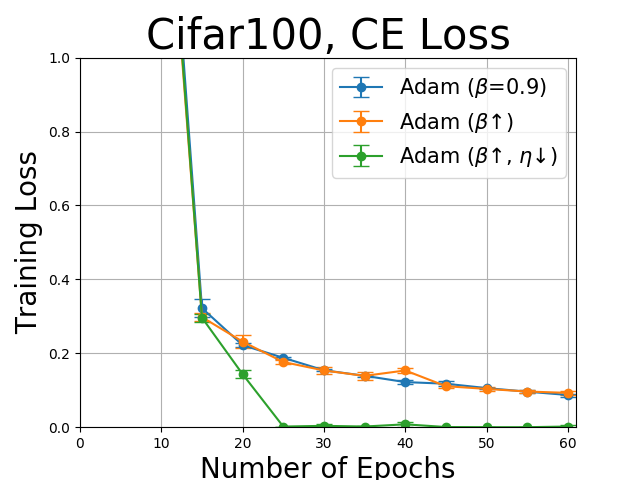}
    }
    \subfigure[Results on Molecule Data]{
    \includegraphics[scale=0.23]{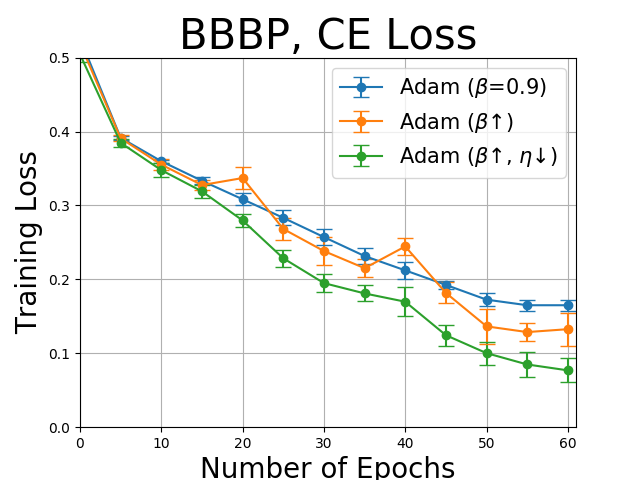}
    \includegraphics[scale=0.23]{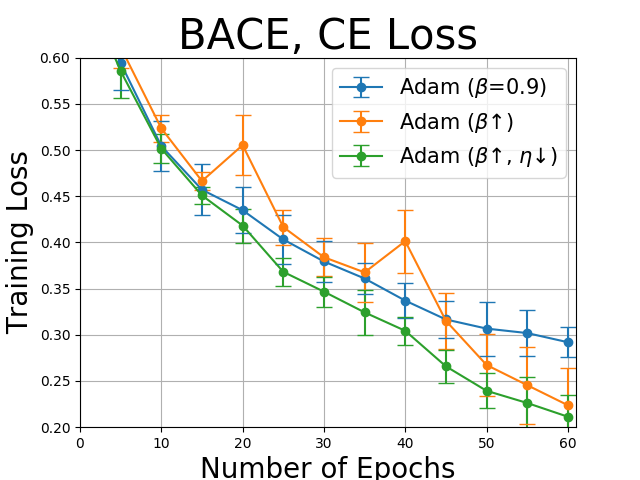}
    } 
    \caption{Minimizing the CE loss}
    \label{fig:min}
\end{figure}

From Figure \ref{fig:min}, we can see that in most cases decreasing $\beta$ especially together with decreasing $\eta$ can improve the convergence speed of the original Adam. This is reasonable because at the beginning the information in the current stochastic gradient is more valuable;  hence using a relatively small $\beta$ is helpful for improving the convergence speed. As the solution gets closer to the optimal solution, the variance of the current stochastic gradient will affect the convergence; hence increasing $\beta$ will help reduce the variance of the gradient estimator. 
What is more, decreasing step size can further accelerate the optimization, which is also consistent with observations in practice of non-adaptive optimization algorithms.

\subsection{Min-Max Problem}
In this subsection, we consider the AUC maximization task for binary classification tasks. Specifically, we optimize a min-max formulated AUC maximization problem \cite{ying2016stochastic,liu2019stochastic},
whose formulation is given in the Appendix \ref{sec:app_auc_formulation}.

The network we use for different data sets are the same as in the minimization problem. For Cifar10 and Cifar100, which has multiple classes, we merge half of their classes as the positive class and the others as the negative class. We compare our proposed algorithms PDSM and PDAda with baselines SGDA \citep{lin2020gradient}, Epoch-SGDA \cite{yan2020sharp}, PES-SGDA and PES-AdaGrad \citep{guo2020fast}. SGDA is a single loop algorithm that updates the primal and dual variable in turn using stochastic gradients. Epoch-SGDA, PES-SGDA and PES-AdaGrad are double algorithms that decay step sizes after a number of iterations, where Epoch-SGDA decays the step size polynomially while the other two decay step size exponentially. The Epoch-SGDA and PES-SGDA update variables using stochastic gradient while PES-AdaGrad uses an AdaGrad style update. The second order momentum of PDAda is implemented as Adam, shown in Table \ref{tab:2}. 

For all algorithms, we tune initial $\eta_x$ and $\eta_y$ in $1e\text{-}5\sim 1e\text{-}2$, and $G_0$ is tuned in $\{1e\text{-}7, 1e\text{-}5, 1e\text{-}3\}$. 
For PDSM and PDAda, we select the initial $\beta=1-\gamma$ from $\{0.99, 0.9, 0.5\}$.
For the algorithms other than SGDA, we decay the step size and $\gamma$ every $E$ epochs where $E$ is chosen from $\{5, 10, 15, 20, 25, 30\}$.
In Epoch-SGDA, the step sizes of the $k$-th stage are $\frac{\eta_x}{k+1}$ and $\frac{\eta_y}{k+1}$. PES-SGDA, PES-AdaGrad, PDSM and PDAda decays step size and $\gamma$ by a factor $e$ tuned in $\{0.1, 0.2, 0.5, 0.9\}$. 
Similar as before, we use a batch size of 32, and repeat the experiments 5 times and report the averaged results with standard deviation. 

The results in Figure \ref{fig:min_max} have demonstrated that in most cases our PDAda and PDSM can outperform the non-adaptive algorithms i.e., SGDA, Epoch-SGDA and PES-SGDA, which indicates that the moving average estimator is helpful for improving the convergence. Also, our PDAda can outperform PES-AdaGrad. Note that the main differences between PDAda and PES-AdaGrad are that PDAda uses SEMA to  estimate  the gradient while PES-AdaGrad simply uses stochastic gradient. 

\begin{figure} [htpb]
    \centering
    \subfigure[Results on Image Data]{
    \includegraphics[scale=0.23]{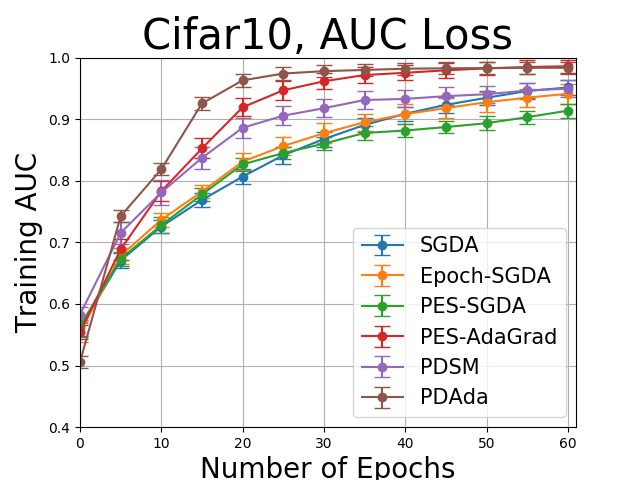}
    \includegraphics[scale=0.23]{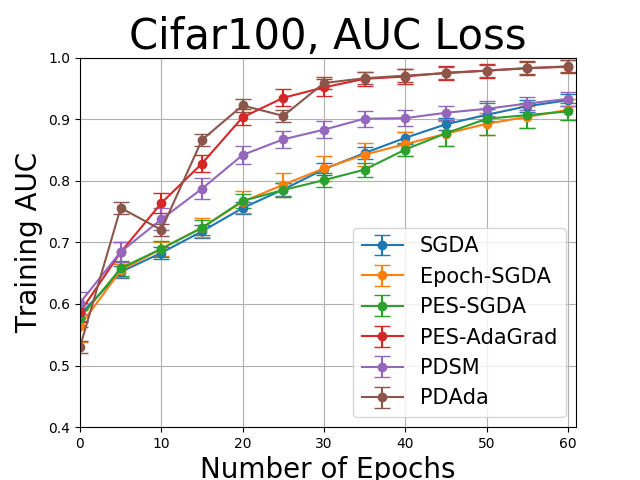}
    }
    \subfigure[Results on Molecule Data]{
    \includegraphics[scale=0.23]{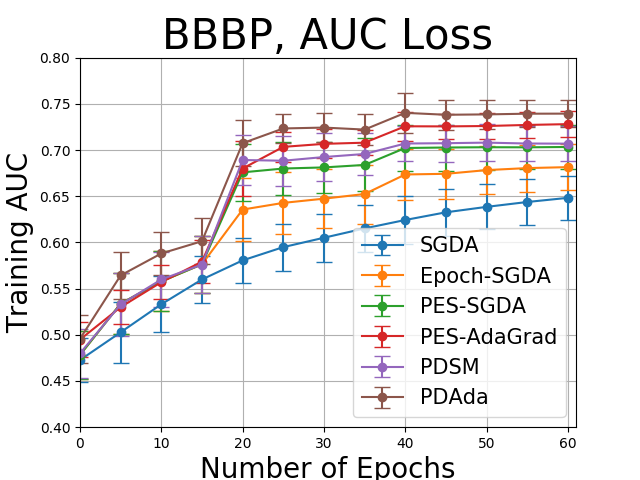}
    \includegraphics[scale=0.23]{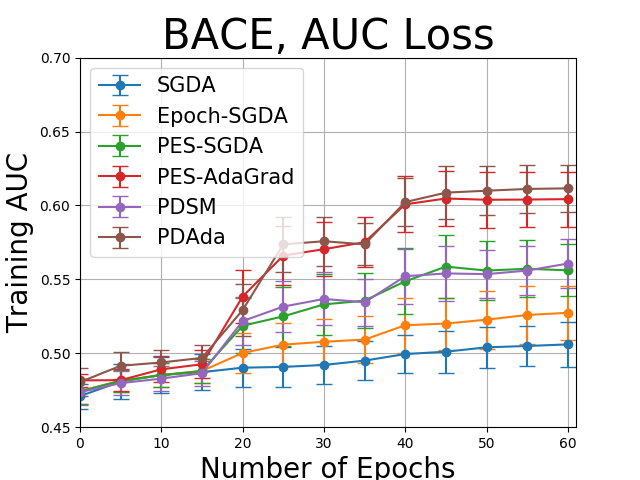}
    } 
    \caption{Optimizing the Min-Max AUC loss}
    \label{fig:min_max}
    
\end{figure}

\section{Conclusion}
In this paper, we have considered the application of stochastic moving average estimators in non-convex optimization and established some interesting and important results. Our results not only bring some new insights to make the Adaptive methods converge but also improve the state of the art results for stochastic non-convex strongly concave min-max optimization and stochastic bilevel optimization with a strongly convex lower-level problem.  The oracle complexities established in this paper are optimal up to a logarithmic factor under a general stochastic unbiased oracle model. 

\acks{We thank the reviewers for their feedback to improve the manuscripts. This work was partially supported National Science Foundation Career Award 1844403.}

\bibliography{reference} 
\onecolumn 
\appendix

\section{Stochastic Momentum Method}
\label{appendix:SMM}
In the literature~\cite{yangnonconvexmo}, the stochastic heavy-ball method is written as:
\begin{equation}\label{eqn:hb2}
\hspace*{-0.3in}\text{SHB:}\quad\left\{ \begin{aligned}
& \v'_{t+1} = \beta' \v'_{t} - \eta'\O_F(\x_t)\\
&\x_{t+1}  =  \x_t + \v'_{t+1}, \quad t= 0, \ldots, T. 
\end{aligned}\right.
\end{equation}
To show the resemblance between the above update and the one in~(\ref{eqn:sma}), we can transform them into one sequence update:
\begin{align*}
&(\ref{eqn:sma}):  \x_{t+1} = \x_t - \eta (1-\beta)\O_F(\x_t) +   \beta(\x_t - \x_{t-1})\\
& \text{SHB: } \x_{t+1} = \x_t - \eta'\O_F(\x_t) + \beta' (\x_t - \x_{t-1}).
\end{align*}
We can see that SHB is equivalent to~(\ref{eqn:sma}) with $\eta'= \eta(1-\beta)$ and $\beta' =   \beta$.

%

\section{Proof of Lemma~\ref{lem:3}}
\begin{proof}
Due to the smoothness of $F$, we can prove that under $\eta_t L_F\leq c_l/(2c_u^2)$
\begin{align*} 
&F(\x_{t+1}) \leq F(\x_t) + \nabla F(\x_t)^{\top} (\x_{t+1} - \x_t) + \frac{L_F}{2}\|\x_{t+1} - \x_t\|^2\\
&= F(\x_t) - \nabla F(\x_t)^{\top}(\teta_t\circ \v_{t+1}) + \frac{L_F}{2}\|\teta_t\circ\v_{t+1}\|^2\\ 
&= F(\x_t) +   \frac{1}{2}\|\sqrt{\teta_t}\circ(\nabla F(\x_t) - \v_{t+1})\|^2- \frac{1}{2}\|\sqrt{\teta_t}\circ\nabla F(\x_t)\|^2 \\
&~~~ + (\frac{L_F}{2} \|\teta_t\circ\v_{t+1}\|^2 - \frac{1}{2}\|\sqrt{\teta_t}\circ\v_{t+1}\|^2 ) \\
&  \leq F(\x_t) +   \frac{\eta_t c_u}{2}\|\nabla F(\x_t) - \v_{t+1}\|^2- \frac{\eta_t c_l}{2}\|\nabla F(\x_t)\|^2   +  \frac{\eta_t^2 c_u^2L_F - \eta_t c_l}{2}\|\v_{t+1}\|^2\\ 
&  \leq F(\x_t) +   \frac{\eta_t c_u}{2}\|\nabla F(\x_t) -  \v_{t+1}\|^2- \frac{\eta_t c_l}{2}\|\nabla F(\x_t)\|^2  -  \frac{\eta_t c_l}{4}\|\v_{t+1}\|^2. 
\end{align*}
\end{proof}

\section{Proof of Theorem~\ref{thm:2}}
\label{sec:app_proof_thm1}
\begin{proof}
 By applying Lemma~\ref{lem:sema_mengdi} to $\v_{t+1}$, we have
\begin{align*}
\E_t[\Delta_{t+1}]\leq (1-\gamma)\Delta_t+ 2\gamma^2\sigma^2(1+c\|\nabla F(\x_{t+1})\|^2)  + \frac{L_F^2\|\x_{t+1} - \x_{t}\|^2}{\gamma}.
\end{align*}
Hence we have, 
\begin{equation*}
\begin{split}
\E\left[\sum_{t=0}^T\Delta_t\right]\leq& \E\bigg[\sum_{t=0}^T\frac{\Delta_t - \Delta_{t+1}}{\gamma} + 2\gamma\sigma^2(T+1) + 2\gamma\sigma^2 c\sum_{t=0}^T\|\nabla F(\x_{t+1})\|^2 \\
& + \sum_{t=0}^T\frac{L_F^2\eta^2c_u^2\|\v_{t+1}\|^2}{\gamma^2}\bigg].
\end{split}
\end{equation*}

Note that $\gamma$ needs to be $O(1/\sqrt{T})$ such that $\frac{1}{T+1}\E\left[\sum_{t=0}^T\Delta_t\right]$ can eventually converge to $O(1/\sqrt{T})$, which is the fundamental reason why we need a large momentum $\beta_t$, where $\beta_t = 1-\gamma$.   

 Adding the above inequality with Lemma~\ref{lem:3}, we have
\begin{align*}
&\frac{\eta c_l}{2}\E\left[\sum_{t=0}^T\|\nabla F(\x_t)\|^2\right] \leq F(\x_0) - F_* - \frac{\eta c_l}{4}\sum_{t=0}^T\E\|\v_{t+1}\|^2\\
&  + \frac{\eta c_u}{2}\E\left[\sum_{t=0}^T\frac{\Delta_t - \Delta_{t+1}}{\gamma} + 2\gamma\sigma^2(T+1) + 2\gamma\sigma^2 c\sum_{t=0}^{T}\|\nabla F(\x_{t+1})\|^2+ \sum_{t=0}^T\frac{L_F^2\eta^2c_u^2\|\v_{t+1}\|^2}{\gamma^2}\right]\\ 
&\leq   F(\w_0) - F_* - \frac{\eta c_l}{4}\sum_{t=0}^T\E\|\v_{t+1}\|^2 
+  2\gamma\sigma^2 c\sum_{t=0}^{T}\E\|\nabla F(\x_{t}) - \nabla F(\x_{t}) + \nabla F(\x_{t+1})\|^2
 \\ 
& + \frac{\eta c_u}{2} \E\left[ \frac{\Delta_0 - \Delta_{T+1}}{\gamma} + 2\gamma\sigma^2(T+1) 
+ \sum_{t=0}^T\frac{L_F^2\eta^2c_u^2\|\v_{t+1}\|^2}{\gamma^2} \right] \\
&\leq   F(\w_0) - F_* - \frac{\eta c_l}{4}\sum_{t=0}^T\E\|\v_{t+1}\|^2 
+  2\eta c_u \gamma\sigma^2 c \sum_{t=0}^{T}\E\|\nabla F(\x_t)\|^2 \\
&+ 2\eta c_u \gamma\sigma^2 c L_F^2 \eta^2 c_u^2 \E\|\v_{t+1}\|^2
+ \frac{\eta c_u}{2} \E\left[ \frac{\Delta_0 - \Delta_{T+1}}{\gamma} + 2\gamma\sigma^2(T+1) 
+ \sum_{t=0}^T\frac{L_F^2\eta^2c_u^2\|\v_{t+1}\|^2}{\gamma^2} \right].
\end{align*}
Let $L_F^2\eta^2 c_u^3/(2\gamma^2) \leq  c_l/8$ (i.e., $\eta\leq \frac{\gamma\sqrt{c_l}}{2L_F\sqrt{c_u^3}})$ and $2c_u\gamma\sigma^2 c\leq c_l/4$, $2\gamma\sigma^2cL_F^2\eta^2c_u^3\leq c_l/8$ (i.e, $\eta \leq \frac{1}{\sqrt{2} L_F c_u}$), we have
\begin{equation*} 
\begin{split}
\frac{1}{T+1}\E\left[\sum_{t=0}^T\|\nabla F(\x_t)\|^2\right] &\leq \frac{\Delta_0c_u}{\gamma T c_l}  +  \frac{2(F(\x_0) - F_*)}{\eta c_lT} + 2\gamma\sigma^2\frac{c_u}{c_l}  +  \frac{1}{2}\frac{1}{T+1}\E\bigg[\sum_{t=0}^T\|\nabla F(\x_{t})\|^2\bigg].
\end{split}
\end{equation*}
As a result, 
\begin{align*}
\frac{1}{T+1}\E\left[\sum_{t=0}^T\|\nabla F(\x_t)\|^2\right] &\leq\frac{2\Delta_0c_u}{\gamma T c_l}  +  \frac{4(F(\x_0) - F_*)}{\eta c_lT} + 4\gamma\sigma^2\frac{c_u}{c_l}. 
\end{align*}

By setting $\gamma=O(\min(\frac{c_l}{8c_u\sigma^2 c}, \frac{1}{\sqrt{T}}))$, $\eta = O(\min(\frac{c_l}{2c_u^2 L_F}, \frac{1}{\sqrt{T}}))$, we have 
\begin{align*}
\frac{1}{T+1}\E\left[\sum_{t=0}^T\|\nabla F(\x_t)\|^2\right] &\leq O\left(\frac{1}{\sqrt{T}}\right).  
\end{align*}

Furthermore, 
we have
\begin{align*}
&\E\left[\sum_{t=0}^T\Delta_t\right]\leq \frac{\Delta_0}{\gamma} + \gamma\sigma^2(T+1) +\frac{c_l}{2c_u}\E\bigg[\sum_{t=0}^T\|\nabla F(\x_{t})\|^2\bigg] +  \E\bigg[\sum_{t=0}^T\frac{c_l}{2c_u}  \|\v_{t+1}\|^2\bigg]\\
& \leq \frac{\Delta_0}{\gamma} + 2\gamma\sigma^2(T+1) +\frac{1}{2}\E\bigg[\sum_{t=0}^T\|\nabla F(\x_{t})\|^2\bigg] +  \E\bigg[\sum_{t=0}^T\frac{1}{2}\Delta_t\bigg]
\end{align*}
As a result, 
\begin{align*}
&\E\left[\frac{1}{T+1}\sum_{t=0}^T\Delta_t\right]\leq O\left(\frac{1}{\sqrt{T}}\right).   
\end{align*}
\end{proof} 

\section{Poof of Theorem~\ref{thm:SHB_decrease}} 
\begin{proof}
By applying Lemma \ref{lem:sema_mengdi} to $\v_{t+1}$, we have \begin{equation}      
\begin{split}
\E_t[\Delta_{t+1}] \leq (1-\gamma_t) \Delta_t + 2\gamma_t^2 \sigma^2 (1+c\|\nabla F(\x_{t+1})\|^2) + \frac{L_F^2\|\x_{t+1}-\x_t\|^2}{\gamma_t}.
\end{split} 
\end{equation}
Hence we have
\begin{equation} 
\begin{split}
\E\left[ \sum\limits_{t=0}^T \gamma_t \Delta_t\right] \leq \E\left[\sum\limits_{t=0}^{T}[\Delta_t - \Delta_{t+1}] + \sum\limits_{t=0}^{T} 2\gamma_t^2\sigma^2(1+c\|\nabla F(\x_{t+1})\|^2) + \sum\limits_{t=0}^{T} \frac{L_F^2 \eta_t^2 c_u^2\|\v_{t+1}\|^2}{\gamma_t} \right]. 
\end{split}
\end{equation}
Note that by setting $\gamma_t=O(1/\sqrt{t+1})$, $\gamma_T$ goes to $O(1/\sqrt{T})$. Therefore $\frac{1}{T+1}\E\left[\sum_{t=0}^T\Delta_t\right]$ can eventually converge to $\widetilde{O}(1/\sqrt{T})$, which is the fundamental reason why we need a large (increasing) momentum $\beta_t$, where $\beta_t = 1-\gamma_t$.  
Combining this with Lemma~\ref{lem:3},  
\begin{equation}
\begin{split}
&\E\left[ \sum\limits_{t=0}^T \frac{\eta_t c_l}{2} \|\nabla F(\x_t)\|^2 \right] \leq \sum\limits_{t=0}^{T} \E[F(\x_t) - F(\x_{t+1})] - \sum\limits_{t=0}^{T} \frac{\eta_t c_l}{4} \E\|\v_{t+1}\|^2  \\ 
&  + \frac{\eta_1 c_u}{2 \gamma_1} \E\left[\sum\limits_{t=0}^T (\Delta_t - \Delta_{t+1}) + \sum\limits_{t=0}^{T} 2\gamma_t^2 \sigma^2 (1+c\|\nabla F(\x_{t+1})\|^2) + \sum\limits_{t=0}^{T} \frac{L_F^2 \eta_t^2 c_u^2 \|\v_{t+1}\|^2}{\gamma_t} \right] \Bigg] \\ 
&\leq F(\x_0) - F_* - \sum\limits_{t=0}^{T} \frac{\eta_t c_l}{4} \E\|\v_{t+1}\|^2 
+\frac{\eta_1 c_u \Delta_0}{2 \gamma_1} + \frac{\eta_1 c_u}{\gamma_1} \sum\limits_{t=0}^{T} \gamma_t^2 \sigma^2 + \frac{\eta_1 c_u}{2\gamma_1}\sum\limits_{t=0}^{T} \frac{L_F^2 \eta_t^2 c_u^2 \E\|\v_{t+1}\|^2}{\gamma_t} \\ 
&+ \frac{\eta_1 c_u}{2 \gamma_1} \sum\limits_{t=0}^{T} 2\gamma_t^2 \sigma^2 c\E\|\nabla F(\x_{t}) -  \nabla F(\x_{t}) + \nabla F(\x_{t+1}) \|^2) \\
&\leq F(\x_0) - F_* - \sum\limits_{t=0}^{T} \frac{\eta_t c_l}{4} \E\|\v_{t+1}\|^2 
+\frac{\eta_1 c_u \Delta_0}{2 \gamma_1} + \frac{\eta_1 c_u}{\gamma_1} \sum\limits_{t=0}^{T} \gamma_t^2 \sigma^2 + \sum\limits_{t=0}^{T} \frac{L_F^2 \eta_t^2 c_u^2 \E\|\v_{t+1}\|^2}{\gamma_t} \\ 
& + \frac{\eta_1 c_u}{2 \gamma_1} \left[\sum\limits_{t=0}^{T} 4\gamma_t^2 \sigma^2 c\E\|\nabla F(\x_{t})\|^2  + \sum\limits_{t=0}^{T} 4\gamma_t^2 \sigma^2 c L_F^2 \eta_t^2 c_u^2 \E\|\v_{t+1}\|^2 \right] \\
&\leq F(\x_0) - F_* + \frac{\eta_1 c_u \Delta_0}{2\gamma_1}  + \frac{c_u \eta_1}{\gamma_1 } \sum\limits_{t=0}^{T} \gamma_t^2\sigma^2 + \sum\limits_{t=0}^{T}  \frac{\eta_t c_l}{4}\E\|\nabla F(\x_{t})\|^2,
\end{split}  
\end{equation} 
where the last inequality holds because $\frac{2\eta_1 c_u}{\gamma_1} \gamma_t^2 \sigma^2 c \leq \frac{\eta_t c_l}{4}$, $\frac{\eta_1}{2\gamma_1} \frac{L_F^2 \eta_t^2 c_u^3}{\gamma_t} \leq \frac{\eta_t c_l}{8}$ and  $\frac{2\eta_1}{\gamma_1}\gamma_t^2 \sigma^2 c L_F^2 \eta_t^2 c_u^3 \leq \frac{\eta_t c_l}{8}$ by the setting of $\eta_t$ and $\gamma_t$ in the theorem.  
Hence,
\begin{equation}
\begin{split}
&\E\left[\sum\limits_{t=0}^{T} \eta_T c_l \|\nabla F(\x_t)\|^2\right] \leq \E\left[\sum\limits_{t=0}^{T} \eta_t c_l \|\nabla F(\x_t)\|^2\right] \\
&\leq 4(F(\x_0) - F_*) + \frac{2\eta_1 c_u \Delta_0}{\gamma_1} + \sum\limits_{t=0}^T \frac{4 c_u \eta_1}{\gamma_1} \gamma_t^2 \sigma^2 \\
&\leq  4(F(\x_0) - F_*) + \frac{\sqrt{c_l} \Delta_0}{L_F \sqrt{c_u}} + \sum\limits_{t=0}^T \frac{2 \sqrt{c_l}}{L_F \sqrt{c_u}} \gamma_t^2 \sigma^2.
\end{split} 
\end{equation}
Thus,
\begin{equation*}
\small 
\begin{split}
&\E \left[ \frac{1}{T+1} \sum\limits_{t=0}^{T} \|\nabla F(\x_t)\|^2 \right] \\
& \leq \frac{4(F(\x_0)-F(\x_*))}{\eta_T c_l (T+1)} + \frac{\Delta_0}{L_F \sqrt{c_l c_u} \eta_T (T+1)} +\sum\limits_{t=0}^{T} \frac{2\gamma_t^2}{\eta_T L_F \sqrt{c_l c_u} (T+1)} \sigma^2  \\
&\leq \frac{4\Delta_F}{\eta_T c_l (T+1)} + \frac{\Delta_0}{L_F \sqrt{c_l c_u} \eta_T (T+1)} + \frac{c_l^2}{32 \eta_T \sigma^2 L_F c^2 \sqrt{c_l c_u^5} (T+1)} \ln(T+2)\\ 
&=O\left(\frac{1}{\sqrt{T}} + \frac{\ln T}{\sqrt{T}}\right). 
\end{split} 
\end{equation*}
Furthermore, we have 
\begin{equation} 
\begin{split}
\E\left[ \sum\limits_{t=0}^T \gamma_t \Delta_t \right] \leq \Delta_0 + \frac{c_l^2}{16 \sigma^4 c^2 c_u^2} \ln(T+2) + \frac{1}{2} \E\left[  \sum\limits_{t=0}^{T} \gamma_t \|\nabla F(\x_t)\|^2\right] 
+ \E\left[ \sum\limits_{t=0}^{T} \frac{1}{2} \gamma_t \Delta_t \right] .
\end{split} 
\end{equation}
Then, 
\begin{equation}
\begin{split}
& \E\left[ \frac{1}{T+1} \sum\limits_{t=0}^{T} \Delta_t \right] \leq \frac{2\Delta_0}{\gamma_T T} + \frac{c_l^2}{8 \gamma_T \sigma^4 c^2 c_u^2 (T+1)} \ln(T+2) 
 + \frac{1}{2\gamma_T T} \E\left[\sum\limits_{t=0}^{T} \gamma_t \|\nabla F(\x_t)\|^2\right]\\
&=O\left(\frac{1}{\sqrt{T}} +  \frac{\ln T}{\sqrt{T}} \right). 
\end{split}
\end{equation}
which concludes the proof of the second part of the theorem. 
\end{proof}

\section{Proof of Theorem \ref{thm:min_PL}}
\begin{proof}[Proof of Theorem \ref{thm:min_PL}]
In this proof the subscript denote the epoch index $(1, ..., K)$.
Denote $\epsilon_0 = \max\{F(\x_0) - F_*, \Delta_0\}$. We prove by induction. Assume that at the initialization of $k$-th stage, we have $\E[F(\x_k) - F_*] \leq \epsilon_k$ and $\E\|\v_k - \nabla F(\x_k) \|^2 \leq \mu \epsilon_k$. By the analysis in Appendix \ref{sec:app_proof_thm1}, we know that after the $k$-th stage,
\begin{align*}
\E[F(\x_{k+1}) - F_*] \leq \frac{1}{2\mu} \E\left[\|\nabla F(\x_k)\|^2\right] &\leq\frac{\epsilon_k c_u}{\mu \gamma_k T_k c_l}  +  \frac{2\epsilon_k}{\mu \eta_k c_l T_k} + 2\gamma_k\sigma^2\frac{c_u}{\mu c_l}, 
\end{align*}
and 
\begin{align*}
&\E\left[\Delta_{k+1}\right]= \E\|\v_k - \nabla F(\x_k) \|^2 \leq \frac{2\epsilon_k}{\gamma_k T_k} + 4\gamma_k\sigma^2 +\E\left[\|\nabla F(\x_{k})\|^2\right].
\end{align*}
By setting $\gamma_k \leq \frac{\mu c_l \epsilon_k}{24 c_u \sigma^2}$, $\eta_k = \min\{\frac{\gamma_k \sqrt{c_l}}{2L_F \sqrt{c_u^3}}, \frac{1}{\sqrt{2}L_f c_u}\}$ and $T_k = \max\{\frac{48 c_u}{\mu\gamma_k c_l}, \frac{1}{6\mu\eta_k c_l}\}$,
then we have $\E[F(\x_{k+1}) - F_*] \leq \epsilon_{k+1} = \epsilon_k/2$ and $\E\|\v_k - \nabla F(\x_k) \|^2 \leq \epsilon_{k+1} = \epsilon_k/2$, where we assume $\mu\leq 1$ without loss of generality. 

Hence, after $K=\log (\epsilon_0/\epsilon)$ stages, it holds that  $\E[F(\x_{K+1}) - F_*] \leq \epsilon$ and $\E\|\v_K - \nabla F(\x_K) \|^2 \leq \epsilon$. The total number of iterations is $\widetilde{O}(\frac{1}{\mu^2 \epsilon})$.

\end{proof}

\section{Analysis of PDSM/PDAda with Strong Concavity}
\label{section:PDAda}
In this section, we analyze PDAda under Assumption \ref{ass:3} with the option $\Y\subseteq\R^{d'}$ is a bounded or unbounded convex set and $f(\x, \cdot)$ is $\lambda$-strongly  concave for any $\x$, while analysis with dual side PL condition is discussed in Appendix \ref{sec:app_pdada_withoutSC}. 
We need the following lemmas.

\begin{lemma}\label{lem:005} 
Suppose Assumption~\ref{ass:2} holds. Considering the PDAda update, with $\eta L_F\leq c_l/(2c_u^2)$ we have
\begin{align*}
&F(\x_{t+1})  \leq F(\x_t) +   \frac{\eta_x c_u}{2} \|\nabla_x f(\x_t, \y^*(\x_t)) - \u_{t+1}\|^2
- \frac{\eta_x c_l}{2}\|\nabla F(\x_t)\|^2  - \frac{\eta_x c_l}{4}\|\u_{t+1}\|^2. 
\end{align*} 
\end{lemma}

\begin{proof}[Proof of Lemma \ref{lem:005}] 
Denote $\teta_x = \eta_x s_t$.
Due to the smoothness of $F$, we have that under $\eta_x \leq 1/(2L_F)$
\begin{align*} 
&F(\x_{t+1}) \leq F(\x_t) + \nabla F(\x_t)^{\top} (\x_{t+1} - \x_t) + \frac{L_F}{2}\|\x_{t+1} - \x_t\|^2\\
&= F(\x_t) -  \nabla F(\x_t)^{\top} (\eta_x\circ \v_{t+1}) + \frac{L_F}{2} \|\teta_x \circ \v_{t+1}\|^2\\
&  = F(\x_t) +   \frac{1}{2}\|\sqrt{\teta_x}\circ (\nabla_x f(\x_t, \y^*(\x_t)) - \v_{t+1})\|^2- \frac{1}{2}  \|\sqrt{\teta_x}\circ \nabla F(\x_t)\|^2 
\\
&~~~ + (\frac{L_F}{2}  \|\teta_x\circ \v_{t+1}\|^2 - \frac{1}{2}\|\sqrt{\teta_x}\circ\v_{t+1}\|^2 ) \\
&  = F(\x_t) +   \frac{1}{2}\|\sqrt{\teta_x}\circ (\nabla_x f(\x_t, \y^*(\x_t)) - \v_{t+1})\|^2- \frac{1}{2}  \|\sqrt{\teta_x}\circ \nabla F(\x_t)\|^2 
\\
&~~~ + (\frac{\eta_x^2 c_u^2 L_F - \eta_x c_l}{2}  \|\v_{t+1}\|^2) \\
&  \leq F(\x_t)
+ \frac{\eta_x c_u}{2}\|\nabla_x f(\x_t, \y^*(\x_t)) - \v_{t+1}\|^2
- \frac{\eta_x c_l}{2}\|\nabla F(\x_t)\|^2-\frac{\eta_x c_l}{4}\|\v_{t+1}\|^2. 
\end{align*}
\end{proof}

\begin{lemma}
In Algorithm \ref{alg:pdadam}, it holds that
\label{lem:var_x}
\begin{equation*}
\begin{split}
& \E\|\v_{t+1} - \nabla_x f(\x_t, \y^*(\x_t))\|^2  \leq (1-\frac{\gamma}{2}) \E\|\v_t - \nabla_x f(\x_{t-1}, \y^*(\x_{t-1}))\|^2 \\
&~~~ + 4\gamma^2 \sigma^2 + 10\gamma L_f^2\E\|\y_t - \y^*(\x_t)\|^2 + \frac{2L_F^2}{\gamma} \E\|\x_t - \x_{t-1}\|^2.  
\end{split}
\end{equation*}
\end{lemma}
\begin{proof}[Proof of Lemma \ref{lem:var_x}]
Let $e_t =(1-\gamma)(\nabla F(\x_t) - \nabla F(\x_{t-1})) = (1-\gamma)(\nabla_x  f(\x_t, \y^*(\x_t)) - \nabla_x f(\x_{t-1}, \y^*(\x_{t-1})))$. We have
\begin{equation}
\|e_t\| \leq (1-\gamma) \|\nabla F(\x_t) - \nabla F(\x_{t-1})\| \leq (1-\gamma) L_F\|\x_t - \x_{t-1}\|. 
\end{equation}
and then it follows that
\begin{equation}
\begin{split}
&\E_t\|\v_{t+1} - \nabla_x f(\x_t, \y^*(\x_t)) + e_t\|^2 \\
& = \E \|(1-\gamma)(\v_{t} - \nabla_x f(\x_{t-1}, \y^*(\x_{t-1}))) + \gamma (\O_x(\x_t, \y_t) - \nabla_x f(\x_t, \y^*(\x_t)))\|^2 \\
&= (1-\gamma)^2 \E\|\v_t - \nabla_x f(\x_{t-1}, \y^*(\x_{t-1}))\|^2
+ \gamma^2 \E\|\O_x(\x_t, \y_t) - \nabla_x f(\x_t, \y^*(\x_t))\|^2 \\
&~~~+2(1-\gamma)\gamma\E \langle \v_t - \nabla_x f(\x_{t-1}, \y^*(\x_{t-1})), 
\O_x(\x_t, \y_t) - \nabla_x f(\x_t, \y_t)
\rangle. \\
&~~~+2(1-\gamma)\gamma\E \langle \v_t - \nabla_x f(\x_{t-1}, \y^*(\x_{t-1})), 
 \nabla_x f(\x_t, \y_t) - \nabla_x f(\x_t, \y^*(\x_t))
\rangle\\
&= (1-\gamma)^2 \E\|\v_t - \nabla_x f(\x_{t-1}, \y^*(\x_{t-1}))\|^2
+ \gamma^2 \E\|\O_x(\x_t, \y_t) - \nabla_x f(\x_t, \y^*(\x_t))\|^2 \\
&~~~+2(1-\gamma)\gamma\E \langle \v_t - \nabla_x f(\x_{t-1}, \y^*(\x_{t-1})), 
 \nabla_x f(\x_t, \y_t) - \nabla_x f(\x_t, \y^*(\x_t))
\rangle\\
&\leq (1-\gamma)^2 \E\|\v_t - \nabla_x f(\x_{t-1}, \y^*(\x_{t-1}))\|^2
+ 2\gamma^2 \E\|\O_x(\x_t, \y_t) - \nabla_x f(\x_t, \y_t)\|^2 \\
&~~~ + 2\gamma^2 \E\|\nabla_x f(\x_t, \y_t) - \nabla_x f(\x_t, \y^*(\x_t))\|^2
 + \frac{\gamma}{4}\E\|\v_t - \nabla_x f(\x_{t-1}, \y^*(\x_{t-1}))\|^2 \\
&~~~ + 4\gamma\E\|\nabla_x f(\x_t, \y_t) - \nabla_x f(\x_t, \y^*(\x_t))\|^2\\
&\leq \left((1-\gamma)^2+\frac{\gamma}{4}\right)\E\|\v_t - \nabla_x f(\x_{t-1}, \y^*(\x_{t-1}))\|^2
+ 2\gamma^2 \sigma^2 + 5 \gamma L_f^2 \|\y_t - \y^*(\x_t)\|^2.
\end{split}
\end{equation}

We have with $\gamma<\frac{1}{4}$, 
\begin{equation*}
\begin{split}
&\E\|\v_{t+1} - \nabla_x f(\x_t, \y^*(\x_t))\|^2 \leq (1+\gamma)\E\|\v_{t+1} - \nabla_x f(\x_t, \y^*(\x_t)) + e_t\|^2 + (1+1/\gamma)\|e_t\|^2\\
&\leq (1-\frac{\gamma}{2}) \E\|\v_t - \nabla_x f(\x_{t-1}, \y^*(\x_{t-1}))\|^2 + 2(1+\gamma)\gamma^2\sigma^2 + 5(1+\gamma)\gamma L_f^2 \E\|\y_t - \y^*(\x_t)\|^2 \\
&~~~ + (1+1/\gamma) (1-\gamma)^2 L_F^2 \|\x_t - \x_{t-1}\|^2 \\
&\leq (1-\frac{\gamma}{2}) \E\|\v_t - \nabla_x f(\x_{t-1}, \y^*(\x_{t-1}))\|^2
+ 4\gamma^2 \sigma^2 + 10\gamma L_f^2\E\|\y_t - \y^*(\x_t)\|^2 \\
&~~~ + \frac{2L_F^2}{\gamma} \E\|\x_t - \x_{t-1}\|^2.  
\end{split}
\end{equation*} 
\end{proof}


\begin{lemma}\label{lem:03}
Suppose Assumption \ref{ass:3} holds with the option $\Y\subseteq\R^{d'}$ is a bounded or unbounded convex set and $f(\x, \cdot)$ is $\lambda$-strongly  concave for any $\x$. With $\y_{t+1} =\Pi_{\Y}[\y_t + \eta_y \O_{fy}(\x_t, \y_t)]$, $\eta_y \leq \lambda/L_f^2$ and $\kappa=L_f/\lambda$, we have
\begin{equation*}
\begin{split} 
\E\|\y_{t+1} - \y^*(\x_{t+1})\|^2 & \leq (1-\frac{\eta_y \lambda}{2}) \E\|\y_t - \y^*(\x_t)\|^2
+ 2\eta_y^2 \sigma^2 + \frac{4\kappa^2}{\eta_y \lambda} \|\x_t - \x_{t+1}\|^2. 
\end{split}
\end{equation*}
\end{lemma}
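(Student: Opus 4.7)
The plan is to split the error $\|\y_{t+1}-\y^*(\x_{t+1})\|^2$ into two pieces: the one-step contraction of projected stochastic gradient ascent toward $\y^*(\x_t)$, and the drift of the maximizer $\y^*(\cdot)$ caused by moving $\x_t\to\x_{t+1}$. Concretely, I would start with Young's inequality
\begin{equation*}
\|\y_{t+1}-\y^*(\x_{t+1})\|^2 \leq (1+\alpha)\|\y_{t+1}-\y^*(\x_t)\|^2 + (1+1/\alpha)\|\y^*(\x_t)-\y^*(\x_{t+1})\|^2
\end{equation*}
for a parameter $\alpha>0$ to be tuned at the end; choosing $\alpha=\eta_y\lambda/2$ will ultimately deliver the target $1-\eta_y\lambda/2$ contraction factor and the $(1+1/\alpha)\le 2/(\eta_y\lambda)$ blow-up on the drift term.

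For the first piece, I would use non-expansiveness of $\Pi_{\Y}$ together with $\y^*(\x_t)\in\Y$, expand $\|\y_t+\eta_y\O_{fy}(\x_t,\y_t)-\y^*(\x_t)\|^2$, and take the conditional expectation. The inner product term becomes $2\eta_y\langle\nabla_y f(\x_t,\y_t),\y_t-\y^*(\x_t)\rangle$, which by $\lambda$-strong concavity of $f(\x_t,\cdot)$ together with the optimality condition $\langle\nabla_y f(\x_t,\y^*(\x_t)),\y_t-\y^*(\x_t)\rangle\le 0$ is at most $-2\eta_y\lambda\|\y_t-\y^*(\x_t)\|^2$. The squared-norm term splits into $\E_t\|\O_{fy}-\nabla_y f\|^2\le\sigma^2$ plus $\|\nabla_y f(\x_t,\y_t)\|^2$, which by $L_f$-Lipschitzness of $\nabla_y f$ and the optimality condition at $\y^*(\x_t)$ can be bounded by $L_f^2\|\y_t-\y^*(\x_t)\|^2$. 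Absorbing the $\eta_y^2 L_f^2$ term into the contraction under the stated bound on $\eta_y$ yields
\begin{equation*}
\E_t\|\y_{t+1}-\y^*(\x_t)\|^2 \leq (1-\eta_y\lambda)\|\y_t-\y^*(\x_t)\|^2 + \eta_y^2\sigma^2.
\end{equation*}

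For the drift term, I would invoke the standard $\kappa$-Lipschitz property of $\y^*(\cdot)$: for $\lambda$-strongly concave $f$ with $L_f$-Lipschitz gradient, the implicit-function argument based on the optimality condition gives $\|\y^*(\x_t)-\y^*(\x_{t+1})\|\le\kappa\|\x_t-\x_{t+1}\|$ with $\kappa=L_f/\lambda$. Plugging this and the contraction bound into the Young's inequality, and using $(1+\alpha)(1-\eta_y\lambda)\le 1-\eta_y\lambda/2$ and $(1+\alpha)\eta_y^2\sigma^2\le 2\eta_y^2\sigma^2$ for $\alpha=\eta_y\lambda/2\le 1/2$, delivers exactly the claimed bound with coefficient $(1+1/\alpha)\kappa^2 \le 4\kappa^2/(\eta_y\lambda)$ on the $\|\x_t-\x_{t+1}\|^2$ term.

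The main subtlety I expect is the careful handling of the $\y^*(\x_t)$ optimality condition in the constrained setting (where $\nabla_y f(\x_t,\y^*(\x_t))$ need not vanish), which is needed both to turn the strong-concavity cross term into a contraction and to bound $\|\nabla_y f(\x_t,\y_t)\|$ by $L_f\|\y_t-\y^*(\x_t)\|$. Everything else is a matter of tracking constants so that the final coefficients $1-\eta_y\lambda/2$, $2\eta_y^2\sigma^2$, and $4\kappa^2/(\eta_y\lambda)$ emerge cleanly under the stated smallness condition on $\eta_y$.
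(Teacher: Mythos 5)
Your overall architecture (Young's inequality split with $\alpha=\eta_y\lambda/2$, contraction of the inner step, $\kappa$-Lipschitzness of $\y^*(\cdot)$ for the drift) matches the paper. But there is a genuine gap in how you handle the projection in the constrained case, and you flag it without actually resolving it.

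You compare $\y_{t+1}$ directly to $\y^*(\x_t)$ by writing $\y^*(\x_t)=\Pi_\Y[\y^*(\x_t)]$ and invoking non-expansiveness. After expanding, the quadratic term you must control is $\eta_y^2\|\nabla_y f(\x_t,\y_t)\|^2$, and you propose bounding it by $L_f^2\|\y_t-\y^*(\x_t)\|^2$ "by $L_f$-Lipschitzness of $\nabla_y f$ and the optimality condition at $\y^*(\x_t)$." That bound is false whenever $\y^*(\x_t)$ sits on the boundary of $\Y$, because then $\nabla_y f(\x_t,\y^*(\x_t))\neq 0$: taking $\y_t=\y^*(\x_t)$ makes the right-hand side zero while the left-hand side is $\|\nabla_y f(\x_t,\y^*(\x_t))\|^2>0$. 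The variational inequality $\langle\nabla_y f(\x_t,\y^*(\x_t)),\y-\y^*(\x_t)\rangle\le 0$ only controls the \emph{sign} of an inner product; it gives you nothing on the \emph{norm} of the gradient and therefore cannot rescue this step.

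The paper avoids the problem by not projecting $\y^*(\x_t)$ trivially. Instead it uses the fixed-point identity $\y^*(\x_t)=\Pi_\Y\bigl[\y^*(\x_t)+\eta_y\nabla_y f(\x_t,\y^*(\x_t))\bigr]$ and compares the two \emph{pre-projection} points, so non-expansiveness gives
\begin{equation*}
\|\y_{t+1}-\y^*(\x_t)\|\le\bigl\|\bigl(\y_t+\eta_y\O_{fy}(\x_t,\y_t)\bigr)-\bigl(\y^*(\x_t)+\eta_y\nabla_y f(\x_t,\y^*(\x_t))\bigr)\bigr\|.
\end{equation*}
After taking the conditional expectation, the quadratic term is $\eta_y^2\|\nabla_y f(\x_t,\y_t)-\nabla_y f(\x_t,\y^*(\x_t))\|^2$, which Lipschitzness does control, and the cross term becomes $2\eta_y\langle\y_t-\y^*(\x_t),\nabla_y f(\x_t,\y_t)-\nabla_y f(\x_t,\y^*(\x_t))\rangle\le-2\eta_y\lambda\|\y_t-\y^*(\x_t)\|^2$ by strong monotonicity, without needing the variational inequality at all. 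With that substitution the rest of your argument, including the choice $\eta_y\le\lambda/L_f^2$ to absorb the $\eta_y^2L_f^2$ term and the constant tracking through Young's inequality, goes through exactly as you describe.
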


\begin{proof}[Proof of Lemma \ref{lem:03}]
Since $\y^*(\x_t) = \Pi_{\mathcal{Y}} [\y^*(\x_t) + \eta_y \nabla_y f(\x_t, \y^*(\x_t))]$ and $\y_{t+1} = \Pi_{\mathcal{Y}}[\y_t + \eta_y \O_y(\x_t, \y_t)]$, we have
\begin{equation*}
\begin{split}
& \E\|\y_{t+1} - \y^*(\x_t)\|^2 
= \| \Pi_{y}[\y_t + \eta_y \O_y(\x_t, \y_t)] - \Pi_{y}[\y^*(\x_t) + \eta_y \nabla_y f(\x_t, \y^*(\x_t))] \|^2 \\
& \leq \E\| [\y_t + \eta_y \O_y(\x_t, \y_t)] - [\y^*(\x_t) + \eta_y \nabla_y f(\x_t, \y^*(\x_t))] \|^2 \\
& = \E\| [\y_t + \eta_y \O_y(\x_t, \y_t) - \eta_y \nabla f(\x_t, \y_t) + \eta_y \nabla f(\x_t, \y_t)] - [\y^*(\x_t) + \eta_y \nabla_y f(\x_t, \y^*(\x_t))] \|^2 \\
& = \E\| [\y_t + \eta_y \nabla f(\x_t, \y_t)] - [\y^*(\x_t) + \eta_y \nabla_y f(\x_t, \y^*(\x_t))] \|^2 + \eta_y^2\E\|\O_y(\x_t, \y_t)-\nabla f(\x_t, \y_t)\|^2 \\ 
&+ 2\eta_y\E \langle [\y_t + \eta_y \nabla f(\x_t, \y_t)] - [\y^*(\x_t) + \eta_y \nabla_y f(\x_t, \y^*(\x_t))], \nabla \O_y(\x_t, \y_t)-f(\x_t, \y_t)\rangle\\
&\leq \E\| [\y_t + \eta_y \nabla f(\x_t, \y_t)] - [\y^*(\x_t) + \eta_y \nabla_y f(\x_t, \y^*(\x_t))] \|^2 + \eta_y^2 \sigma^2,
\end{split}
\end{equation*}
where 
\begin{equation}
\begin{split}
&\E\| [\y_t + \eta_y \nabla f(\x_t, \y_t)] - [\y^*(\x_t) + \eta_y \nabla_y f(\x_t, \y^*(\x_t))] \|^2 \\
&= \E\|\y_t - \y^*(\x_t)\|^2 + \eta_y^2 \E\|\nabla f(\x_t, \y_t) - \nabla_y f(\x_t, \y^*(\x_t))\|^2 \\
&~~~ + 2\eta_y \E\langle \y_t - \y^*(\x_t), \nabla_y f(\x_t, \y_t) - \nabla_y f(\x_t, \y^*(\x_t))\rangle\\
&\leq \E\|\y_t - \y^*(\x_t)\|^2 + \eta_y^2 \E\|\nabla f(\x_t, \y_t) - \nabla_y f(\x_t, \y^*(\x_t)) \|^2 -2\eta_y \lambda \E\| \y_t - \y^*(\x_t) \|^2\\
&\leq (1- \eta_y\lambda)\E\|\y_t - \y^*(\x_t)\|^2,
\end{split} 
\end{equation}
where the first inequality uses strong monotone inequality as $-f(\x_t, \cdot)$ is $\lambda$-strongly convex and the second inequality uses the setting $\eta_y \leq \lambda/L_f^2$.
Then,
\begin{equation}
\begin{split} 
&\E\|\y_{t+1} - \y^*(\x_{t+1})\|^2 \leq 
(1+\frac{\eta_y \lambda}{2})\E\|\y_{t+1} - \y^*(\x_t)\|^2 + (1+\frac{2}{\eta_y\lambda})\E\|\y^*(\x_t) - \y^*(\x_{t+1})\|^2 \\
& \leq (1+ \frac{\eta_y \lambda}{2}) (1-\eta_y \lambda) \E\|\y_t - \y^*(\x_t)\|^2 + (1+\frac{\eta_y \lambda}{2}) \eta_y^2 \sigma^2 + \frac{4}{\eta_y\lambda} \kappa^2 
\E\|\x_t - \x_{t+1}\|^2 \\ 
& \leq (1-\frac{\eta_y \lambda}{2}) \E\|\y_t - \y^*(\x_t)\|^2
+ 2\eta_y^2 \sigma^2 + \frac{4\kappa^2}{\eta_y \lambda} \E\|\x_t - \x_{t+1}\|^2,
\end{split}
\end{equation} 
where the first inequality uses the property that $\y^*(\cdot)$ is $\kappa$-Lipschitz (Lemma 4.3 of \citep{lin2020gradient}).
\end{proof}

Then we are ready to prove Theorem \ref{thm:PDAda} in the case $\Y\subseteq\R^{d'}$ is a bounded or unbounded convex set and $f(\x, \cdot)$ is $\lambda$-strongly  concave for any $\x$.



\begin{proof}[Proof of Theorem \ref{thm:PDAda}]
Below, we denote by $\Delta_{x, t}= \|\nabla_x f(\x_t, \y^*(\x_t))- \v_{t+1}\|^2$. 
By applying Lemma~\ref{lem:var_x}, we have 
\begin{align*}
&\E\bigg[\sum_{t=0}^T\Delta_{x,t}\bigg] \leq \frac{2\E[\Delta_{x,0}]}{\gamma} + 8\gamma\sigma^2(T+1) +20L_f^2\E\left[\sum_{t=1}^{T+1}\delta_{y,t}\right] + \E\bigg[\sum_{t=0}^T\frac{4L_F^2\eta_x^2 c_u^2\|\v_{t+1}\|^2)}{\gamma^2}\bigg]. 
\end{align*} 

Applying Lemma~\ref{lem:03}, we have
\begin{align*}
&\E\bigg[\sum_{t=0}^T \delta_{y,t}\bigg] \leq \frac{2}{\eta_y \lambda}\E[\delta_{y,0}] + \frac{4\eta_y\sigma^2(T+1)}{\lambda} + \frac{8\kappa^2\eta_x^2 c_u^2}{\eta_y^2  \lambda^2}\sum\limits_{t=0}^{T} \|\v_{t+1}\|^2.  
\end{align*} 
and 
\begin{align*}
&\E\bigg[\sum_{t=1}^{T+1} \delta_{y,t}\bigg] \leq \frac{2}{\eta_y \lambda}\E[\delta_{y,0}] + \frac{4\eta_y\sigma^2(T+1)}{\lambda} + \frac{8\kappa^2\eta_x^2 c_u^2}{\eta_y^2  \lambda^2}\sum\limits_{t=0}^{T} \|\v_{t+1}\|^2.  
\end{align*} 

Combining the above two bounds with Lemma~\ref{lem:005}, we have 
\begin{align*}
&\E\bigg[\sum_{t=0}^T\|\nabla F(\x_t)\|^2\bigg]\leq \frac{2(F(\x_0) - F(\x_{T+1}))}{\eta_x c_l} + \frac{c_u}{c_l} \E\left[\sum\limits_{t=0}^{T} \Delta_{x, t} \right] - \frac{1}{2} \sum\limits_{t=0}^{T} \|\v_{t+1}\|^2 \\
&\leq \frac{2(F(\x_0) - F_*)}{\eta_x c_l} 
+ \frac{2c_u\E[\Delta_{x,0}]}{\gamma c_l} + \frac{8c_u\gamma\sigma^2(T+1)}{c_l}  + \frac{20c_u L_f^2}{c_l}\E\left[\sum\limits_{t=1}^{T+1}\delta_{y,t}\right] \\
&~~~ + \E\bigg[\sum_{t=0}^T\frac{4c_u^3 L_F^2\eta_x^2 \|\v_{t+1}\|^2)}{c_l \gamma^2}\bigg]  - \frac{1}{2} \sum\limits_{t=0}^{T} \|\v_{t+1}\|^2  \\
&\leq \frac{2(F(\x_0) - F_*)}{\eta_x c_l} 
+ \frac{2c_u\E[\Delta_{x,0}]}{\gamma c_l} + \frac{8c_u\gamma\sigma^2(T+1)}{c_l} 
+\frac{40c_uL_f^2 }{c_l\eta_y  \lambda}\E[\delta_{y,0}] + \frac{80c_u L_f^2 \eta_y\sigma^2(T+1)}{c_l \lambda} \\ 
&~~~ + \bigg(\frac{160c_u^3L_f^2\kappa^2\eta_x^2}{c_l\eta_y^2 \lambda^2} +  \frac{4c_u^3L_F^2\eta_x^2}{c_l \gamma^2}  - \frac{1}{2} \bigg) \E\left[\sum\limits_{t=0}^{T} \|\v_{t+1}\|^2\right]\\
&\leq \frac{2(F(\x_0) - F_*)}{\eta_x c_l} 
+ \frac{2c_u\E[\Delta_{x,0}]}{\gamma c_l} + \frac{8c_u\gamma\sigma^2(T+1)}{c_l} 
+\frac{40c_uL_f^2 }{c_l\eta_y  \lambda}\E[\delta_{y,0}] + \frac{80c_u L_f^2 \eta_y\sigma^2(T+1)}{c_l \lambda}, 
\end{align*}  
where the last inequality uses the fact $\bigg(\frac{160c_u^3(L_f^2\kappa^2\eta_x^2}{c_l\eta_y^2 \lambda^2} + \frac{4c_u^3L_F^2\eta_x^2}{c_l \gamma^2}  - \frac{1}{2} \bigg) \leq -\frac{1}{4} \leq 0$ due to 
\begin{align*}
\eta_x \leq \sqrt{\frac{c_l}{c_u^3}} \min\{\frac{\eta_y \lambda}{48 L_f\kappa}, \frac{\gamma}{8L_F}\}. 
\end{align*}
Hence, we have
\begin{align*}
    \frac{1}{T+1}\E\bigg[\sum_{t=0}^T\|\nabla F(\x_t)\|^2\bigg]\leq& \frac{2\Delta_F}{\eta_x c_l T} 
+ \frac{2c_u\E[\Delta_{x,0}]}{\gamma c_l T} + \frac{8c_u\gamma\sigma^2}{c_l}
+ \frac{40c_uL_f^2}{c_l\eta_y  \lambda T}\E[\delta_{y,0}] + \frac{80c_u L_f^2 \eta_y\sigma^2}{c_l \lambda}. 
\end{align*} 
With $\gamma = O(\frac{\sqrt{\kappa}}{\sqrt{T}})$, $\eta_x = O(\frac{1}{\kappa^{3/2}\sqrt{T}})$ and $\eta_y = O(\frac{\sqrt{\kappa}}{\sqrt{T}})$, we have 
\begin{align*} 
    \frac{1}{T+1}\E\bigg[\sum_{t=0}^T\|\nabla F(\x_t)\|^2\bigg]\leq O\left(\frac{\kappa^{3/2}}{\sqrt{T}}\right),
\end{align*}  
which concludes the first part of the theorem. For the second part, we have 
\begin{equation}
\small 
\begin{split} 
&\E\left[\sum\limits_{t=0}^{T} (\Delta_{x, t} + L_f^2 \delta_{y,t}) \right]\\
& \leq \frac{2\E[\Delta_{x,0}]}{\gamma} + 8\gamma\sigma^2(T+1) +\E\left[\sum\limits_{t=0}^{T}(20L_f^2\delta_{y,t+1} + L_f^2\delta_{y,t})\right] +  \E\bigg[\sum_{t=0}^T\frac{4L_F^2c_u^2 \eta_x^2 \|\v_{t+1}\|^2)}{\gamma^2}\bigg]\\
& \leq \frac{2\E[\Delta_{x,0}]}{\gamma} + 8\gamma\sigma^2(T+1) +
\frac{42 L_f^2}{\eta_y \lambda}\E[\delta_{y,0}] + \frac{84 L_f^2 \eta_y\sigma^2(T+1)}{\lambda} \\
& + \frac{168 L_f^2\kappa^2\eta_x^2c_u^2}{\eta_y^2  \lambda^2}\sum\limits_{t=0}^{T} \|\v_{t+1}\|^2 
+ \E\bigg[\sum_{t=0}^T\frac{4L_F^2\eta_x^2 c_u^2 \|\v_{t+1}\|^2}{\gamma^2}\bigg] \\
&\leq \frac{2\E[\Delta_{x,0}]}{\gamma} + 8\gamma\sigma^2(T+1) +
\frac{42L_f^2}{\eta_y \lambda}\E[\delta_{y,0}] + \frac{84L_f^2\eta_y\sigma^2(T+1)}{\lambda} \\ 
&+ \frac{1}{3} \E\left[\sum\limits_{t=0}^{T} \|\v_{t+1} - \nabla_x f(\x_t, \y^*(\x_t)) - \nabla F(\x_t)\|^2   \right]\\
&\leq \frac{2\E[\Delta_{x,0}]}{\gamma} + 8\gamma\sigma^2(T+1) +
\frac{42L_f^2}{\eta_y \lambda}\E[\delta_{y,0}] + \frac{84L_f^2 \eta_y\sigma^2(T+1)}{\lambda} \\ 
&+ \frac{2}{3} \E\left[\sum\limits_{t=0}^{T} \Delta_{x,t} +\|\nabla F(\x_t)\|^2   \right] 
\end{split}
\end{equation}
Thus, 
\begin{equation}
\begin{split} 
\frac{1}{T+1}\E\left[\sum\limits_{t=0}^{T} (\Delta_{x, t} + L_f^2 \delta_{y,t}) \right] \leq O\left(\frac{\kappa^{3/2}}{\sqrt{T}}\right).  
\end{split}
\end{equation}
\end{proof}

\section{Analysis of PDSM/PDAda without Strong Concavity}
\label{sec:app_pdada_withoutSC}
Note that the analysis of PDSM/PDAda in the previous section uses strong concavity mainly in proving Lemma \ref{lem:03}. Therefore, we only need to provide a similar bound as in \ref{lem:03} then we can fit in the framework of the previous section.
\begin{lemma}[Lemma A.3 of \citep{nouiehed2019solving}]
\label{lem:nouiheld}
Under Assumption \ref{ass:3}, with \\$\kappa = L_f/\lambda$, for any $\x_1, \x_2$, and $\y^*(\x_1) \in \arg\max_{y'} f(\x_1, \y')$, there exists some \\$\y^*(\x_2) \in \arg\max_{y'} f(\x_2, \y')$ such that
\begin{equation}
  \|\y^*(\x_1) - \y^*(\x_2) \| \leq \kappa \|\x_1-\x_2\|.
\end{equation}
\end{lemma}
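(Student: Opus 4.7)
The plan is to fix $\x_1, \x_2 \in \R^d$ and an arbitrary $\y_1^* \in Y^*(\x_1) := \arg\max_{\y'} f(\x_1, \y')$, and then to exhibit a specific $\y_2^* \in Y^*(\x_2)$ for which the claimed bound holds. The natural candidate is the projection $\y_2^* := \arg\min_{\y \in Y^*(\x_2)} \|\y - \y_1^*\|$, which is well defined because $L_f$-smoothness together with the dual-side $\lambda$-PL condition forces $Y^*(\x_2)$ to be nonempty and closed. Note that the statement only asks for existence of \emph{some} $\y_2^* \in Y^*(\x_2)$, so this particular selection suffices; no global single-valued Lipschitz branch needs to be constructed.

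The core of the argument is to sandwich the single scalar $f(\x_2, \y_2^*) - f(\x_2, \y_1^*)$. For the lower bound I would invoke the quadratic-growth consequence of the dual-side PL condition on $f(\x_2, \cdot)$ (the standard implication that PL yields QG), applied at $\y_1^*$ with projection $\y_2^*$:
\[
f(\x_2, \y_2^*) - f(\x_2, \y_1^*) \;\geq\; \tfrac{\lambda}{2}\,\|\y_1^* - \y_2^*\|^2.
\]
For the upper bound I would observe $f(\x_1, \y_2^*) - f(\x_1, \y_1^*) \leq 0$ (since $\y_1^*$ maximizes $f(\x_1, \cdot)$), so that
\[
f(\x_2, \y_2^*) - f(\x_2, \y_1^*) \;\leq\; [f(\x_2, \y_2^*) - f(\x_1, \y_2^*)] - [f(\x_2, \y_1^*) - f(\x_1, \y_1^*)],
\]
and express each bracket as a line integral $\int_0^1 \langle \nabla_x f(\x_1 + t(\x_2-\x_1), \cdot), \x_2-\x_1\rangle\,dt$; the $L_f$-Lipschitz continuity of $\nabla_x f$ in $\y$ then gives
\[
f(\x_2, \y_2^*) - f(\x_2, \y_1^*) \;\leq\; L_f\,\|\x_1 - \x_2\|\,\|\y_1^* - \y_2^*\|.
\]
Chaining the two inequalities and dividing by $\|\y_1^* - \y_2^*\|$ produces the desired Lipschitz-type bound.

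The main obstacle is matching the precise constant $\kappa = L_f/\lambda$ stated in the lemma rather than the $2\kappa$ delivered by the QG constant $\lambda/2$. Two routes to tighten are available: (i) adopt the PL-constant convention used in \citep{nouiehed2019solving}, under which QG holds with the sharp constant $\lambda$, yielding exactly $\kappa$ after cancellation; or (ii) replace the projection construction by taking $\y_2^*$ to be the limit of the gradient-ascent trajectory on $f(\x_2, \cdot)$ initialized at $\y_1^*$ with step $1/L_f$, and bound the total path length $\sum_k (1/L_f)\|\nabla_y f(\x_2, \y^{(k)})\|$ by combining the geometric decay of suboptimality under PL with $\|\nabla_y f\|^2 \leq 2L_f\,\mathrm{gap}$ and the initial-gap estimate just derived. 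The delicacy in (ii) is to telescope the geometric series of gradient norms cleanly so that the $\sqrt{\cdot}$ of the initial gap is absorbed into a single factor of $\kappa$ without an extraneous factor of $2$.
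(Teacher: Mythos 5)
The paper does not actually prove this lemma; it is imported by citation from Lemma A.3 of \citet{nouiehed2019solving}, so there is no in-paper proof to compare against. Judged on its own terms, your sketch is on the right track structurally but has a genuine, quantitative gap: sandwiching the scalar gap $f(\x_2,\y_2^*)-f(\x_2,\y_1^*)$ between the quadratic-growth lower bound and the smoothness upper bound yields $\|\y_1^*-\y_2^*\|\le 2\kappa\|\x_1-\x_2\|$, not the stated $\kappa\|\x_1-\x_2\|$, and neither of your two suggested repairs actually removes the factor of $2$. For fix (i): with the paper's convention $\|\nabla_y f(\x,\y)\|^2 \ge 2\lambda\bigl(\max_{\y'}f(\x,\y')-f(\x,\y)\bigr)$, the sharp quadratic-growth constant implied by PL is $\lambda/2$ (the model $f(\x,\y)=-\tfrac{\lambda}{2}\|\y\|^2$ meets PL with equality and has growth exactly $\tfrac{\lambda}{2}\mathrm{dist}^2$), and this ratio of one-half between the growth constant and the PL constant is invariant under any rescaling of the convention; so there is no convention under which QG ``holds with the sharp constant $\lambda$.'' For fix (ii): summing the gradient-ascent step lengths is precisely the standard derivation of QG from PL, so it reproduces the same $\lambda/2$ constant (the continuous flow gives path length $\le\sqrt{2\,\mathrm{gap}_0/\lambda}$, and the discrete version with step $1/L_f$ is a constant factor worse), hence again $2\kappa$ at best.

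The route that actually produces $\kappa$ is to sandwich the \emph{gradient norm}, not the function-value gap. PL combined with QG yields the error-bound inequality with constant $\lambda$, not $\lambda/2$:
\begin{equation*}
\|\nabla_y f(\x_2,\y_1^*)\|^2 \;\ge\; 2\lambda\bigl(\max_{\y'}f(\x_2,\y')-f(\x_2,\y_1^*)\bigr) \;\ge\; 2\lambda\cdot\tfrac{\lambda}{2}\,\mathrm{dist}\bigl(\y_1^*,\arg\max_{\y'}f(\x_2,\y')\bigr)^2 \;=\; \lambda^2\,\mathrm{dist}(\y_1^*,\cdot)^2.
\end{equation*}
On the other side, since in the PL case $\Y=\R^{d'}$ so that $\nabla_y f(\x_1,\y_1^*)=0$, the $L_f$-Lipschitz continuity of $\nabla f$ gives
\begin{equation*}
\|\nabla_y f(\x_2,\y_1^*)\| \;=\; \|\nabla_y f(\x_2,\y_1^*)-\nabla_y f(\x_1,\y_1^*)\| \;\le\; L_f\|\x_1-\x_2\|.
\end{equation*}
Chaining these gives $\mathrm{dist}(\y_1^*,\arg\max_{\y'}f(\x_2,\y'))\le\kappa\|\x_1-\x_2\|$, and taking $\y_2^*$ to be the nearest maximizer closes the argument with the advertised constant. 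This is also the natural generalization of the strongly-concave proof (Lemma 4.3 of \citet{lin2020gradient}), which sandwiches the same gradient norm using strong monotonicity in place of the error bound; your QG-based derivation is a weaker intermediate step and does not exploit the gradient-level information that the PL condition provides.
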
 
Note that in the further analysis we need to properly choose all the $\y^*(\x_t)$ as required by Lemma \ref{lem:nouiheld}, i.e., given $\y^*(\x_t)$, $\y^*(\x_{t+1})$ is chosen from the set $\arg\max_{y'} f(\x_{t+1}, \y')$  such that $\|\y^*(\x_t) - \y^*(\x_{t+1}) \| \leq \kappa \|\x_t-\x_{t+1}\|$. Then we can prove the following lemma.
\begin{lemma}\label{lem:03_withoutSC}
Suppose Assumption \ref{ass:3} holds. With $\y_{t+1} =\y_t + \eta_y \O_{fy}(\x_t, \y_t)$, $\eta_y \leq \lambda/(2L_f^2)$ and $\kappa=L_f/\lambda$, we have that for any $\y^*(\x_t)\in \arg\max_{y'} f(\x_{t}, \y')$ there is a $\y^*(\x_{t+1})\in \arg\max_{y'} f(\x_{t+1}, \y')$ such that 
\begin{equation*}
\begin{split} 
\E\|\y_{t+1} - \y^*(\x_{t+1})\|^2 & \leq (1-\frac{\eta_y \lambda}{4}) \E\|\y_t - \y^*(\x_t)\|^2
+ 2\eta_y^2 \sigma^2 + \frac{8\kappa^2}{\eta_y \lambda} \|\x_t - \x_{t+1}\|^2. 
\end{split}
\end{equation*}
\end{lemma}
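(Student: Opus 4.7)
The plan is to mirror the strongly-concave argument of Lemma~\ref{lem:03}, replacing the roles played by $\lambda$-strong concavity with two ingredients: (i) the quadratic-growth property that is implied by the dual-side PL condition together with concavity of $f(\x,\cdot)$ (cf.\ Karimi, Nutini \& Schmidt, 2016), and (ii) the set-valued Lipschitzness of the argmax map provided by Lemma~\ref{lem:nouiheld}.

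First, I would designate $\y^*(\x_t)$ as the projection of $\y_t$ onto $\mathcal{Y}^*(\x_t):=\arg\max_{\y'}f(\x_t,\y')$, which is a nonempty convex set because $f(\x_t,\cdot)$ is concave. Expanding $\|\y_{t+1}-\y^*(\x_t)\|^2 = \|(\y_t-\y^*(\x_t)) + \eta_y \O_{fy}(\x_t,\y_t)\|^2$ and taking conditional expectation produces a cross term $2\eta_y\langle\nabla_y f(\x_t,\y_t),\y_t-\y^*(\x_t)\rangle$ plus a noise term $\eta_y^2\E\|\O_{fy}\|^2$. Concavity of $f(\x_t,\cdot)$ bounds the cross term above by $-2\eta_y e_t$, where $e_t:=f(\x_t,\y^*(\x_t))-f(\x_t,\y_t)\ge 0$, while $L_f$-smoothness yields both $\|\nabla_y f(\x_t,\y_t)\|^2\le 2L_f e_t$ and $e_t\le (L_f/2)\|\y_t-\y^*(\x_t)\|^2$, so the noise term can be controlled by $\eta_y^2\sigma^2 + \eta_y^2 L_f^2\|\y_t-\y^*(\x_t)\|^2$.

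Because $\y^*(\x_t)$ is the projection, the dual-side PL condition gives the quadratic-growth inequality $e_t\ge(\lambda/2)\|\y_t-\y^*(\x_t)\|^2$. Substituting and using $\eta_y\le\lambda/(2L_f^2)$ to absorb $\eta_y^2 L_f^2$ into half of $\eta_y\lambda$ yields
\begin{equation*}
\E_t\|\y_{t+1}-\y^*(\x_t)\|^2 \;\le\; \Bigl(1-\tfrac{\eta_y\lambda}{2}\Bigr)\|\y_t-\y^*(\x_t)\|^2 + \eta_y^2\sigma^2.
\end{equation*}
To migrate from $\y^*(\x_t)$ to some $\y^*(\x_{t+1})\in\mathcal{Y}^*(\x_{t+1})$, I would apply Young's inequality with parameter $a=\eta_y\lambda/4$ to the decomposition $\y_{t+1}-\y^*(\x_{t+1})=(\y_{t+1}-\y^*(\x_t))+(\y^*(\x_t)-\y^*(\x_{t+1}))$, and invoke Lemma~\ref{lem:nouiheld} to select the companion $\y^*(\x_{t+1})$ with $\|\y^*(\x_t)-\y^*(\x_{t+1})\|\le\kappa\|\x_t-\x_{t+1}\|$. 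The algebraic factor $(1+\eta_y\lambda/4)(1-\eta_y\lambda/2)\le 1-\eta_y\lambda/4$ produces the claimed contraction, and $(1+4/(\eta_y\lambda))\le 8/(\eta_y\lambda)$ (valid since the stepsize restriction guarantees $\eta_y\lambda\le 1$) produces the $8\kappa^2/(\eta_y\lambda)$ coefficient on the $\x$-drift; the factor $(1+\eta_y\lambda/4)\le 2$ absorbs into the $2\eta_y^2\sigma^2$ noise term.

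The main obstacle I anticipate is the quadratic-growth step: it crucially requires choosing $\y^*(\x_t)$ as the \emph{projection} of $\y_t$ onto the possibly nonsingleton optimal set, and one must ensure that the $\y^*(\x_{t+1})$ guaranteed by Lemma~\ref{lem:nouiheld} remains usable alongside the projection-based choice at the next iterate without breaking the recursion when this lemma is iterated in the proof of Theorem~\ref{thm:PDAda}. Once this alignment of ``closest optimal point'' across consecutive $\x_t$'s is fixed, the rest follows the same template as Lemma~\ref{lem:03}, with strong monotonicity replaced throughout by the PL--concavity pairing.
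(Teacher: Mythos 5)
Your proposal is correct and follows essentially the same path as the paper: expand $\|\y_{t+1}-\y^*(\x_t)\|^2$, bound the cross term via concavity of $f(\x_t,\cdot)$, apply the PL-induced quadratic-growth inequality to get the contraction factor, use $L_f$-Lipschitzness to absorb the $\eta_y^2$ term under $\eta_y\le\lambda/(2L_f^2)$, and finish with Young's inequality at parameter $\eta_y\lambda/4$ together with Lemma~\ref{lem:nouiheld}; your derivation of $\|\nabla_y f(\x_t,\y_t)\|^2\le L_f^2\|\y_t-\y^*(\x_t)\|^2$ via $\|\nabla_y f\|^2\le 2L_f e_t$ and $e_t\le (L_f/2)\|\y_t-\y^*(\x_t)\|^2$ is interchangeable with the paper's direct gradient-Lipschitz bound because $\nabla_y f(\x_t,\y^*(\x_t))=0$. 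The tension you flag between choosing $\y^*(\x_t)$ as the projection of $\y_t$ (needed for quadratic growth) and choosing $\y^*(\x_{t+1})$ as the Lipschitz companion supplied by Lemma~\ref{lem:nouiheld} (which need not be the projection of $\y_{t+1}$) is a genuine subtlety that the paper acknowledges in the remark preceding the lemma but does not fully reconcile across iterations; your instinct to pin it down before iterating the recursion in the proof of Theorem~\ref{thm:PDAda} is the right one.
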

\begin{proof}[Proof of Lemma \ref{lem:03_withoutSC}]
It holds that
\begin{equation*}
\begin{split}
& \E\|\y_{t+1} - \y^*(\x_t)\|^2 
= \| \y_t + \eta_y \O_y(\x_t, \y_t) - \y^*(\x_t)] \|^2 \\
& = \E\| [\y_t + \eta_y \O_y(\x_t, \y_t)] - [\y^*(\x_t) + \eta_y \nabla_y f(\x_t, \y^*(\x_t))] \|^2 \\
& = \E\| [\y_t + \eta_y \O_y(\x_t, \y_t) - \eta_y \nabla f(\x_t, \y_t) + \eta_y \nabla f(\x_t, \y_t)] - [\y^*(\x_t) + \eta_y \nabla_y f(\x_t, \y^*(\x_t))] \|^2 \\
& = \E\| [\y_t + \eta_y \nabla f(\x_t, \y_t)] - [\y^*(\x_t) + \eta_y \nabla_y f(\x_t, \y^*(\x_t))] \|^2 + \eta_y^2\E\|\O_y(\x_t, \y_t)-\nabla f(\x_t, \y_t)\|^2 \\ 
&+ 2\eta_y\E \langle [\y_t + \eta_y \nabla f(\x_t, \y_t)] - [\y^*(\x_t) + \eta_y \nabla_y f(\x_t, \y^*(\x_t))], \nabla \O_y(\x_t, \y_t)-f(\x_t, \y_t)\rangle\\
&\leq \E\| [\y_t + \eta_y \nabla f(\x_t, \y_t)] - [\y^*(\x_t) + \eta_y \nabla_y f(\x_t, \y^*(\x_t))] \|^2 + \eta_y^2 \sigma^2,
\end{split}
\end{equation*}
where 
\begin{equation}
\begin{split}
&\E\| [\y_t + \eta_y \nabla f(\x_t, \y_t)] - [\y^*(\x_t) + \eta_y \nabla_y f(\x_t, \y^*(\x_t))] \|^2 \\
&= \E\|\y_t - \y^*(\x_t)\|^2 + \eta_y^2 \E\|\nabla f(\x_t, \y_t) - \nabla_y f(\x_t, \y^*(\x_t))\|^2 
+ 2\eta_y \E\langle \y_t - \y^*(\x_t), \nabla f(\x_t, \y_t) \rangle\\
&\leq \E\|\y_t - \y^*(\x_t)\|^2 + \eta_y^2 \E\|\nabla f(\x_t, \y_t) - \nabla_y f(\x_t, \y^*(\x_t))\|^2 + 2\eta_y(f(\x_t, \y_t) - f(\x_t, \y^*(\x_t))\\
&\leq \E\|\y_t - \y^*(\x_t)\|^2 + \eta_y^2 \E\|\nabla f(\x_t, \y_t) - \nabla_y f(\x_t, \y^*(\x_t)) \|^2 -\eta_y \lambda \E\| \y_t - \y^*(\x_t) \|^2\\
&\leq (1- \frac{\eta_y\lambda}{2})\E\|\y_t - \y^*(\x_t)\|^2.  
\end{split} 
\end{equation}
where the first inequality uses the concavity of $f(\x, \cdot)$,  the second inequality is due to that the dual side $\lambda$-PL condition of $f(\x, \cdot)$ (Appendix A of \citep{karimi2016linear})
and the third inequality uses the setting $\eta_y \leq \lambda/(2L_f^2)$.

Then,
\begin{equation}
\begin{split} 
&\E\|\y_{t+1} - \y^*(\x_{t+1})\|^2 \leq 
(1+\frac{\eta_y \lambda}{4})\|\y_{t+1} - \y^*(\x_t)\|^2 + (1+\frac{4}{\eta_y\lambda})\|\y^*(\x_t) - \y^*(\x_{t+1})\|^2 \\
& \leq (1+ \frac{\eta_y \lambda}{4}) (1-\frac{\eta_y \lambda}{2}) \E\|\y_t - \y^*(\x_t)\|^2 + (1+\frac{\eta_y \lambda}{4}) \eta_y^2 \sigma^2 + \frac{8}{\eta_y\lambda} \kappa^2 
\|\x_t - \x_{t+1}\|^2 \\ 
& \leq (1-\frac{\eta_y \lambda}{4}) \E\|\y_t - \y^*(\x_t)\|^2
+ 2\eta_y^2 \sigma^2 + \frac{8\kappa^2}{\eta_y \lambda} \|\x_t - \x_{t+1}\|^2,
\end{split}
\end{equation} 
where the second inequality uses the Lemma \ref{lem:nouiheld}. 

\end{proof}


\section{Stochastic Non-Convex Bilevel Optimization}
Let $\Pi_{\Omega}$ denote a projection onto a convex set $\Omega$. With $\Omega_R=\{\x\in\R^d: \|\x\|\leq R\}$, we also use $\Pi_{R} = \Pi_{\Omega_R}$ for simplicity.  Let $\S^l_{\lambda}[X]$ denote a projection onto the set $\{X\in \R^{d\times d}: X\succeq \lambda I\}$, and let $\S_C^u[X]$ denote a projection onto the set $\{X\in \R^{d\times d}: \|X\|\leq C\}$. Both $\S^l_{\lambda}[X]$ and $\S^u_{C}[X]$ can be implemented by using singular value decomposition (SVD) and thresholding the singular values. Let $\circ$ denote an element-wise product. We denote by $\x^2$, $\sqrt{\x}$  an element-wise square and element-wise square-root, respectively.

In this section, we consider stochastic non-convex bilevel optimization in the following form: 
\begin{equation}\label{eqn:sbo_formulation}
\begin{split}
&\min_{\x\in\R^d}F(\x) = f(\x, \y^*(\x)), \quad s.t.\quad\y^*(\x)=\arg\min_{\y\in \Y}g(\x, \y),  
\end{split} 
\end{equation}
which satisfies the following assumption: 
\begin{assumption}\label{ass:5}
For $f, g$ we assume the following conditions hold 
\begin{itemize} [leftmargin=*] 
\item $g(\x,\cdot)$ is $\lambda$-strongly convex with respect to $\y$ for any fixed $\x$. 
\item $\nabla_x f(\x, \y)$ is $L_{fx}$-Lipschitz continuous, $\nabla_y f(\x, \y)$ is $L_{fy}$-Lipschitz continuous, $\nabla_y g(\x, \y)$ is $L_{gy}$-Lipschitz continuous, $\nabla_{xy}^2 g(\x, \y)$ is $L_{gxy}$-Lipschitz continuous, $\nabla_{yy}^2 g(\x, \y)$ is $L_{gyy}$-Lipschitz continuous, all respect to $(\x, \y)$.
\vspace{-0.05in}
\item  $\O_{fx},  \O_{fy},   \O_{gy}, \O_{gxy}, \O_{gyy}$ are unbiased stochastic oracles of  $\nabla_x f(\x, \y), \nabla_y f(\x, \y)$, $\nabla_y g(\x, \y)$, $\nabla_{xy}^2 g(\x, \y)$ and $\nabla_{yy}^2 g(\x, \y)$, and their variances are  by $\sigma^2$, and $0\preceq \O_{gyy}(\x,\y)\preceq C_{gyy} I$. 
\vspace{-0.05in}
\item  $\|\nabla_y f(\x, \y)\|^2 \leq C_{fy}^2$, $\|\nabla_{xy}^2 g(\x, \y)\|^2 \leq C_{gxy}^2$. 
\end{itemize}
\end{assumption}
{\bf Remark: }The above assumptions are similar to that assumed  in \citep{99401,DBLP:journals/corr/abs-2007-05170} except for an additional assumption that $\nabla_x f(\x, \y) $ is Lipschitz continuous with respect to $\x$ for fixed $\y$. Note that \citep{99401,DBLP:journals/corr/abs-2007-05170} have made implicitly a stronger assumption $\lambda I\preceq \O_{gyy}(\x,\y)\preceq C_{gyy} I$ (cf. the proof of Lemma 3.2 in~\citep{99401}). Our assumption regarding $\O_{gyy}(\x,\y)$, i.e.,  $0\preceq \O_{gyy}(\x,\y)\preceq C_{gyy} I$ is weaker. Indeed, we can also remove this assumption by sacrificing the per-iteration complexity. We present this result in Appendix  \ref{section:sbo_alter} for interesting readers. 

The proposed algorithm SMB is presented in Algorithm~\ref{alg:smb}. To understand the algorithm, we write the exact expression for the gradient of $F(\x)$, i.e.,  $\nabla F(\x) =  \nabla_x f(\x, \y^*(\x))  - \nabla_{xy}^2 g(\x, \y^*(\x))[\nabla_{yy}^2g(\x, \y^*(\x))]^{-1}\nabla_y f(\x, \y^*(\x))$ \citep{99401}. At each iteration with $(\x_t, \y_t)$, we can first approximate $\y^*(\x_t)$  by $\y_t$ and define $\nabla F(\x_t, \y_t) = \nabla_x f(\x_t, \y_t) - \nabla^2_{xy}g(\x_t, \y_t)[\nabla_{yy}^2g(\x_t, \y_t)]^{-1}\nabla_y f(\x_t, \y_t)$ as an approximate of $\nabla F(\x_t)$. Except for $[\nabla_{yy}^2g(\x_t, \y_t)]^{-1}$ other components of $\nabla F(\x_t, \y_t)$ have an unbiased estimator based on stochastic oracles. For estimating $[\nabla_{yy}^2g(\x_t, \y_t)]^{-1}$, we use the biased estimator proposed in \citep{99401}, which is given by $h_{t+1}$ in step 3 of SMB, where $k_t$ is a logarithmic number meaning that a logarithmic calls of $\O_{gyy}$ is required at each iteration. Hence, we have a biased estimator of $\nabla F(\x_t, \y_t)$ by $\O_{fx}(\x_t, \y_t) - \O_{gxy}(\x_t,\y_t) h_{t+1} \O_{fy}(\x_t, \y_t))$. To further reduce the variance of this estimator, we apply  the SEMA estimator on top of it as in step 4 of SMB.

\begin{algorithm}[t]
	\caption{
	Stochastic Momentum method for Bilevel Optimization (SMB)}\label{alg:smb}
	\begin{algorithmic}[1]
\State{Input: $\x_0\in \R^d, \y_0 \in\R^{d'}$, $\h_0 = \frac{k_0}{C_{gyy}}\prod_{i=1}^p(I - \frac{1}{C_{gyy}}\O_{gyy, i}(\x_0, \y_0))$, where $p$ is uniformly sampled from $1, \ldots, k_0$, and
$\z_0=\O_{fx}(\x_0, \y_0) - \O_{gxy}(\x_0,\y_0) h_{0} \O_{fy}(\x_0, \y_0)$.}   
\FOR{$t=0, 1, ..., T$}
\State ~~~~Uniformly sample $p$ from $1, \ldots, k_t$
\State ~~~~$h_{t+1}  = \frac{k_t}{C_{gyy}}\prod_{i=1}^p(I - \frac{1}{C_{gyy}}\O_{gyy, i}(\x_t, \y_t))$
\State ~~~~$\z_{t+1}= \beta\z_t + (1-\beta) (\O_{fx}(\x_t, \y_t) - \O_{gxy}(\x_t,\y_t) h_{t+1} \O_{fy}(\x_t, \y_t))$,
\State ~~~~$\x_{t+1} = \x_t - \eta_x\z_{t+1}$
\State {~~~~$\y_{t+1} = \Pi_{\Y}[\y_t + \eta_y \O_{gy}(\x_{t}, \y_t)]$} 
\ENDFOR 
\end{algorithmic}
\end{algorithm}

We have the following convergence regarding SMB. We denote $\Delta_t =  \|\z_{t+1} - \nabla F(\x_t)\|^2$.
\begin{theorem}\label{thm:5}
Let  $ F(\x_0) - F_*\leq \Delta_F$. Suppose Assumption~\ref{ass:5} holds.  By setting $1-\beta=\gamma =O\left(\frac{1}{\sqrt{T}}\right)$, $\eta_x=O\left(\frac{1}{\sqrt{T}}\right)$,  
$\eta_y =O\left(\frac{1}{\sqrt{T}}\right)$, and $k_t = O(\ln T)$ 
we have 
\begin{align*}
\E\left[\frac{1}{T+1}\sum_{t=0}^T \|\nabla F(\x_t)\|^2\right] \leq O\left(\frac{1}{\sqrt{T}}\right), \quad \E\left[\frac{1}{T+1}\sum_{t=0}^T\Delta_{t}\right]\leq O\left(\frac{1}{\sqrt{T}}\right).  
\end{align*} 
\end{theorem}
\vspace*{-0.1in}
{\bf Remark:} The total oracle complexity is $O(\sum_t k_t) =  O(T \ln T)$, which is $\widetilde O(1/\epsilon^4)$ to acheive an $\epsilon$-stationary point. It is not difficult to extend SMB to its adaptive variant by using the similar step size for updating $\x_{t+1}$ as in Algorithm~\ref{alg:pdadam}.

We develop the analysis of Theorem \ref{thm:5} in the following. 
\begin{lemma}[Lemma 2.2, \citep{99401}]
Under Assumption \ref{ass:5}, we have
\begin{equation}
\begin{split}
\|\y^*(\x) - \y^*(\x')\| \leq L_y \|\x - \x'\|,
\end{split}
\end{equation}
where $L_y$ is an appropriate constant.
\label{lem:sbo_lip_y}
\end{lemma}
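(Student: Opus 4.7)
The plan is to prove this Lipschitz property via the first-order optimality condition for $\y^*(\x)$ combined with the $\lambda$-strong convexity of $g(\x, \cdot)$. Since $g(\x, \cdot)$ is strongly convex and $\y^*(\x) = \arg\min_{\y \in \Y} g(\x, \y)$, the minimizer is unique and satisfies the variational inequality $\langle \nabla_y g(\x, \y^*(\x)), \y - \y^*(\x) \rangle \geq 0$ for every $\y \in \Y$ (or simply $\nabla_y g(\x, \y^*(\x)) = 0$ in the unconstrained case, which is the main idea). The proof works equally well in the constrained setting by adding the two optimality inequalities together.

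First I would write down the two optimality conditions at $\x$ and at $\x'$, using $\y^*(\x')$ as the test point in the first and $\y^*(\x)$ as the test point in the second, and add them to obtain
\begin{equation*}
\langle \nabla_y g(\x, \y^*(\x)) - \nabla_y g(\x', \y^*(\x')),\ \y^*(\x') - \y^*(\x) \rangle \geq 0.
\end{equation*}
Next I would split the inner product by inserting $\pm \nabla_y g(\x', \y^*(\x))$, which produces a strong-monotonicity term in $\y$ from the strong convexity of $g(\x', \cdot)$ and a cross term measuring how $\nabla_y g$ changes in $\x$. Specifically, $\lambda$-strong convexity gives
\begin{equation*}
\langle \nabla_y g(\x', \y^*(\x)) - \nabla_y g(\x', \y^*(\x')),\ \y^*(\x) - \y^*(\x') \rangle \geq \lambda \|\y^*(\x) - \y^*(\x')\|^2,
\end{equation*}
while the assumption $\|\nabla_{xy}^2 g(\x,\y)\| \leq C_{gxy}$ together with the fundamental theorem of calculus yields
\begin{equation*}
\|\nabla_y g(\x, \y^*(\x)) - \nabla_y g(\x', \y^*(\x))\| \leq C_{gxy} \|\x - \x'\|.
\end{equation*}

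Combining these with Cauchy--Schwarz gives $\lambda \|\y^*(\x) - \y^*(\x')\|^2 \leq C_{gxy} \|\x - \x'\| \cdot \|\y^*(\x) - \y^*(\x')\|$, so $L_y = C_{gxy}/\lambda$ works. I do not foresee a real obstacle: the only mild subtlety is handling the constrained minimization case, where one has to argue with the variational inequality rather than $\nabla_y g(\x, \y^*(\x)) = 0$, but adding the two inequalities makes the projection contributions drop out cleanly. The bound $\|\nabla_{xy}^2 g\| \leq C_{gxy}$ from Assumption~\ref{ass:5} is exactly what is needed for the cross term, so no additional hypotheses are required.
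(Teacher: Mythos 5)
Your proof is correct. Note that the paper does not supply its own proof of this lemma; it simply cites it as Lemma~2.2 of \citep{99401} (Ghadimi and Wang), with $L_y$ left as ``an appropriate constant.'' The argument you give — add the two variational inequalities for $\y^*(\x)$ and $\y^*(\x')$, split the resulting inner product by inserting $\pm\nabla_y g(\x',\y^*(\x))$, invoke $\lambda$-strong monotonicity of $\nabla_y g(\x',\cdot)$ for the first piece, bound the cross term by $C_{gxy}\|\x-\x'\|$ via the cross-Hessian bound $\|\nabla_{xy}^2 g\|\leq C_{gxy}$ from Assumption~\ref{ass:5}, and finish with Cauchy--Schwarz — is precisely the standard proof underlying the cited result, and it correctly yields $L_y = C_{gxy}/\lambda$. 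Your remark that the variational-inequality form handles the constrained case without extra work is also right. No gap.
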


We generalize Lemma \ref{lem:3} to the bilevel problem as:
\begin{lemma}
\label{lem:sbo_1}
Considering the update in Algorithm \ref{alg:smb} where $\x_{t+1} = \x_t - \eta_x \z_{t+1} $, with $\eta_x \leq 1/(2L_F)$, we have
\begin{equation*}
\begin{split}
F(\x_{t+1}) \leq F(\x_t) + \frac{\eta_x}{2} \|\nabla F(\x_t) - \z_{t+1}\|^2 - \frac{\eta_x}{2} \|\nabla F(\x_t)\|^2 - \frac{\eta_x}{4} \|\z_{t+1}\|^2. 
\end{split}
\end{equation*}
\end{lemma}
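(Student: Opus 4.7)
\textbf{Proof proposal for Lemma \ref{lem:sbo_1}.} The statement is the non-adaptive counterpart of Lemma~\ref{lem:3}, obtained by specializing the coordinate-wise step size factor to the identity (i.e.\ effectively $c_l = c_u = 1$, $\widetilde{\eta}_t = \eta_x$). Consequently, my plan is to reproduce the argument of Lemma~\ref{lem:3} almost verbatim, the only prerequisite being the $L_F$-smoothness of $F(\x) = f(\x,\y^*(\x))$, which is standard for bilevel problems under Assumption~\ref{ass:5} and is invoked throughout the surrounding analysis.

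The plan, step by step, is as follows. First, I would invoke the descent lemma for $L_F$-smooth functions:
\[
F(\x_{t+1}) \;\leq\; F(\x_t) + \nabla F(\x_t)^\top(\x_{t+1}-\x_t) + \tfrac{L_F}{2}\|\x_{t+1}-\x_t\|^2.
\]
Next, I would substitute the update $\x_{t+1}-\x_t = -\eta_x \z_{t+1}$ to get
\[
F(\x_{t+1}) \;\leq\; F(\x_t) - \eta_x\,\nabla F(\x_t)^\top \z_{t+1} + \tfrac{L_F \eta_x^2}{2}\|\z_{t+1}\|^2.
\]
Third, I would apply the polarization identity $-2a^\top b = \|a-b\|^2 - \|a\|^2 - \|b\|^2$ with $a = \nabla F(\x_t)$, $b = \z_{t+1}$ to rewrite the cross term, yielding
\[
F(\x_{t+1}) \;\leq\; F(\x_t) + \tfrac{\eta_x}{2}\|\nabla F(\x_t) - \z_{t+1}\|^2 - \tfrac{\eta_x}{2}\|\nabla F(\x_t)\|^2 + \tfrac{\eta_x(L_F\eta_x - 1)}{2}\|\z_{t+1}\|^2.
\]
Finally, I would use the step-size condition $\eta_x \leq 1/(2L_F)$ to ensure $L_F\eta_x - 1 \leq -1/2$, so that the coefficient of $\|\z_{t+1}\|^2$ is at most $-\eta_x/4$, matching the claimed inequality.

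There is really no main obstacle here: the argument is a three-line consequence of smoothness and the polarization identity, and the step-size threshold $\eta_x \leq 1/(2L_F)$ is exactly what is needed to close out the quadratic term. The only substantive ingredient that is not algebraic is the $L_F$-smoothness of $F$, which I would simply cite from prior bilevel results (e.g.\ the constant $L_F$ is specified in Lemma~\ref{lem:sbo_var_1} as referenced in Theorem~\ref{thm:5}). In short, the proof is a direct transcription of Lemma~\ref{lem:3} with the adaptive factor replaced by $1$, so I would present it compactly without further machinery.
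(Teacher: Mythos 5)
Your proof is correct and takes essentially the same approach the paper implicitly relies on: it is the $c_l=c_u=1$ specialization of the argument in Lemma~\ref{lem:3} (descent lemma, substitute the update, polarization identity, and absorb the quadratic term using the step-size bound). The paper does not spell out a separate proof for this lemma, and your three-line derivation is exactly the intended one.
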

Lemma \ref{lem:03} can also be generalized to the bilevel problem as: 
\begin{lemma}  \label{lem:sbo_y_recursion} 
Suppose Assumption \ref{ass:5} holds. With $\y_{t+1} = \Pi_{\mathcal{Y}} [\y_t +\eta_y \O_{gy}(\x_{t}, \y_t)]$, $\eta_y \leq \lambda$ and $L_y$ as specified in Lemma \ref{lem:sbo_lip_y},
\begin{equation*}
\begin{split}
\E\|\y_{t+1} - \y^*(\x_{t+1})\|^2 \leq  (1-\frac{\eta_y  \lambda}{2}) \E\|\y_t - \y^*(\x_t)\|^2
+ 2\eta_y^2 \sigma^2 + \frac{4 L_y^2}{\eta_y \lambda}  \|\x_{t+1} - \x_t\|^2.
\end{split} 
\end{equation*}
\end{lemma}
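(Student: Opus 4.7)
The plan is to adapt the proof of Lemma \ref{lem:03} (the strongly-concave min-max case) almost verbatim, replacing the role of $-f(\x_t,\cdot)$ being $\lambda$-strongly convex by $g(\x_t,\cdot)$ being $\lambda$-strongly convex, and replacing the $\kappa$-Lipschitz property of $\y^*$ with the $L_y$-Lipschitz property guaranteed by Lemma \ref{lem:sbo_lip_y}. The argument has two natural pieces: first bound $\E\|\y_{t+1}-\y^*(\x_t)\|^2$ showing contraction toward the fixed point at the current $\x_t$, and then pay a small Young-inequality penalty to switch the target from $\y^*(\x_t)$ to $\y^*(\x_{t+1})$.

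For the first piece I would use the fact that $\y^*(\x_t)$ is a fixed point of the (deterministic) projected gradient map, i.e. $\y^*(\x_t)=\Pi_{\Y}[\y^*(\x_t)+\eta_y\nabla_y g(\x_t,\y^*(\x_t))]$ when interpreted with the sign used in the algorithm. Combining this with the update rule and the nonexpansiveness of $\Pi_{\Y}$ gives
\begin{equation*}
\|\y_{t+1}-\y^*(\x_t)\|^2\le \bigl\|\,\y_t-\y^*(\x_t)+\eta_y\bigl(\O_{gy}(\x_t,\y_t)-\nabla_y g(\x_t,\y^*(\x_t))\bigr)\bigr\|^2.
\end{equation*}
Expanding the square and taking conditional expectation, the noise/gradient cross term vanishes, leaving a deterministic term plus $\eta_y^2\sigma^2$. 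In the deterministic term, strong convexity of $g(\x_t,\cdot)$ provides the monotonicity lower bound $\langle \y_t-\y^*(\x_t),\nabla_y g(\x_t,\y_t)-\nabla_y g(\x_t,\y^*(\x_t))\rangle\ge \lambda\|\y_t-\y^*(\x_t)\|^2$, and $L_{gy}$-smoothness bounds the quadratic $\|\nabla_y g(\x_t,\y_t)-\nabla_y g(\x_t,\y^*(\x_t))\|^2$ by $L_{gy}^2\|\y_t-\y^*(\x_t)\|^2$. Under the step-size choice $\eta_y\le\lambda/L_{gy}^2$ (implied by the theorem's calibration of $\eta_y$), these combine to yield $\E\|\y_{t+1}-\y^*(\x_t)\|^2\le(1-\eta_y\lambda)\|\y_t-\y^*(\x_t)\|^2+\eta_y^2\sigma^2$.

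For the second piece I would apply the standard $(1+a,1+1/a)$ Young inequality with $a=\eta_y\lambda/2$:
\begin{equation*}
\E\|\y_{t+1}-\y^*(\x_{t+1})\|^2\le\Bigl(1+\tfrac{\eta_y\lambda}{2}\Bigr)\E\|\y_{t+1}-\y^*(\x_t)\|^2+\Bigl(1+\tfrac{2}{\eta_y\lambda}\Bigr)\E\|\y^*(\x_t)-\y^*(\x_{t+1})\|^2,
\end{equation*}
then invoke Lemma \ref{lem:sbo_lip_y} to bound the last term by $L_y^2\|\x_t-\x_{t+1}\|^2$. Using $(1+\eta_y\lambda/2)(1-\eta_y\lambda)\le 1-\eta_y\lambda/2$, absorbing $(1+\eta_y\lambda/2)\eta_y^2\sigma^2\le 2\eta_y^2\sigma^2$ via $\eta_y\lambda\le 1$, and $(1+2/(\eta_y\lambda))\le 4/(\eta_y\lambda)$, the desired inequality drops out.

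I do not expect a genuine obstacle: every ingredient (nonexpansiveness of $\Pi_\Y$, fixed-point characterization of $\y^*(\x_t)$, strong convexity/smoothness of $g(\x_t,\cdot)$, Lipschitz continuity of $\y^*$) is either in Assumption \ref{ass:5} or in Lemma \ref{lem:sbo_lip_y}. The only point requiring mild care is the sign convention in the $\y$-update, which must be reconciled with the fact that $g(\x,\cdot)$ is being minimized, but this is cosmetic and does not alter the contraction rate $1-\eta_y\lambda/2$ or the constants appearing in the statement.
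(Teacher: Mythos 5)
Your proposal is correct and matches the paper's intended argument exactly: the paper does not spell out a separate proof for this lemma but states that it is a direct generalization of Lemma~\ref{lem:03}, and your adaptation---fixed-point characterization of $\y^*(\x_t)$ under the projected gradient map, nonexpansiveness of $\Pi_{\Y}$, vanishing of the noise cross term, strong monotonicity from $\lambda$-strong convexity of $g(\x_t,\cdot)$, $L_{gy}$-smoothness of $\nabla_y g$, the $(1+a,1+1/a)$ Young step, and the $L_y$-Lipschitz bound from Lemma~\ref{lem:sbo_lip_y}---is precisely that generalization. You also correctly flag the two places where the paper is imprecise: the sign of the $\y$-update in Algorithm~\ref{alg:smb} must be a descent step for $g$, and the contraction requires $\eta_y \le \lambda/L_{gy}^2$ (mirroring the $\eta_y \le \lambda/L_f^2$ condition in Lemma~\ref{lem:03}) rather than the weaker $\eta_y \le \lambda$ as written in the lemma statement; neither affects the constants in the final bound.
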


We need the following lemma to bound the residual between $h_{t+1}$ and $[\nabla_{yy}^2 g(\x_t, \y_t)]^{-1}$. 

\begin{lemma}Under Assumption \ref{ass:5} and considering update of Algorithm \ref{alg:smb}, we have
\begin{align}
& \|\E[h_{t+1}] - [\nabla_{yy}^2 g(\x_t, \y_t)]^{-1}\| \leq \frac{1}{\lambda} \left(1-\frac{\lambda}{C_{gyy}}\right)^{k_t} \label{equ:ghadimi_h_to_g1}\\
&\|h_{t+1}\| \leq \frac{k_t}{C_{gyy}},  \|[\nabla_{yy}^2 g(\x_t, \y_t)]^{-1} - h_{t+1} \| \leq \frac{1}{\lambda} + \frac{k_t}{C_{gyy}}. \label{equ:ghadimi_h_to_g2}
\end{align} 
\label{lem:ghadimi_h_to_g}
\end{lemma}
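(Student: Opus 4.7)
The plan is to use the Neumann series representation of the inverse Hessian and exploit the way $h_{t+1}$ is constructed as an unbiased estimator of a truncation of this series.

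First I would observe that Assumption~\ref{ass:5} gives both $\lambda I \preceq \nabla^2_{yy} g(\x_t,\y_t)$ (from strong convexity of $g(\x,\cdot)$) and $\nabla^2_{yy} g(\x_t,\y_t) = \E[\O_{gyy}(\x_t,\y_t)] \preceq C_{gyy} I$ (since the PSD ordering is preserved by expectation). Writing $A = \nabla^2_{yy} g(\x_t, \y_t)$ and $B = I - A/C_{gyy}$, this yields $0 \preceq B \preceq (1 - \lambda/C_{gyy}) I$, so the Neumann expansion
\begin{equation*}
A^{-1} = \frac{1}{C_{gyy}} \sum_{i=0}^{\infty} B^i
\end{equation*}
converges with spectral-norm ratio at most $1 - \lambda/C_{gyy}$.

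Next I would compute $\E[h_{t+1}]$ in two steps. Conditioning on $p$ and using that the Hessian samples $\O_{gyy,i}(\x_t,\y_t)$ are mutually independent and each has mean $A$, the expectation of the product factorizes:
\begin{equation*}
\E\Bigl[\prod_{i=1}^p \bigl(I - \tfrac{1}{C_{gyy}} \O_{gyy,i}(\x_t,\y_t)\bigr) \,\Big|\, p\Bigr] = B^p.
\end{equation*}
Averaging over $p$ (uniform on the index set used to define $h_{t+1}$) produces a partial sum of the Neumann series, and the first inequality \eqref{equ:ghadimi_h_to_g1} then follows from bounding the geometric tail:
\begin{equation*}
\Bigl\| A^{-1} - \E[h_{t+1}] \Bigr\| \le \frac{1}{C_{gyy}} \sum_{i=k_t}^{\infty} \bigl(1 - \tfrac{\lambda}{C_{gyy}}\bigr)^i = \frac{1}{\lambda}\bigl(1 - \tfrac{\lambda}{C_{gyy}}\bigr)^{k_t}.
\end{equation*}

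For \eqref{equ:ghadimi_h_to_g2}, the key observation is that $0 \preceq \O_{gyy,i}(\x_t,\y_t) \preceq C_{gyy} I$ implies $0 \preceq I - \O_{gyy,i}(\x_t,\y_t)/C_{gyy} \preceq I$, so each factor in the product has spectral norm at most $1$ almost surely; the leading coefficient $k_t/C_{gyy}$ then gives $\|h_{t+1}\| \le k_t/C_{gyy}$ deterministically. Combining this with $\|A^{-1}\| \le 1/\lambda$ via the triangle inequality yields the last bound. The main subtlety is making sure the product-of-PSD-contractions argument is applied to the spectral norm correctly (the factors need not commute in general, but each is symmetric with operator norm $\le 1$, so the product has operator norm $\le 1$); I would spell this out briefly rather than just invoke submultiplicativity.
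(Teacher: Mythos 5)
Your argument is correct and, for the second display, coincides with the paper's own proof: both bound $\|h_{t+1}\|$ by pulling the spectral norm inside the product (each symmetric factor $I - \O_{gyy,i}/C_{gyy}$ is a contraction under $0 \preceq \O_{gyy,i} \preceq C_{gyy} I$) and then apply the triangle inequality with $\|[\nabla_{yy}^2 g]^{-1}\| \le 1/\lambda$. For the first display, the paper simply cites Lemma~3.2 of the reference~[99401], whereas you reconstruct the underlying Neumann-series argument from scratch: conditioning on $p$, factoring the expectation of the product by independence, and bounding the geometric tail. This is a harmless and arguably more self-contained presentation. One small point worth flagging: your tail bound $\frac{1}{C_{gyy}}\sum_{i=k_t}^\infty (1-\lambda/C_{gyy})^i$ is obtained only if the partial sum retained in $\E[h_{t+1}]$ is $\sum_{p=0}^{k_t-1}B^p$, i.e.\ if $p$ is drawn uniformly from $\{0,\dots,k_t-1\}$; Algorithm~\ref{alg:smb} literally writes $p$ uniform over $\{1,\dots,k_t\}$, which would leave an $I/C_{gyy}$ term in the difference and break the bound. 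The convention $\{0,\dots,k_t-1\}$ (with $p=0$ giving the empty product $I$) is what the cited Ghadimi--Wang lemma actually uses and is what makes the stated inequality true, so your computation tacitly (and correctly) adopts that indexing; it would be worth stating this explicitly rather than leaving it to the reader to notice the off-by-one.
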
 

\begin{proof}[Proof of Lemma \ref{lem:ghadimi_h_to_g}]
The (\ref{equ:ghadimi_h_to_g1}) is proven in Lemma 3.2 of \citep{99401}. 
Under Assumption \ref{ass:5}, we have
\begin{equation*}
\begin{split}
\|h_{t+1}\| \leq \frac{k_t}{C_{gyy}} \prod\limits_{i=1}^{p} \|I-\frac{1}{C_{gyy}} \O_{gyy,i}(\x_t, \y_t) \| \leq \frac{k_t}{C_{gyy}}.
\end{split} 
\end{equation*}
Thus,
\begin{equation*}
\begin{split}
\|[\nabla_{yy}^2 g(\x_t, \y_t)]^{-1} - h_{t+1} \| \leq
\|\nabla_{yy}^2 g(\x_t, \y_t)]^{-1}\| + \|h_{t+1}\|
\leq \frac{1}{\lambda} + \frac{k_t}{C_{gyy}}.
\end{split}
\end{equation*}
\end{proof}

We bound $\| \nabla F(\x_t) - \z_{t+1}\|^2$ in the following lemma.
\begin{lemma}\label{lem:sbo_var_1}  
For all $t\geq 0$, we have  
\begin{equation*} 
\begin{split} 
&\E\|\z_{t+1} - \nabla F(\x_t, \y^*(\x_t)) \|^2
\leq (1-\frac{\gamma}{2})\E\|\z_t - \nabla F(\x_{t-1}, \y^*(\x_{t-1}))\|^2 \\
&+ 8\gamma C_{gxy}^2 \frac{1}{\lambda^2}(1-\frac{\lambda}{C_{gyy}})^{2k_t} + 8\gamma C_0 \E\|\y_t - \y^*(\x_t)\|^2
+ \gamma^2 C_1 + \frac{2}{\gamma}L_F^2 \E\|\x_{t} - \x_{t-1}\|^2, 
\end{split}
\end{equation*}
where $L_F:=\left(2L_{fx}^2 + \frac{6 C_{gxy}^2 L_{fy}^2}{\lambda^2} + \frac{6 C_{gxy}^2 L_{gyy}^2 C_{fy}^2 }{\lambda^4} + \frac{6L_{gxy}^2 C_{fy}^2 }{\lambda^2}\right)(1+L_y^2)$, $C_0:=(2L_{fx}^2 +  \frac{6C_{fy}^2L_{gxy}^2}{\lambda^2} + \frac{6C_{fy}^2C^2_{gxy}L^2_{gyy}}{\lambda^4}+ \frac{6L_{fy}^2C^2_{gxy}}{\lambda^2})$, $C_1:=2\sigma^2 +  6\sigma^2\frac{k_t^2}{C_{gyy}^2}(C_{fy}^2+\sigma^2) + 24C_{gxy}^2\frac{k_t^2}{C_{gyy}^2}(C_{fy}^2+\sigma^2) + 6 C_{gxy}^2 \frac{k^2_t}{C^2_{gyy}}\sigma^2$ and $L_y$ is as in Lemma \ref{lem:sbo_lip_y}.   
\end{lemma}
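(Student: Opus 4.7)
The plan is to mimic the proof of Lemma~\ref{lem:var_x}, upgraded to handle the biased Neumann-series estimator of the hyper-gradient. Let $G(\x,\y) := \nabla_x f(\x,\y) - \nabla_{xy}^2 g(\x,\y)[\nabla_{yy}^2 g(\x,\y)]^{-1}\nabla_y f(\x,\y)$ so that $\nabla F(\x)=G(\x,\y^*(\x))$, and let $\hat G_t := \O_{fx}(\x_t,\y_t) - \O_{gxy}(\x_t,\y_t)\,h_{t+1}\,\O_{fy}(\x_t,\y_t)$ be the estimator used in Algorithm~\ref{alg:smb}, so $\z_{t+1} = (1-\gamma)\z_t + \gamma \hat G_t$. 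Introducing the drift $e_t := (1-\gamma)(\nabla F(\x_t)-\nabla F(\x_{t-1}))$, I would write
$$\z_{t+1} - \nabla F(\x_t) + e_t = (1-\gamma)\bigl(\z_t - \nabla F(\x_{t-1})\bigr) + \gamma\bigl(\hat G_t - \nabla F(\x_t)\bigr),$$
take the conditional second moment, and finally apply $\|a\|^2 \le (1+\gamma)\|a+e_t\|^2 + (1+1/\gamma)\|e_t\|^2$ with $a = \z_{t+1}-\nabla F(\x_t)$. The drift bound $\|e_t\|^2 \le (1-\gamma)^2 L_F^2 \|\x_t-\x_{t-1}\|^2$ follows from the $L_F$-Lipschitzness of $\nabla F$: unroll $\nabla F(\x)-\nabla F(\x')$ through the product rule on $G$, use the $L_y$-Lipschitzness of $\y^*$ from Lemma~\ref{lem:sbo_lip_y}, Assumption~\ref{ass:5}, and $\|[\nabla_{yy}^2 g]^{-1}\|\le 1/\lambda$, then square and apply Cauchy–Schwarz to recover the stated form of $L_F$.

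After conditional expectation three pieces appear: the carried-over term $(1-\gamma)^2\|\z_t-\nabla F(\x_{t-1})\|^2$, the stochastic term $\gamma^2\E_t\|\hat G_t-\nabla F(\x_t)\|^2$, and the cross term $2\gamma(1-\gamma)\langle \z_t-\nabla F(\x_{t-1}),\, \E_t[\hat G_t]-\nabla F(\x_t)\rangle$. The cross term is nonzero because $h_{t+1}$ is biased; I would decompose $\E_t[\hat G_t]-\nabla F(\x_t) = (\E_t[\hat G_t]-G(\x_t,\y_t)) + (G(\x_t,\y_t)-G(\x_t,\y^*(\x_t)))$, bounding the first by $(C_{gxy} C_{fy}/\lambda)(1-\lambda/C_{gyy})^{k_t}$ via Lemma~\ref{lem:ghadimi_h_to_g} and the second by the Lipschitzness of $G$ in $\y$ times $\|\y_t-\y^*(\x_t)\|$, then applying Young with weight $\gamma/4$ so that $(1-\gamma)^2+\gamma/4 \le 1-\gamma/2$ on the carried-over term while producing the announced $O(\gamma)$ bias-squared and $\|\y_t-\y^*(\x_t)\|^2$ contributions. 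For the stochastic piece I would split $\E_t\|\hat G_t - \nabla F(\x_t)\|^2 \le 2\E_t\|\hat G_t - G(\x_t,\y_t)\|^2 + 2\|G(\x_t,\y_t)-G(\x_t,\y^*(\x_t))\|^2$, and expand the first summand via
$$\O_{gxy} h_{t+1}\O_{fy}-\nabla_{xy}^2 g\,\E_t[h_{t+1}]\,\nabla_y f = (\O_{gxy}-\nabla_{xy}^2 g)h_{t+1}\O_{fy} + \nabla_{xy}^2 g(h_{t+1}-\E_t[h_{t+1}])\O_{fy} + \nabla_{xy}^2 g\,\E_t[h_{t+1}](\O_{fy}-\nabla_y f),$$
bounding each piece using $\|h_{t+1}\|\le k_t/C_{gyy}$ (Lemma~\ref{lem:ghadimi_h_to_g}), $\|\nabla_{xy}^2 g\|\le C_{gxy}$, $\E_t\|\O_{fy}\|^2\le C_{fy}^2+\sigma^2$, and the $\sigma^2$ variance bounds, recovering $C_1$ term by term.

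The main obstacle is booking the bias correctly. Because the hyper-gradient oracle is biased I cannot simply invoke Lemma~\ref{lem:sema_mengdi}; the cross inner product does not vanish and must be absorbed by Young's inequality with weights chosen so that simultaneously (i) the carried-over coefficient improves from $(1-\gamma)^2$ to $1-\gamma/2$, (ii) a linear-in-$\gamma$ bias-squared term of the right scale $C_{gxy}^2/\lambda^2\cdot(1-\lambda/C_{gyy})^{2k_t}$ appears, and (iii) a $O(\gamma)\|\y_t-\y^*(\x_t)\|^2$ term emerges with the constant $C_0$ named in the lemma. A subtler issue is the variance of the triple product $\O_{gxy}h_{t+1}\O_{fy}$: because these three factors are drawn from fresh, independent oracle calls at $(\x_t,\y_t)$, the cross expectations factor and the three-term expansion above is tight — but I must check that the independence across $\O_{gxy},\O_{fy},\O_{gyy,i}$ in Algorithm~\ref{alg:smb} is genuine before the per-term bounds $\|h_{t+1}-\E_t[h_{t+1}]\|^2 \le k_t^2/C_{gyy}^2$ combine into the stated $C_1$. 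Once these choices are in hand, the three pieces assemble into the displayed recursion and the proof closes.
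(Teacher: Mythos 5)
Your proposal is essentially correct and tracks the paper's proof in all the key ingredients: the drift vector $e_t=(1-\gamma)(\nabla F(\x_t)-\nabla F(\x_{t-1}))$, its bound by $L_F^2\|\x_t-\x_{t-1}\|^2$ via the product rule on $G$ together with the $L_y$-Lipschitzness of $\y^*$, the split of the mismatch between $\hat G_t$'s conditional mean and $\nabla F(\x_t)$ into a Neumann-series bias piece (bounded via Lemma~\ref{lem:ghadimi_h_to_g}) and a $\|\y_t-\y^*(\x_t)\|^2$ piece (bounded by the Lipschitzness of $G$ in $\y$), and the three-term telescoping of $\O_{gxy}h_{t+1}\O_{fy}-\nabla_{xy}^2 g\,\E[h_{t+1}]\nabla_y f$ to control the variance and recover $C_1$. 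The one place where you diverge from the paper is tactical rather than substantive: the paper defines $\widehat\nabla F(\x_t,\y_t):=\nabla_x f - \nabla_{xy}^2 g\,\E_t[h_{t+1}]\,\nabla_y f = \E_t[\hat G_t]$ \emph{before} expanding the square, writing $\z_{t+1}-\nabla F(\x_t)+e_t = (1-\gamma)(\z_t-\nabla F(\x_{t-1})) + \gamma(\hat G_t-\widehat\nabla F) + \gamma(\widehat\nabla F-\nabla F(\x_t))$; because $\hat G_t-\widehat\nabla F$ has conditional mean zero and the other two summands are measurable given $(\x_t,\y_t)$, the cross terms vanish exactly and no Young step is needed to kill them. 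You instead keep $\gamma(\hat G_t-\nabla F(\x_t))$ whole, accept the nonzero cross term $2\gamma(1-\gamma)\langle\z_t-\nabla F(\x_{t-1}),\,\E_t[\hat G_t]-\nabla F(\x_t)\rangle$, and absorb it by Young with a small weight so that $(1-\gamma)^2+O(\gamma)\le 1-\gamma/2$. That is workable (and yields the same $O(\gamma)$-scaled bias and $\delta_y$ terms), just with slightly heavier bookkeeping. Two small points to tighten when writing it out: your variance split $\E\|\hat G_t-\nabla F(\x_t)\|^2 \le 2\E\|\hat G_t-G(\x_t,\y_t)\|^2 + 2\|G(\x_t,\y_t)-\nabla F(\x_t)\|^2$ centers the stochastic piece on $G(\x_t,\y_t)$ rather than on $\E_t[\hat G_t]$, so a further split is needed to peel off the bias squared $\|\E_t[\hat G_t]-G(\x_t,\y_t)\|^2$ before your three-term telescoping (which targets deviation from $\E_t[h_{t+1}]$, not from $[\nabla_{yy}^2 g]^{-1}$) applies; and your caution about the independence of the fresh oracle calls $\O_{fx},\O_{gxy},\O_{fy},\O_{gyy,i}$ within an iteration is well placed — the $C_1$ bound implicitly factors expectations of the triple product, and without that independence one would need higher-moment conditions.
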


\begin{proof}[Proof of Lemma \ref{lem:sbo_var_1}]
First, note that it has shown in the bilevel optimization literature \citep{99401} that  
\begin{equation}
\begin{split}
\nabla F(\x) &= \nabla_x f(\x, \y^*(\x)) + \nabla \y^*(\x)^\top \nabla_y f(\x, \y^*(\x)) \\
&= \nabla_x f(\x, \y^*(\x)) - \nabla_{xy}^2 g(\x, \y^*(\x))[\nabla_{yy}^2 g(\x, \y^*(\x)) ]^{-1} \nabla_y f(\x, \y^*(\x)), 
\end{split}
\label{eq:sbo_nabla_F}
\end{equation}
and denote 
\begin{equation}
\begin{split}
\nabla F(\x_t, \y_t) := \nabla_x f(\x_t, \y_t) - \nabla_{xy}^2 g(\x_t, \y_t)[\nabla_{yy}^2 g(\x_t, \y_t)]^{-1} \nabla_y f(\x_t, \y_t). 
\end{split}
\label{eq:sbo_nabla_F_x_y}
\end{equation}

Let $e_t :=(1-\gamma)(\nabla F(\x_t) - \nabla F(\x_{t-1})) = (1-\gamma)(\nabla_x  F(\x_t, \y^*(\x_t)) - \nabla_x F(\x_{t-1}, \y^*(\x_{t-1})))$. 
We have 
\begin{small}
\begin{equation}
\small
\begin{split}
&\|e_t\| \leq (1-\gamma) \|\nabla F(\x_t) - \nabla F(\x_{t-1})\| \\ 
&\leq \|[\nabla_x f(\x_t, \y^*(\x_t)) - \nabla_{xy}^2 g(\x_t, \y^*(\x_t))[\nabla_{yy}^2 g(\x_t, \y^*(\x_t)) ]^{-1} \nabla_y f(\x_t, \y^*(\x_t))] \\
&~~~ - [\nabla_x f(\x_{t-1}, \y^*(\x_{t-1})) \\
&~~~ - \nabla_{xy}^2 g(\x_{t-1}, \y^*(\x_{t-1}))[\nabla_{yy}^2 g(\x_{t-1}, \y^*(\x_{t-1})) ]^{-1} \nabla_y f(\x_{t-1}, \y^*(\x_{t-1}))]\|^2 \\
&\leq 2 \|\nabla_x f(\x_t, \y^*(\x_t)) - \nabla_x f(\x_{t-1}, \y^*(\x_{t-1}))\|^2\\
& + 6\|\nabla_{xy}^2 g(\x_t, \y^*(\x_t))[\nabla_{yy}^2 g(\x_t, \y^*(\x_t)) ]^{-1} \nabla_y f(\x_t, \y^*(\x_t)) \\
&~~~~~~ - \nabla_{xy}^2 g(\x_t, \y^*(\x_t))[\nabla_{yy}^2 g(\x_t, \y^*(\x_t)) ]^{-1} \nabla_y f(\x_{t-1}, \y^*(\x_{t-1}))\|^2 \\
& + 6\|\nabla_{xy}^2 g(\x_t, \y^*(\x_t))[\nabla_{yy}^2 g(\x_t, \y^*(\x_t)) ]^{-1} \nabla_y f(\x_{t-1}, \y^*(\x_{t-1})) \\
&~~~~~~ - \nabla_{xy}^2 g(\x_t, \y^*(\x_t))[\nabla_{yy}^2 g(\x_{t-1}, \y^*(\x_{t-1})) ]^{-1} \nabla_y f(\x_{t-1}, \y^*(\x_{t-1}))\|^2 \\
& + 6\|\nabla_{xy}^2 g(\x_t, \y^*(\x_t))[\nabla_{yy}^2 g(\x_{t-1}, \y^*(\x_{t-1})) ]^{-1} \nabla_y f(\x_{t-1}, \y^*(\x_{t-1})) \\
&~~ - \nabla_{xy}^2 g(\x_{t-1}, \y^*(\x_{t-1}))[\nabla_{yy}^2 g(\x_{t-1}, \y^*(\x_{t-1})) ]^{-1} \nabla_y f(\x_{t-1}, \y^*(\x_{t-1}))\|^2 \\
&\leq (2L_{fx}^2 \!+\! \frac{6 C_{gxy}^2 L_{fy}^2}{\lambda^2} \!+\! \frac{6 C_{gxy}^2 L_{gyy}^2 C_{fy}^2 }{\lambda^4} \!+ \!\frac{6L_{gxy}^2 C_{fy}^2 }{\lambda^2}) (\|\x_t-\x_{t-1}\|^2 + \|\y^*(\x_t) - \y^*(\x_{t-1})\|^2) \\
&\leq \underbrace{(2L_{fx}^2 + \frac{6 C_{gxy}^2 L_{fy}^2}{\lambda^2} + \frac{6 C_{gxy}^2 L_{gyy}^2 C_{fy}^2 }{\lambda^4} + \frac{6L_{gxy}^2 C_{fy}^2 }{\lambda^2})(1+L_y^2)}\limits_{L_F} \|\x_t-\x_{t-1}\|^2. 
\end{split}
\end{equation}
\end{small}


Define
\begin{equation}
\begin{split}
\widehat{\nabla} F(\x_t, \y_t) = \nabla_{x} f(\x_t, \y_t) - \nabla_{xy} g(\x_t, \y_t) \E[h_{t+1}] 
\nabla_y f(\x_t, \y_t). 
\end{split}
\end{equation}

Then
\begin{equation}
\begin{split}  
&\E\|\z_{t+1} - \nabla F(\x_t) + e_t\|^2 \\
&= \E\| (1-\gamma)(\z_t - \nabla F(\x_{t-1}, \y^*(\x_{t-1}))) \\
&~~~~~~~~~ + \gamma ( \O_{fx}(\x_t,\y_t) - \O_{gy}(\x_t,\y_t)h_{t+1} \O_{fy}(\x_t, \y_t) - \nabla F(\x_t, \y^*(\x_t))) \|^2 \\
&= \E\| (1-\gamma)(\z_t - \nabla F(\x_{t-1}, \y^*(\x_{t-1}))) \\
&~~~~~~ + \gamma ( \O_{fx}(\x_t,\y_t) - \O_{gy}(\x_t,\y_t)h_{t+1} \O_{fy}(\x_t, \y_t) - \widehat{\nabla} F(\x_t, \y_t)) \\  
&~~~~~~ +\gamma (\widehat{\nabla} F(\x_t, \y_t) - \nabla F(\x_t, \y^*(\x_t))) \|^2 \\
&= \E\|(1-\gamma)(\z_t - \nabla F(\x_{t-1}, \y^*(\x_{t-1}))) + \gamma(\widehat{\nabla} F(\x_t, \y_t) - \nabla F(\x_t, \y^*(\x_t))) \|^2 \\
&~~~ + \gamma^2\|\O_{fx}(\x_t,\y_t) - \O_{gy}(\x_t,\y_t)h_{t+1} \O_{fy}(\x_t, \y_t) - \widehat{\nabla} F(\x_t, \y_t)\|^2 \\
&\leq (1+\frac{\gamma}{2})(1-\gamma)^2 \E\|\z_t - \nabla F(\x_{t-1}, \y^*(\x_{t-1}))\|^2
+ (1+\frac{2}{\gamma}) \gamma^2\|\widehat{\nabla} F(\x_t, \y_t) - \nabla F(\x_t, \y^*(\x_t))\|^2 \\
&~~~ + \gamma^2\|\O_{fx}(\x_t,\y_t) - \O_{gy}(\x_t,\y_t)h_{t+1} \O_{fy}(\x_t, \y_t) - \widehat{\nabla} F(\x_t, \y_t)\|^2\\
&\leq (1+\frac{\gamma}{2})(1-\gamma)^2 \E\|\z_t - \nabla F(\x_{t-1}, \y^*(\x_{t-1}))\|^2
+ (1+\frac{2}{\gamma}) 2\gamma^2\|\widehat{\nabla} F(\x_t, \y_t) - \nabla F(\x_t, \y_t)\|^2 \\
&~~~ + (1+\frac{2}{\gamma}) 2\gamma^2\|\nabla F(\x_t, \y_t) - \nabla F(\x_t, \y^*(\x_t))\|^2 \\
&~~~ + \gamma^2\|\O_{fx}(\x_t,\y_t) - \O_{gy}(\x_t,\y_t)h_{t+1} \O_{fy}(\x_t, \y_t) - \widehat{\nabla} F(\x_t, \y_t)\|^2,
\end{split}
\end{equation} 
where the last three terms can be bounded as below.
First,
\begin{small}
\begin{align}
\begin{split} 
\label{eq:F_y_F_y_star}
&\|\nabla F(\x_t, \y_t) - \nabla F(\x_t, \y^*(\x_t))\|^2 \\
&\leq \|\nabla_x f(\x_t, \y_t) - \nabla^2_{xy}g(\x_t, \y_t)[\nabla_{yy}^2g(\x_t, \y_t)]^{-1}\nabla_y f(\x_t, \y_t) \\
&- \nabla_x f(\x_t, \y^*(\x_t)) + \nabla^2_{xy}g(\x_t, \y^*(\x_t))[\nabla_{yy}^2g(\x_t, \y^*(\x_t))]^{-1}\nabla_y f(\x_t, \y^*(\x_t)) \|^2\\
& \leq 2\|\nabla_x f(\x_t, \y_t) -   \nabla_x f(\x_t, \y^*(\x_t))\|^2 \\
&+ 2\| \nabla^2_{xy}g(\x_t, \y_t)[\nabla_{yy}^2g(\x_t, \y_t)]^{-1}\nabla_y f(\x_t, \y_t) \\ 
&~~~ -  \nabla^2_{xy}g(\x_t, \y^*(\x_t)) [\nabla_{yy}^2g(\x_t, \y^*(\x_t))]^{-1}\nabla_y f(\x_t, \y^*(\x_t))\|^2\\
&\leq 2\|\nabla_x f(\x_t, \y_t) -   \nabla_x f(\x_t, \y^*(\x_t))\|^2\\
&  + 6\| \nabla^2_{xy}g(\x_t, \y_t)[\nabla_{yy}^2g(\x_t, \y_t)]^{-1}\nabla_y f(\x_t, \y_t) \\
&~~~ -  \nabla^2_{xy}g(\x_t, \y^*(\x_t))[\nabla_{yy}^2g(\x_t, \y_t)]^{-1}\nabla_y f(\x_t, \y_t)\|^2\\  
& + 6\| \nabla^2_{xy}g(\x_t, \y^*(\x_t))[\nabla_{yy}^2g(\x_t, \y_t)]^{-1}\nabla_y f(\x_t, \y_t) \\
&~~~ -  \nabla^2_{xy}g(\x_t, \y^*(\x_t))[\nabla_{yy}^2g(\x_t, \y^*(\x_t))]^{-1}\nabla_y f(\x_t, \y_t)\|^2\\
& + 6\| \nabla^2_{xy}g(\x_t, \y^*(\x_t))[\nabla_{yy}^2g(\x_t, \y^*(\x_t))]^{-1}\nabla_y f(\x_t, \y_t) \\
&~~~~~~ -  \nabla^2_{xy}g(\x_t, \y^*(\x_t))[\nabla_{yy}^2g(\x_t, \y^*(\x_t))]^{-1}\nabla_y f(\x_t, \y^*(\x_t))\|^2\\
&\overset{(a)}{\leq}2L^2_{fx} \|\y_t - \y^*(\x_t)\|^2 +  \frac{6L_{gxy}^2 C_{fy}^2}{\lambda^2}\|\y_t - \y^*(\x_t)\|^2 
+ \frac{6 C^2_{gxy}L^2_{gyy} C_{fy}^2}{\lambda^4}\|\y_t - \y^*(\x_t)\|^2\\
&+ \frac{6C^2_{gxy} L_{fy}^2}{\lambda^2}\|\y_t - \y^*(\x_t)\|^2\\ 
& = \underbrace{ (2L_{fx}^2 +  \frac{6C_{fy}^2L_{gxy}^2}{\lambda^2} + \frac{6C_{fy}^2C^2_{gxy}L^2_{gyy}}{\lambda^4}+ \frac{6L_{fy}^2C^2_{gxy}}{\lambda^2})}\limits_{C_0}\|\y_t - \y^*(\x_t)\|^2,
\end{split}
\end{align}
\end{small} 
where (a) uses the Lipschitz continuity of $\nabla_x f(\x, \cdot)$, $\nabla_y f(\x, \cdot)$, $\nabla_{xy}^2 g(\x, \cdot)$, $\nabla_{yy}^2 g(\x, \cdot)$, and upper bound of $\|\nabla_y f(\x, \y)\|$, $\|\nabla_{xy}^2 g(\x, \y)\|$, and $\|\nabla_{yy}^2 g(\x, \y)\|$. 
Second,
\begin{equation}
\begin{split} 
&\E\|\widehat{\nabla} F(\x_t, \y_t) - \nabla F(\x_t, \y_t)\|^2 \leq C_{gxy}^2 \|\E[h_{t+1}] - [\nabla_{yy}^2 g(\x_t, \y_t)]^{-1}\|^2 C_{fy}^2 \\
&\leq C^2_{gxy} \frac{1}{\lambda^2} (1-\frac{\lambda}{C_{gyy}})^{2k_t},
\end{split} 
\end{equation} 
where the last inequality uses Assumption \ref{ass:5} and Lemma \ref{lem:ghadimi_h_to_g}. Also we have
\begin{equation} 
\begin{split}
&\E\|\O_{fx}(\x_t,\y_y) - \O_{gy}(\x_t,\y_t)h_{t+1} \O_{fy}(\x_t, \y_t) - \widehat{F}(\x_t, \y_t) \|^2 \\
&\leq 2\E\|\O_{fx}(\x_t, \y_t) - \nabla_x f(\x_t, \y_t)\|^2 \\
&~~~ + 6\E\|\O_{gxy}(\x_t,\y_t)h_{t+1} \O_{fy}(\x_t, \y_t) - \nabla_{xy}^2 g(\x_t, \y_t)h_{t+1} \O_{fy}(\x_t, \y_t) \|^2 \\
&~~~ + 6\E\|\nabla_{xy}^2 g(\x_t, \y_t)h_{t+1} \O_{fy}(\x_t, \y_t) 
- \nabla_{xy}^2 g(\x_t, \y_t)\E[h_{t+1}] \O_{fy}(\x_t, \y_t) \|^2 \\
&~~~ + 6\E\|\nabla_{xy}^2 g(\x_t, \y_t)\E[h_{t+1}] \O_{fy}(\x_t, \y_t)
- \nabla_{xy}^2 g(\x_t, \y_t)\E[h_{t+1}] \nabla_y f(\x_t, \y_t)\|^2 \\ 
&\leq 2\sigma^2 +  6\sigma^2\frac{k_t^2}{C_{gyy}^2}(C_{fy}^2+\sigma^2) + 24C_{gxy}^2\frac{k_t^2}{C_{gyy}^2}(C_{fy}^2+\sigma^2) + 6 C_{gxy}^2 \frac{k^2_t}{C^2_{gyy}}\sigma^2 := C_1. 
\end{split}
\end{equation} 

Thus,
\begin{equation*}
\begin{split} 
&\E\|\z_{t+1} - \nabla F(\x_t, \y^*(\x_t)) \|^2 \leq 
(1+\gamma) \E\|\z_{t+1} - \nabla F(\x_t, \y^*(\x_t)) + e_t\|^2 + (1+1/\gamma) \E\|e_t\|^2 \\ 
&\leq (1-\frac{\gamma}{2})\E\|\z_t - \nabla F(\x_{t-1}, \y^*(\x_{t-1}))\|^2 + 8\gamma C_{gxy}^2 \frac{1}{\lambda^2}(1-\frac{\lambda}{C_{gyy}})^{2k_t}\\
&~~~ 
+ 8\gamma C_0 \E\|\y_t - \y^*(\x_t)\|^2
+ \gamma^2 C_1 + \frac{2}{\gamma}L_F^2 \E\|\x_{t} - \x_{t-1}\|^2. 
\end{split}
\end{equation*}
\end{proof}

Now we are ready to prove Theorem \ref{thm:5}
\begin{proof}[Proof of Theorem \ref{thm:5}] 
Denote by $\Delta_{z,t} = \|\z_{t+1}-\nabla F(\x_t)\|^2$ and $\delta_{y,t} = \|\y_t - \y^*(\x_t)\|^2$. 
Using Lemma \ref{lem:sbo_var_1}, we have
\begin{equation}
\begin{split}
\E\left[\sum\limits_{t=0}^{T} \Delta_{z, t} \right] \leq& \frac{2\E[\Delta_{z, 0}]}{\gamma} + \sum\limits_{t=0}^{T}\left[\frac{16C_{gxy}^2}{\lambda^2}(1-\frac{\lambda}{C_{gyy}})^{2k_t}\right] + 16 C_0 \E\left[\sum\limits_{t=0}^{T} \|\y_t - \y^*(\x_t)\|^2\right] \\
&+ 2\gamma C_1(T+1) + \frac{4L_F^2}{\gamma^2}\E\left[\sum\limits_{t=0}^{T}\|\x_{t+1} - \x_t\|^2\right].
\end{split}
\label{eq:sbo_var_rearr}
\end{equation} 

Setting $k_t = O(\frac{C_{gyy}}{2\lambda} \ln (64 \sqrt{T+1} C_{gxy}^2/(\lambda^2 )) $), 
we have $16 C_{gxy}^2 \frac{1}{\lambda} \left( 1-\frac{\lambda}{C_{gyy}} \right)^{2 k_t} \leq \frac{1}{4\sqrt{T+1}}$.  
Therefore, 
\begin{small}
\begin{equation*}
\small 
\begin{split}
&\sum\limits_{t=0}^{T} \E[\Delta_{z,t}] \leq \frac{2\E[\Delta_{z, 0}]}{\gamma}
+ 16 C_0 \E\left[ \sum\limits_{t=1}^{T+1} \delta_{y,t} \right]
+2\gamma C_1(T+1) \\
&+ \frac{4L_F^2 }{\gamma^2}\E\left[ \sum\limits_{t=0}^{T} \|\x_{t+1} - \x_{t}\|^2\right] + \frac{\sqrt{T+1}}{4}. 
\end{split}
\end{equation*} 
\end{small} 

Rearranging Lemma \ref{lem:sbo_y_recursion} yields
\begin{equation} 
\begin{split} 
\sum\limits_{t=0}^{T} \E[\delta_{y,t}] &\leq \frac{2}{\eta_y \lambda} \E[\delta_{y,0}]  
+ \frac{4\eta_y \sigma^2(T+1)}{\lambda} + \sum\limits_{t=0}^{T} \frac{ 8 L_y^2 \eta_x^2}{\eta_y^2  \lambda^2} \E\|\z_{t+1}\|^2, 
\end{split} 
\end{equation}
and 
\begin{equation} 
\begin{split} 
\sum\limits_{t=1}^{T+1} \E[\delta_{y,t}] &\leq \frac{2}{\eta_y \lambda} \E[\delta_{y,0}]  
+ \frac{4\eta_y \sigma^2(T+1)}{\lambda} + \sum\limits_{t=0}^{T} \frac{ 8 L_y^2 \eta_x^2}{\eta_y^2  \lambda^2} \E\|\z_{t+1}\|^2. 
\end{split} 
\label{eq:sbo_var_rearr2}
\end{equation}

Using (\ref{eq:sbo_var_rearr}), (\ref{eq:sbo_var_rearr2}) and  Lemma \ref{lem:sbo_1}, 
\begin{small}
\begin{equation*}
\begin{split}
&\sum\limits_{t=0}^T \E\|\nabla F(\x_t)\|^2 \leq \frac{2(F(\x_0) - F(\x_{T+1}))}{\eta_x} +  \sum\limits_{t=0}^{T} \E\|\nabla F(\x_t) - \z_{t+1}\|^2 - \frac{1}{2} \sum\limits_{t=0}^{T}  \E\|\z_{t+1}\|^2 \\ 
& \leq \frac{2(F(\x_0) - F_*)}{\eta_x} 
+ \frac{2\E[\Delta_{z, 0}]}{\gamma}
+ 16 C_0 \E\left[ \sum\limits_{t=1}^{T+1} \delta_{y,t} \right]
+2\gamma C_1(T+1) \\
&+ \frac{4L_F^2 }{\gamma^2}\E\left[ \sum\limits_{t=0}^{T} \|\x_t - \x_{t-1}\|^2\right] 
+ \frac{\sqrt{T+1}}{4}  - \frac{1}{2}\sum\limits_{t=0}^{T} \E\|\z_{t+1}\|^2 \\ 
&\leq \frac{2(F(\x_0) - F_*)}{\eta_x} 
+ \frac{2\E[\Delta_{z, 0}]}{\gamma} 
+  \frac{32C_0}{\eta_y \lambda} \E[\delta_{y,0}]  
+ \frac{64C_0\eta_y  \sigma^2(T+1)}{\lambda} \\
&~~~ + \sum\limits_{t=0}^{T} \left(\frac{ 128C_0 L_y^2 \eta_x^2}{\eta_y^2  \lambda^2} + \frac{4L_F^2 \eta_x^2}{\gamma^2} - \frac{1}{2} \right) \E\|\z_{t+1}\|^2  +2\gamma C_1(T+1) + \frac{\sqrt{T+1}}{4}  \\
&\leq  \frac{2(F(\x_0) - F_*)}{\eta_x} 
+ \frac{2\E[\Delta_{z, 0}]}{\gamma}
+  \frac{32C_0}{\eta_y\lambda} \E[\delta_{y,0}]  
+ \frac{64C_0\eta_y  \sigma^2(T+1)}{\lambda} \\
&~~~ + 2\gamma C_1 (T+1) + \frac{\sqrt{T+1}}{4},
\end{split}
\end{equation*}
\end{small} 
where the last inequality is due to the fact
$\left(\frac{ 128C_0 L_y^2 \eta_x^2}{\eta_y^2 \lambda^2} + \frac{4L_F^2 \eta_x^2}{\gamma^2} - \frac{1}{2} \right) \leq -\frac{1}{4} \leq 0$ by the setting 
\begin{equation}  
\begin{split}     
\eta_x \leq \min\left(\frac{\eta_y  \lambda}{32\sqrt{C_0} L_y}, 
\frac{\gamma}{8L_F} \right). 
\end{split}
\end{equation} 

Thus,
\begin{small}
\begin{equation*}
\begin{split}
\frac{1}{T+1}\sum\limits_{t=0}^{T} \E\|\nabla F(\x_t)\|^2 \leq  \frac{2\Delta_F}{\eta_x T} 
+ \frac{2\E[\Delta_{z, 0}]}{\gamma T}
+  \frac{32C_0}{\eta_y \lambda T} \E[\delta_{y,0}]
+ \frac{64C_0\eta_y  \sigma^2}{\lambda} + 2\gamma C_1 + \frac{1}{4\sqrt{T+1}}.
\end{split} 
\end{equation*} 
\end{small} 
By the setting
$\gamma = O\left(\frac{1}{\sqrt{T}}\right)$, 
$\eta_x = O\left(\frac{1}{\sqrt{T}}\right)$, and
$\eta_y = O\left(\frac{1}{\sqrt{T}}\right)$, 
we have 
\begin{equation}
\begin{split}
\frac{1}{T+1} \E\left[ \sum\limits_{t=0}^{T} \|\nabla F(\x_t)\|^2 \right] \leq O\left(\frac{1}{\sqrt{T}}\right),
\end{split} 
\end{equation}
which concludes the first part of the theorem. 
For the second part, we have
\begin{small}
\begin{align*} 
&\sum\limits_{t=0}^{T} \E(\Delta_{z, t} + C_0 \delta_{y, t}) \\
&\leq \frac{2\E[\Delta_{z, 0}]}{\gamma}
+ \E\left[ \sum\limits_{t=0}^{T} (16 C_0 \delta_{y,t+1} + C_0 \delta_{y,t} )\right] 
+2\gamma C_1(T+1) \\
&~~~ + \frac{4L_F^2 }{\gamma^2}\E\left[ \sum\limits_{t=0}^{T} \|\x_t - \x_{t-1}\|^2\right] + \frac{\sqrt{T+1}}{4}\\
&\leq \frac{2\E[\Delta_{z, 0}]}{\gamma} + 
\frac{34C_0}{\eta_y \lambda} \E[\delta_{y,0}]  
+ \frac{68C_0\eta_y  \sigma^2(T+1)}{\lambda} + 2\gamma C_1(T+1) \\
&~~~ + \sum\limits_{t=0}^{T} \left(\frac{ 136C_0 \kappa^2 \eta_x^2}{\eta_y^2 \lambda^2} + \frac{4L_F^2}{\gamma^2} \right) \E\|\z_{t+1}\|^2 + \frac{\sqrt{T+1}}{4} \\
&\leq \frac{2\E[\Delta_{z, 0}]}{\gamma} + 
\frac{34C_0}{\eta_y \lambda} \E[\delta_{y,0}]  
+ \frac{68C_0\eta_y  \sigma^2(T+1)}{\lambda} + 2\gamma C_1(T+1) \\
&~~~ +\frac{1}{3} \sum\limits_{t=0}^{T} \E\|\z_{t+1} - F(\x_t) + F(\x_t)\|^2 + \frac{\sqrt{T+1}}{4} \\
&\leq \frac{2\E[\Delta_{z, 0}]}{\gamma} + 
\frac{34C_0}{\eta_y \lambda} \E[\delta_{y,0}]  
+ \frac{68C_0\eta_y  \sigma^2(T+1)}{\lambda} + 2\gamma C_1(T+1) \\
&~~~ + \frac{2}{3} \sum\limits_{t=0}^{T} \E[\Delta_{z,t} + \|F(\x_t)\|^2] + \frac{\sqrt{T+1}}{4}.
\end{align*}  
\end{small}   
Thus, 
\begin{equation}
\begin{split}
\frac{1}{T+1}\sum\limits_{t=0}^{T} \E(\Delta_{z, t} + C_0 \delta_{y, t}) \leq& \frac{6\E[\Delta_{z, 0}]}{\gamma T} + 
\frac{102C_0}{\eta_y \lambda T} \E[\delta_{y,0}]  
+ \frac{204 C_0\eta_y  \sigma^2}{\lambda} \\
&+ 6\gamma C_1  + \frac{2}{T+1}\sum\limits_{t=0}^{T} \|F(\x_t)\|^2] + \frac{3}{4\sqrt{T+1}}\\
\leq & O\left(\frac{1}{\sqrt{T}}\right).   
\end{split}
\end{equation} 
\end{proof}

 \section{An Alternative for Stochastic Bilevel Optimization} 
\label{section:sbo_alter}
In this algorithm, we present an alternative for stochastic bilevel optimization. Compared with Assumption \ref{ass:5}, we require a weaker assumption, i.e. without requiring $0 \preceq \mathcal{O}_{gyy}(\x, \y) \preceq C_{gyy} I$. The algorithm does projections and has a cost of $O(d_l^3)$ at each iteration. 

\begin{assumption}\label{ass:5_proj} 
For $f, g$ we assume the following conditions hold 
\begin{itemize}[leftmargin=*]
\item $g(\x,\cdot)$ is $\lambda$-strongly convex with respect to $\y$ for any fixed $\x$. 
\item $\nabla_x f(\x, \y)$ is $L_{fx}$-Lipschitz continuous, $\nabla_y f(\x, \y)$ is $L_{fy}$-Lipschitz continuous, $\nabla_y g(\x, \y)$ is $L_{gy}$-Lipschitz continuous for any $\xi$, $\nabla_{xy}^2 g(\x, \y)$ is $L_{gxy}$-Lipschitz continuous, $\nabla_{yy}^2 g(\x, \y)$ is $L_{gyy}$-Lipschitz continuous, all respect to $(\x, \y)$.
\item  $\O_{fx},  \O_{fy},   \O_{gy}, \O_{gxy}, \O_{gyy}$ are unbiased stochastic oracle of  $\nabla f(\x, \y), \nabla_y f(\x, \y)$, $\nabla_y g(\x, \y)$, $\nabla_{xy}^2 g(\x, \y)$ and $\nabla_{yy}^2 g(\x, \y)$, and they  have a variance bounded by $\sigma^2$. 
\item  $\|\nabla_y f(\x, \y)\|^2 \leq C_{fy}^2$, $\|\nabla_{xy}^2 g(\x, \y)\|^2 \leq C_{gxy}^2$. 
\end{itemize}
\end{assumption}

We initialize
\begin{equation}
\begin{split}
&\u_0 = \O_{fx}(\x_t, \y_t),\\
&\v_0 = \Pi_{C_{fy}} \O_{fy}(\x_t, \y_t),\\
&V_0 = \Pi_{L_{gxy}} \O_{gxy}(\x_t, \y_t),\\
&h_0 = \frac{k_0}{L_{gy}}\prod_{i=1}^p(I - \frac{1}{L_{gy}}\O_{gyy, i}(\x_0, \y_0))\\
&H_0 = \Pi_{1/\lambda}(h_0)
\end{split}
\end{equation}
and consider the following update: 
\begin{equation}\label{eqn:sbo_proj}
\hspace*{-0.3in}\text{SBMA:}\quad\left\{
\begin{aligned}
\u_{t+1} & = \beta\u_t + (1-\beta) \O_{fx}(\x_t, \y_t)\\
\v_{t+1}&  = \Pi_{C_{fy}} [\beta\v_t + (1-\beta) \O_{fy}(\x_t, \y_t)],\\
V_{t+1}&  = \Pi_{L_{gxy}} [\beta V_t + (1-\beta) \O_{gxy}(\x_t, \y_t)],\\
h_{t+1}&  = \frac{k_t}{L_{gy}}\prod_{i=1}^p(I - \frac{1}{L_{gy}}\O_{gyy, i}(\x_t, \y_t)),  \\
H_{t+1} &=  \Pi_{1/\lambda} [\beta H_t + (1-\beta) h_{t+1}] \\ 
\x_{t+1}& = \x_t - \eta_x( \u_{t+1} - V_{t+1}H_{t+1}\v_{t+1}),  \\
\y_{t+1}& = \Pi_{\Y}[\y_t + \eta_y \O_{gy}(\x_t, \y_t)], \quad t=0, \ldots, T.
\end{aligned}\right.,
\end{equation}
where $p\in \{1, k_t\}$  is uniformly sampled and the projection of $V_{t+1}$ and $H_{t+1}$ project the largest eigen values to $1/L_{gxy}$ and $1/\lambda$, respectively. We have the following convergence regarding SBMA. We denote $\Delta_t =  \|\u_{t+1} - V_{t+1}H_{t+1}\v_{t+1}- \nabla F(\x_t)\|^2$.


\begin{theorem}\label{thm:6}
Let  $ F(\x_0) - F_*\leq \Delta_F$. Suppose Assumption~\ref{ass:5_proj} holds. By setting
$1-\beta=\gamma = O\left(\frac{1}{\sqrt{T}}\right)$, 
$\eta_x = O\left(\frac{1}{\sqrt{T}}\right)$, 
$\eta_y = O\left(\frac{1}{\sqrt{T}}\right)$,
$k_t = O(\ln T)$, 
we have 
\begin{align*}
\frac{1}{T+1}\sum_{t=0}^T \E[\|\nabla F(\x_t)\|^2]\leq O\left(\frac{1}{\sqrt{T}}\right), \quad \frac{1}{T+1}\sum\limits_{t=0}^{T} \E[\Delta_{z, t} + C_0 \delta_{y,t}] \leq O\left(\frac{1}{\sqrt{T}}\right).   
\end{align*} 
\end{theorem}

\subsection{Proof of Theorem \ref{thm:6}} 
Denote
\begin{equation}
\begin{split}
\z_{t} = \u_{t} - V_{t} H_{t} \v_{t}. 
\end{split}
\end{equation}

Note that Lemma \ref{lem:sbo_lip_y}, Lemma \ref{lem:sbo_1} and Lemma \ref{lem:sbo_y_recursion} still hold.  
We bound $\| \nabla F(\x_t) - \z_t\|^2$ in the following lemma.
\begin{lemma}\label{lem:sbo_var_1_proj}  
For all $t\geq 0$, we have 
\begin{align*} 
&\|\nabla F(\x_t) - \z_{t+1}\|^2 = \|\nabla F(\x_t) - (\u_{t+1} -V_{t+1} H_{t+1} \v_{t+1} ) \|^2 \\ 
&\leq 2C_0\|\y_t - \y^*(\x_t)\|^2 + 2C_1\|\u_{t+1} -  \nabla_x f(\x_t, \y^*(\x_t)) \|^2 +2C_2\|\v_{t+1} -\nabla_y f(\x_t, \y^*(\x_t)) \|^2 \\
& + 2C_3\|V_{t+1} - \nabla^2_{xy}g(\x_t, \y^*(\x_t))\|^2 +2C_4\|H_{t+1} - (\nabla_{yy}^2g(\x_t, \y^*(\x_t)))^{-1}\|^2, 
\end{align*}
where $C_0=(2L_{fx}^2 +  \frac{6C_{fy}^2L_{gxy}^2}{\lambda^2} + \frac{6C_{fy}^2C^2_{gxy}L^2_{gyy}}{\lambda^4}+ \frac{6L_{fy}^2C^2_{gxy}}{\lambda^2})$, $C_1 = 2$, $C_2 = \frac{6C_{fy}^2}{\lambda^2}$, $C_3 = 6C_{gxy}^2C_{fy}^2$ and $C_4 = \frac{6C_{gxy}^2}{\lambda^2}$.
\end{lemma}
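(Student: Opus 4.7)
The plan is to prove the bound by directly substituting the implicit-gradient representation for $\nabla F(\x_t)$ and comparing term-by-term with $\z_{t+1} = \u_{t+1} - V_{t+1} H_{t+1} \v_{t+1}$. Using equation (\ref{eq:sbo_nabla_F}), I would write
\begin{equation*}
\nabla F(\x_t) - \z_{t+1} = \bigl(u^* - \u_{t+1}\bigr) - \bigl(A^* B^* v^* - V_{t+1} H_{t+1} \v_{t+1}\bigr),
\end{equation*}
where $u^* := \nabla_x f(\x_t, \y^*(\x_t))$, $v^* := \nabla_y f(\x_t, \y^*(\x_t))$, $A^* := \nabla^2_{xy} g(\x_t, \y^*(\x_t))$, $B^* := [\nabla^2_{yy} g(\x_t, \y^*(\x_t))]^{-1}$. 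Applying the elementary inequality $\|a-b\|^2 \leq 2\|a\|^2+2\|b\|^2$ splits this into a clean linear piece $2\|u^*-\u_{t+1}\|^2$ (which already matches $2C_1$ with $C_1=1$, or absorbs to $2$) plus the trilinear residual $2\|A^*B^*v^* - V_{t+1} H_{t+1}\v_{t+1}\|^2$.

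Next I would telescope the trilinear residual as
\begin{equation*}
A^* B^* v^* - V_{t+1} H_{t+1} \v_{t+1} = (A^* - V_{t+1})\,B^* v^* + V_{t+1}\,(B^* - H_{t+1})\,v^* + V_{t+1} H_{t+1}\,(v^* - \v_{t+1}),
\end{equation*}
and apply $(a+b+c)^2 \leq 3(a^2+b^2+c^2)$ together with submultiplicativity of the spectral norm. The four uniform bounds I need come directly from the construction of the iterates and Assumption~\ref{ass:5_proj}: $\|B^*\|\leq 1/\lambda$ by $\lambda$-strong convexity of $g(\x,\cdot)$; $\|v^*\|\leq C_{fy}$ by the gradient-norm assumption; $\|V_{t+1}\|\leq C_{gxy}$ because $V_{t+1}$ is produced by the projection onto the operator-norm ball of radius $C_{gxy}$; and $\|H_{t+1}\|\leq 1/\lambda$ because of the projection $\Pi_{1/\lambda}$. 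Term by term this yields $\frac{C_{fy}^2}{\lambda^2}\|V_{t+1}-A^*\|^2$, $C_{gxy}^2 C_{fy}^2\|H_{t+1}-B^*\|^2$, and $\frac{C_{gxy}^2}{\lambda^2}\|\v_{t+1}-v^*\|^2$, so that after multiplying by $6$ (the factor of $2$ from the initial split and $3$ from the triple split) the claimed coefficients $2C_3$, $2C_4$, $2C_2$ appear (the apparent swap between the subscripts of $C_2$ and $C_4$ in the statement is a bookkeeping label, not a mathematical issue).

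Since the direct decomposition above already controls $\|\nabla F(\x_t)-\z_{t+1}\|^2$ without any $\|\y_t-\y^*(\x_t)\|^2$ term, the $2C_0\|\y_t-\y^*(\x_t)\|^2$ summand in the claim is a free nonnegative slack that I retain only because it is useful when combining Lemma~\ref{lem:sbo_var_1_proj} with the SEMA variance recursions for $\u,\v,V,H$ (each of those recursions naturally produces a $C_0\|\y_t-\y^*(\x_t)\|^2$ coupling when the reference point is shifted from $(\x_t,\y_t)$ to $(\x_t,\y^*(\x_t))$ via the Lipschitz constants summarized in $C_0$ from (\ref{eq:F_y_F_y_star})). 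Retaining this slack in the lemma statement therefore costs nothing and simplifies the downstream algebra.

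The only real subtlety, and the main thing to be careful about, is the triple-product decomposition: one must insert the intermediate matrices in an order that lets every residual factor be paired with a bounded operator (rather than with an unbounded one such as the product $V_{t+1}H_{t+1}\v_{t+1}$ itself), and one must ensure each factor uses the correct uniform bound arising from the projection step in~(\ref{eqn:sbo_proj}). The arithmetic of the constants $C_0,\ldots,C_4$ is then mechanical, and the lemma follows.
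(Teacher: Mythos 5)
Your proposal is correct and essentially reproduces the paper's own proof: insert the implicit-gradient formula for $\nabla F(\x_t)=\nabla F(\x_t,\y^*(\x_t))$, peel off the linear $\u$-residual with a factor $2$, and telescope the trilinear residual into three summands with Young's inequality, bounding each leading factor by the projection radii ($\|V_{t+1}\|\le C_{gxy}$, $\|H_{t+1}\|\le 1/\lambda$, $\|\v_{t+1}\|\le C_{fy}$) and the assumption bounds ($\|A^*\|\le C_{gxy}$, $\|B^*\|\le 1/\lambda$, $\|v^*\|\le C_{fy}$). Your telescoping is a benign mirror image of the paper's (you write $(A^*-V)B^*v^*+V(B^*-H)v^*+VH(v^*-\v)$ while the paper writes $(V-A^*)H\v+A^*(H-B^*)\v+A^*B^*(\v-v^*)$), and yields identical coefficients; you also correctly flag that the $2C_0\|\y_t-\y^*(\x_t)\|^2$ term and the extra factor of $2$ are nonnegative slack absent from the paper's own derivation, and that the coefficients $C_2,C_3,C_4$ are mislabeled in the lemma statement relative to the residuals they actually multiply (in fact it is a cyclic permutation among the three, not merely a swap of $C_2$ and $C_4$, though as you say the displayed inequality is valid once the labels are rematched).
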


\begin{proof}
First, (\ref{eq:sbo_nabla_F}), (\ref{eq:sbo_nabla_F_x_y}), and (\ref{eq:F_y_F_y_star}) of the last section still hold i.e.,
\begin{equation*}
\begin{split}
\nabla F(\x) &= \nabla_x f(\x, \y^*(\x)) + \nabla \y^*(\x)^\top \nabla_y f(\x, \y^*(\x)) \\
&= \nabla_x f(\x, \y^*(\x)) - \nabla_{xy}^2 g(\x, \y^*(\x))[\nabla_{yy}^2 g(\x, \y^*(\x)) ]^{-1} \nabla_y f(\x, \y^*(\x)), 
\end{split}
\end{equation*}
\begin{equation*}
\begin{split}
\nabla F(\x_t, \y_t) := \nabla_x f(\x_t, \y_t) - \nabla_{xy}^2 g(\x_t, \y_t)[\nabla_{yy}^2 g(\x_t, \y_t)]^{-1} \nabla_y f(\x_t, \y_t). 
\end{split}
\end{equation*} 
and
\begin{align*}
\small
&\|\nabla F(\x_t, \y_t) - \nabla F(\x_t)\|^2 \\
&\leq \underbrace{ (2L_{fx}^2 +  \frac{6C_{fy}^2L_{gxy}^2}{\lambda^2} + \frac{6C_{fy}^2C^2_{gxy}L^2_{gyy}}{\lambda^4}+ \frac{6L_{fy}^2C^2_{gxy}}{\lambda^2})}\limits_{C_0}\|\y_t - \y^*(\x_t)\|^2 
\end{align*}
Next, 
\begin{small}
\begin{align*}
&\E\|\u_{t+1} -V_{t+1}H_{t+1} \v_{t+1}  - \nabla F(\x_t, \y^*(\x_t))\|^2\\
& = \E\| [\u_{t+1} -V_{t+1} H_{t+1} \v_{t+1}] \\
& -  [\nabla_x f(\x_t, \y^*(\x_t)) - \nabla^2_{xy}g(\x_t, \y^*(\x_t)) [\nabla_{yy}^2g(\x_t, \y^*(\x_t))]^{-1} \nabla_y f(\x_t, \y^*(\x_t))]\|^2\\
&\leq 2\E\|\u_{t+1} -  \nabla_x f(\x_t, \y^*(\x_t)) \|^2\\
&+ 2\E\|V_{t+1} H_{t+1} \v_{t+1} - \nabla^2_{xy}g(\x_t, \y^*(\x_t))[\nabla_{yy}^2g(\x_t, \y^*(\x_t))]^{-1}\nabla_y f(\x_t, \y^*(\x_t))\|^2\\ 
&\leq 2\E\|\u_{t+1} -  \nabla_x f(\x_t, \y^*(\x_t)) \|^2 + 6\E\|V_{t+1} H_{t+1} \v_{t+1} - \nabla^2_{xy}g(\x_t, \y^*(\x_t)) H_{t+1} \v_{t+1}\|^2 \\  
&+ 6\E\| \nabla^2_{xy}g(\x_t, \y^*(\x_t)) H_{t+1} \v_{t+1} - \nabla^2_{xy}g(\x_t, \y^*(\x_t))[\nabla_{yy}^2g(\x_t, \y^*(\x_t))]^{-1}\v_{t+1}\|^2\\
&+ 6\E\| \nabla^2_{xy}g(\x_t, \y^*(\x_t))[\nabla_{yy}^2g(\x_t, \y^*(\x_t)]^{-1}\v_{t+1} \\
&~~~ -  \nabla^2_{xy}g(\x_t, \y^*(\x_t))[\nabla_{yy}^2g(\x_t, \y^*(\x_t))]^{-1}\nabla_y f(\x_t, \y^*(\x_t))\|^2\\
&\overset{(a)}{\leq} 2\E\|\u_{t+1} -  \nabla_x f(\x_t, \y^*(\x_t)) \|^2 + \frac{6C_{fy}^2}{\lambda^2}\E\|V_{t+1} - \nabla^2_{xy}g(\x_t, \y^*(\x_t))\|^2 \\
&+ 6 C_{gxy}^2C_{fy}^2\E\|H_{t+1} -[\nabla_{yy}^2g(\x_t, \y^*(\x_t))]^{-1}\|^2
+ \frac{6C_{gxy}^2}{\lambda^2}\E\|\v_{t+1} -\nabla_y f(\x_t, \y^*(\x_t)) \|^2\\ 
&\overset{(b)}{\leq} C_1\E\|\u_{t+1} -  \nabla_x f(\x_t, \y^*(\x_t)) \|^2 + C_2\E\|V_{t+1} - \nabla^2_{xy}g(\x_t, \y^*(\x_t))\|^2 \\
&+C_3\E\|H_{t+1} - [\nabla_{yy}^2g(\x_t, \y^*(\x_t))]^{-1}\|^2 +C_4\E\|\v_{t+1} -\nabla_y f(\x_t, \y^*(\x_t)) \|^2, 
\end{align*} 
\end{small} 
where $(a)$ holds due to  $\|\v_{t+1}\|^2\leq C^2_{fy}$, $\|H_{t+1}\|^2 \leq \frac{1}{\lambda^2}$ and  $\|V_{t+1}\|^2\leq C^2_{gxy}$; and $(b)$ uses $C_1 = 2$, $C_2 = \frac{6C_{fy}^2}{\lambda^2}$, $C_3 = 6C_{gxy}^2C_{fy}^2$ and $C_4 = \frac{6C_{gxy}^2}{\lambda^2}$. 
\end{proof}

Now we are ready to prove Theorem \ref{thm:6}
\begin{proof}[Proof of Theorem \ref{thm:6}] 
Because $h_{t+1}$ is an biased estimator, we cannot use Lemma \ref{lem:sema_mengdi} for the variance recursion of $H_{t+1}$.
Let $e_t = (1-\gamma)((\nabla_{yy}^2 g(\x_t, \y^*(\x_t)))^{-1} - (\nabla_{yy}^2 g(\x_{t-1}, \y^*(\x_{t-1})))^{-1})$. 
We have
\begin{equation}
\begin{split}
\|e_t\|^2 &\leq (1-\gamma)^2 \frac{L^2_{gyy}}{\lambda^4} \left( \|\x_t - \x_{t-1}\|^2 + \|\y^*(\x_t) - \y^*(\x_{t-1})\|^2 \right)\\
&\leq  (1-\gamma)^2 \frac{L^2_{gyy}}{\lambda^4} (1+L_y^2)\|\x_t - \x_{t-1}\|^2. 
\end{split}
\end{equation}
and 
\begin{small}
\begin{align*}
\small
&\E[\|H_{t+1} - (\nabla_{yy}^2 g(\x_t, \y^*(\x_t)))^{-1} + e_t\|^2] \\
&= \E\left\|(1-\gamma)\left( H_t - (\nabla_{yy}^2 g(\x_{t-1}, \y^*(\x_{t-1})))^{-1})\right) + \gamma \left( h_{t+1} - (\nabla_{yy}^2 g(\x_t, \y^*(\x_t)))^{-1} \right) \right\|^2 \\
&= \E\|(1-\gamma)\left( H_t - (\nabla_{yy}^2 g(\x_{t-1}, \y^*(\x_{t-1})))^{-1})\right) 
\\
&~~~ + \gamma \left( (\nabla_{yy}^2 g(\x_t, \y_t))^{-1} - (\nabla_{yy}^2 g(\x_t, \y^*(\x_t)))^{-1} \right)  
+ \gamma \left( h_{t+1} - (\nabla_{yy}^2 g(\x_t, \y_t))^{-1} \right) \|^2 \\ 
&= \E\|(1-\gamma)\left( H_t - (\nabla_{yy}^2 g(\x_{t-1}, \y^*(\x_{t-1})))^{-1})\right) \\
&~~~ + \gamma \left( (\nabla_{yy}^2 g(\x_t, \y_t))^{-1} - (\nabla_{yy}^2 g(\x_t, \y^*(\x_t)))^{-1} \right)\|^2  + \gamma^2\E\| \left( h_{t+1} - (\nabla_{yy}^2 g(\x_t, \y_t))^{-1} \right) \|^2 \\
& ~~~
+ 2\|(1-\gamma)\left( H_t - (\nabla_{yy}^2 g(\x_{t-1}, \y^*(\x_{t-1})))^{-1})\right) \| \|\E[h_{t+1}] - (\nabla_{yy}^2 g(\x_t, \y_t))^{-1}\| \\ 
&~~~ 
+ 2\|\gamma \left( (\nabla_{yy}^2 g(\x_t, \y_t))^{-1} - (\nabla_{yy}^2 g(\x_t, \y^*(\x_t)))^{-1} \right)\| \|\E[h_{t+1}] - (\nabla_{yy}^2 g(\x_t, \y_t))^{-1}\| \\
&\leq (1+\frac{\gamma}{2})(1-\gamma)^2\E\|H_t - (\nabla_{yy}^2 g(\x_{t-1}, \y^*(\x_{t-1})))^{-1}\|^2 
\\
&~~~ + (1+\frac{2}{\gamma}) \gamma^2\E\| (\nabla_{yy}^2 g(\x_t, \y_t))^{-1} - (\nabla_{yy}^2 g(\x_t, \y^*(\x_t)))^{-1} \|^2 \\
&~~~ + 2\gamma^2(\frac{k_t^2}{C^2_{gyy}}+\frac{1}{\lambda^2})
+ \left(\frac{4}{\lambda} + \frac{4\gamma^2}{\lambda}  \right)\|\E[h_{t+1}] - (\nabla_{yy}^2 g(\x_t, \y_t))^{-1}\| \\
&\leq (1+\frac{\gamma}{2})(1-\gamma)^2\E\|H_t - (\nabla_{yy}^2 g(\x_{t-1}, \y^*(\x_{t-1})))^{-1}\|^2 
+ \frac{4\gamma}{\lambda^4} L_{gyy}^2\E\|\y_t - \y^*(\x_t)\|^2 \\
&~~~ + \gamma^2 C_5 + \left(\frac{4}{\lambda} + \frac{4\gamma^2}{\lambda}  \right)\frac{1}{\lambda}\left(1-\frac{\lambda}{C_{gyy}}\right)^{k_t},
\end{align*}
\end{small} 
where the last inequality uses Lemma \ref{lem:ghadimi_h_to_g}, Assumption \ref{ass:5_proj} and $C_5:=2\gamma^2(\frac{k_t^2}{C^2_{gyy}}+\frac{1}{\lambda^2})$.    

Denote $\Delta_{fx,t} = \E\|\u_{t+1}-\nabla_x f(\x_t, \y^*(\x_t))\|^2$, $\Delta_{fy,t}=\E\|\v_{t+1}-\nabla_y f (\x_t, \y^*(\x_t))\|^2$, 
$\Delta_{gxy,t}=\E\|V_{t+1}-\nabla_{xy}^2 g(\x_t, \y^*(\x_t))\|^2$,
$\Delta_{gyy,t} = \E\|H_{t+1}-(\nabla_{yy}^2 g(\x_t, \y^*(\x_t)))^{-1}\|^2$ and $\delta_{y,t} = \E\|\y_t - \y^*(\x_t)\|^2$. 

Using Young's inequality, 
\begin{small}
\begin{equation*}
\begin{split}
&\E\|H_{t+1} - (\nabla_{yy}^2 g(\x_t, \y^*(\x_t)))^{-1}\|^2 \\
& \leq (1+\gamma) \|H_{t+1} -  (\nabla_{yy}^2 g(\x_{t}, \y^*(\x_{t})))^{-1}+ \e_t\|^2 + (1+1/\gamma) \|\e_t\|^2 
\\
&\leq (1-\frac{\gamma}{2}) \E\|H_t - (\nabla^2_{yy} g(\x_{t-1}, \y^*(\x_{t-1})))^{-1}\|^2
+ \frac{8\gamma L^2_{gyy}}{\lambda^4}\E\|\y_t-\y^*(\x_t)\|^2 \\
&~~~ + 2\gamma^2 C_5 + 2 \left(\frac{4}{\lambda} + \frac{4\gamma^2}{\lambda}  \right)\frac{1}{\lambda}\left(1-\frac{\lambda}{C_{gyy}}\right)^{k_t} + \frac{2}{\gamma} \frac{L^2_{gyy}}{\lambda^4}(1+L_y^2)\|\x_t - \x_{t-1}\|^2,
\end{split} 
\end{equation*} 
which is 
\begin{equation}
\begin{split}
\E[\Delta_{gyy, t}] \leq& (1-\frac{\gamma}{2}) \E[\Delta_{gyy, t-1}]
+ \frac{8\gamma L^2_{gyy}}{\lambda^4} \E[\delta_{y,t}]  
+ 2\gamma^2 C_5 \\
&+ 2 \left(\frac{4}{\lambda} + \frac{4\gamma^2}{\lambda}  \right)\frac{1}{\lambda}\left(1-\frac{\lambda}{C_{gyy}}\right)^{k_t} + \frac{2}{\gamma} \frac{L^2_{gyy}}{\lambda^4}(1+L_y^2)\|\x_t - \x_{t-1}\|^2. 
\end{split} 
\end{equation} 
\end{small} 
Setting $k_t = O\left(\frac{C_{gyy}}{\lambda} \ln \left(\frac{4}{\gamma}\left(\frac{4}{\lambda} + \frac{4\gamma^2}{\lambda}  \right)\frac{\sqrt{T+1}}{\lambda}\right)\right)$,  
we have 
\begin{equation*}
    \frac{4}{\gamma} \left(\frac{4}{\lambda} + \frac{4\gamma^2}{\lambda}  \right) \frac{1}{\lambda} \left(1-\frac{\lambda}{C_{gyy}}\right)^{k_t} \leq \frac{\sqrt{T+1}}{4}.  
\end{equation*}

Let $r_t = (1-\gamma)(\nabla_x  f(\x_t, \y^*(\x_t)) - \nabla_x f(\x_{t-1}, \y^*(\x_{t-1})))$. We have
\begin{equation}
\|r_t\| \leq (1-\gamma) L_{fx}(\|\x_t - \x_{t-1}\| + \|\y^*(\x_t) - \y^*(\x_{t-1})\|) \leq (1-\gamma)L_{fx}(1+L_y) \|\x_t - \x_{t-1}\|. 
\end{equation}
and 
\begin{equation*}
\begin{split}
&\E\|\u_{t+1} - \nabla_x f(\x_t, \y^*(\x_t)) + r_t\|^2 \\
& = \E \|(1-\gamma)(\u_{t} - \nabla_x f(\x_{t-1}, \y^*(\x_{t-1}))) 
+ \gamma(\nabla_x f(\x_t, \y_t) - \nabla_x f(\x_t, \y^*(\x_t))) \\
&~~~~~~ + \gamma (\O_x(\x_t, \y_t) - \nabla_x f(\x_t, \y_t))\|^2 \\
& = \E \|(1-\gamma)(\u_{t} - \nabla_x f(\x_{t-1}, \y^*(\x_{t-1})))  
+ \gamma \nabla_x f(\x_t, \y_t) - \nabla_x f(\x_t, \y^*(\x_t))\|^2 \\
&~~~~~~ + \gamma^2 \E\|(\O_x(\x_t, \y_t) - \nabla_x f(\x_t, \y_t))\|^2 \\
&\leq (1+\frac{\gamma}{2})(1-\gamma)^2\E\|\u_{t} - \nabla_x f(\x_{t-1}, \y^*(\x_{t-1}))\|^2
+ (1+\frac{2}{\gamma}) \gamma^2 L_{fx}^2 \E\|\y_t - \y^*(\x_t)\|^2 + \gamma^2 \sigma^2
\end{split}
\end{equation*}
then, with $\gamma<\frac{1}{4}$, we get
\begin{equation*}
\begin{split} 
&\E\|\u_{t+1} - \nabla_x f(\x_t, \y^*(\x_t))\|^2 \leq (1+\gamma)\E\|\u_{t+1} - \nabla_x f(\x_t, \y^*(\x_t)) + r_t\|^2 + (1+1/\gamma)\|r_t\|^2\\
&\leq (1-\frac{\gamma}{2}) \E\|\u_t - \nabla_x f(\x_{t-1}, \y^*(\x_{t-1}))\|^2 + (1+\gamma)\gamma^2\sigma^2 + 4(1+\gamma)\gamma L_f^2 \E\|\y_t - \y^*(\x_t)\|^2 \\
&~~~ + (1+1/\gamma) (1-\gamma)^2 L_{fx}^2(1+L_y)^2 \|\x_t - \x_{t-1}\|^2 \\
&\leq (1-\frac{\gamma}{2}) \E\|\u_t - \nabla_x f(\x_{t-1}, \y^*(\x_{t-1}))\|^2
+ 2\gamma^2 \sigma^2 + 10\gamma L_{fx}^2\E\|\y_t - \y^*(\x_t)\|^2 \\
&~~~ + \frac{2L_{fx}^2(1+L_y)^2}{\gamma} \E\|\x_t - \x_{t-1}\|^2.  
\end{split}
\end{equation*}
Therefore,
\begin{equation} 
\begin{split} 
&\E [\Delta_{fx, t}] \leq (1-\frac{\gamma}{2})\E[\delta_{fx, t-1}]+ 2\gamma^2 \sigma^2 + 10\gamma L_{fx}^2\E[\delta_{y,t}] + \frac{2L_{fx}^2(1+L_y)^2}{\gamma} \E\|\x_t - \x_{t-1}\|^2.  
\end{split}
\end{equation}

With similar techniques though projections are applied to $\v$ and $V$, we can get 
\begin{equation} 
\begin{split}  
&\E[\Delta_{fy,t}] \leq (1-\frac{\gamma}{2})\E[\delta_{fy,t-1}] + 2\gamma^2 \sigma^2 + 10\gamma L_{fy}^2\E[\delta_{y,t}] + \frac{2L_{fy}^2(1+L_y)^2}{\gamma} \E\|\x_t - \x_{t-1}\|^2, \\
&\E[\Delta_{gxy,t}] \leq (1-\frac{\gamma}{2})\E[\delta_{gxy,t-1}] + 2\gamma^2 \sigma^2 + 10\gamma L_{gxy}^2\E[\delta_{y,t}] + \frac{2L_{gxy}^2(1+L_y)^2}{\gamma} \E\|\x_t - \x_{t-1}\|^2. 
\end{split}
\end{equation}

Rearranging terms and taking summation for $t=0, ..., T$, we get
\begin{small}
\begin{equation}
\begin{split}
&\sum\limits_{t=0}^{T} \E[\Delta_{fx,t}] \leq 
\frac{2\E[\Delta_{fx, 0}]}{\gamma} + 4\gamma \sigma^2 (T+1)
+ 20L_{fx}^2\sum\limits_{t=1}^{T+1} \E[\delta_{y, t}]\\
&~~~~~~~~~~~~~~~~~~~~~~~~ 
+ \frac{4L_{fx}^2(1+L_y)^2}{\gamma^2} \sum\limits_{t=0}^{T} \E\|\x_t - \x_{t-1}\|^2,
\\
& \sum\limits_{t=0}^{T} \E[\Delta_{fy,t}] \leq 
\frac{2\E[\Delta_{fy, 0}]}{\gamma} + 4\gamma \sigma^2 (T+1) 
+ 20L_{fy}^2\sum\limits_{t=1}^{T+1} \E[\delta_{y,t}] \\
&~~~~~~~~~~~~~~~~~~~~~~~~ 
+ \frac{4L_{fy}^2(1+L_y)^2}{\gamma^2}  \sum\limits_{t=0}^{T} \E\|\x_t - \x_{t-1}\|^2, \\
&\sum\limits_{t=0}^{T} \E[\Delta_{gxy,t}] \leq 
\frac{2\E[\Delta_{gxy, 0} ]}{\gamma} + 4\gamma \sigma^2 (T+1)
+ 20L_{gxy}^2\sum\limits_{t=1}^{T+1} \E[\delta_{y,t}] \\
&~~~~~~~~~~~~~~~~~~~~~~~~ 
+ \frac{4L_{gxy}^2(1+L_y)^2}{\gamma^2} \sum\limits_{t=0}^{T} \E\|\x_t - \x_{t-1}\|^2, \\
&\sum\limits_{t=0}^{T} \E[\Delta_{gyy,t}] \leq 
\frac{2\E[\Delta_{gyy, 0}]}{\gamma} + \frac{20 L_{gyy}^2}{\lambda^4}\sum\limits_{t=1}^{T+1} \E[\delta_{y,t}] + 4\gamma C_5 (T+1) \\
&~~~~~~~~~~~~~~~~~~~~~~~~ +  \frac{4}{\gamma^2} \frac{L_{gyy}^2}{\lambda^4} (1+L_y^2) \sum\limits_{t=0}^{T}\E\|\x_t - \x_{t-1}\|^2 +  \frac{\sqrt{T+1}}{4}. 
\end{split} 
\end{equation}
\end{small}

By Lemma \ref{lem:sbo_y_recursion}, we have
\begin{equation} 
\begin{split} 
\sum\limits_{t=0}^{T} \E[\delta_{y,t}] &\leq \frac{2}{\eta_y \lambda} \E[\delta_{y,0}]  
+ \frac{4\eta_y \sigma^2(T+1)}{\lambda} + \sum\limits_{t=0}^{T} \frac{ 8 L_y^2 \eta_x^2}{\eta_y^2 \lambda^2} \E\|\z_{t+1}\|^2. 
\end{split} 
\end{equation} 
and 
\begin{equation} 
\begin{split} 
\sum\limits_{t=1}^{T+1} \E[\delta_{y,t}] &\leq \frac{2}{\eta_y \lambda} \E[\delta_{y,0}]  
+ \frac{4\eta_y \sigma^2(T+1)}{\lambda} + \sum\limits_{t=0}^{T} \frac{ 8 L_y^2 \eta_x^2}{\eta_y^2 \lambda^2} \E\|\z_{t+1}\|^2. 
\end{split} 
\end{equation}

Using Lemma \ref{lem:sbo_1} and Lemma \ref{lem:sbo_var_1_proj}, 
\begin{small}
\begin{equation}
\begin{split}
&\sum\limits_{t=0}^T \E\|\nabla F(\x_t)\|^2 \leq \frac{2(F(\x_0) -  F(\x_{T+1}))}{\eta_x} + \sum\limits_{t=0}^{T} \E\|\nabla F(\x_t) - \z_{t+1}\|^2 - \frac{1}{2} \sum\limits_{t=0}^{T}  \E\|\z_{t+1}\|^2 \\ 
& \leq \frac{2(F(\x_0) - F(\x_{T+1}))}{\eta_x} 
+ 2C_0 \sum\limits_{t=0}^{T} \E\|\y_t - \y^*(\x_{t})\|^2 + 2C_1 \sum\limits_{t=0}^{T} \E\|\u_{t+1} - \nabla_x f(\x_t, \y^*(\x_t))\|^2 \\
&~~~ + 2C_2 \sum\limits_{t=0}^{T} \E\|\v_{t+1} - \nabla_y f(\x_t, \y^*(\x_t))\|^2 
+ 2C_3 \sum\limits_{t=0}^{T} \E\| V_{t+1}-\nabla_{xy}^2 g(\x_t, \y^*(\x_t)) \|_F^2 \\
&~~~ + 2C_4 \sum\limits_{t=0}^{T} \E\|H_{t+1} - (\nabla_{yy}^2 g(\x_t, \y^*(\x_t)))^{-1}\|^2  
- \frac{1}{2}\sum\limits_{t=0}^{T} \E\|\z_{t+1}\|^2 \\ 
& \leq \frac{2(F(\x_0) - F(\x_{T+1}))}{\eta_x}
+ \frac{4C_1}{\gamma}\E[\Delta_{fx,0}]
+ \frac{4C_2}{\gamma}\E[\Delta_{fy,0}]
+\frac{4C_3}{\gamma}\E[\Delta_{gxy,0}]
+\frac{4C_4}{\gamma}\E[\Delta_{gyy, 0}] \\
&~~~ 
+ \left(8C_1+8C_2+8C_3+\frac{8C_4C_5}{\sigma^2}\right)\gamma\sigma^2(T+1)  
 \\
&~~~+  
\left( \frac{8 (1+L_y)^2\eta_x^2}{\gamma^2} (C_1 L_{fx}^2+C_2L_{fy}^2 +C_3L_{gxy}^2+\frac{C_4L_{gyy}^2}{ \lambda^4})  - \frac{1}{2}\right)\sum\limits_{t=0}^{T} \E\|\z_{t+1}\|^2 +\frac{\epsilon^2(T+1)}{4} \\
&~~~ 
+  \sum\limits_{t=0}^{T}\E\left[2C_0\delta_{y,t} + 40\left(C_1 L_{fx}^2+C_2L_{fy}^2+C_3L_{gxy}^2+\frac{C_4 L_{gyy}^2}{\lambda^4}\right)\delta_{y,t+1}\right]\\  
&\leq \frac{2(F(\x_0) - F(\x_{T+1}))}{\eta_x}
+\frac{4C_0+80(C_1 L_{fx}^2+C_2L_{fy}^2+C_3L_{gxy}^2+C_4 L_{gyy}^2/\lambda^4)}{\eta_y\lambda}\E[\delta_{y,0}] \\  
&~~~ + \frac{4C_1}{\gamma}\E[\Delta_{fx,0}]
+ \frac{4C_2}{\gamma}\E[\Delta_{fy,0}]
+\frac{4C_3}{\gamma}\E[\Delta_{gxy,0}] 
+\frac{4C_4}{\gamma}\E[\Delta_{gyy, 0}] \\
&~~~
+ \bigg(\frac{(8C_0+160(C_1L_{fx}^2+C_2L_{fy}^2
+C_3L^2_{gxy} + C_4L_{gyy}^2/\lambda^4) )\eta_y}{\lambda}\\
&~~~ +(8C_1+8C_2+8C_3+\frac{8C_4C_5}{\sigma^2})\gamma \bigg)\sigma^2(T+1) 
 \\
&  ~~~ 
+\frac{\epsilon^2(T+1)}{4}  
+  
\bigg[\frac{(16C_0+320(C_1+C_2+C_3+C_4 L_{gyy}^2/\lambda^4))L_y^2\eta_x^2}{\eta_y^2\lambda^2}\\ 
&~~~~~~~~~~+ \frac{8 (1+L_y)^2\eta_x^2}{\gamma^2} (C_1 L_{fx}^2+C_2 L_{fy}^2+C_3 L_{gxy}^2+\frac{C_4L_{gyy}^2}{ \lambda^4})  - \frac{1}{2}\bigg]\sum\limits_{t=0}^{T}  \E\|\z_{t+1}\|^2.\\
\end{split} 
\end{equation}
\end{small} 

By setting 
\begin{small}
\begin{equation*}
\begin{split}
\eta_x \leq&  \min\bigg(\frac{\eta_y \lambda}{56\sqrt{(C_0+C_1+C_2+C_3+C_4 L_{gyy}^2/\lambda^4)}L_y} 
, \\
&~~~\frac{\gamma}{8(1+L_y)}(C_1 L_{fx}^2+C_2 L_{fy}^2+C_3 L_{gxy}^2+\frac{2C_4L_{gyy}^2}{ \lambda^4})^{-1} \bigg), 
\end{split} 
\end{equation*}
\end{small}
we have

\begin{equation*} 
\begin{split}
&\frac{1}{T+1}\sum\limits_{t=0}^{T} \E\|\nabla F(\x_t)\|^2 \\
&\leq \frac{2(F(\x_0) - F(\x_{T+1}))}{\eta_x}
+\frac{4C_0}{\eta_y\lambda}\E[\delta_{y,0}] + \frac{4C_1}{\gamma}\E[\Delta_{fx,0}]
+ \frac{4C_2}{\gamma}\E[\Delta_{fy,0}]
+\frac{4C_3}{\gamma}\E[\Delta_{gxy,0}] 
+\frac{4C_4}{\gamma}\E[\Delta_{gyy, 0}] \\
&+ \left(\frac{(8C_0+160(C_1L_{fx}^2+C_2L_{fy}^2
+C_3L^2_{gxy} + C_4L_{gyy}^2/\lambda^4) )\eta_y}{\lambda}+(8C_1+8C_2+8C_3+\frac{8C_4C_5}{\sigma^2})\gamma\right)\sigma^2(T+1) 
 \\ 
&  +\frac{\sqrt{T+1}}{4}. 
\end{split} 
\end{equation*}
With $\gamma=O(\frac{1}{\sqrt{T}})$, $\eta_x=O(\frac{1}{\sqrt{T}})$, and $\eta_y=O(\frac{1}{\sqrt{T}})$,
we have 
\begin{equation}
\begin{split}
\frac{1}{T+1} \E\left[ \sum\limits_{t=0}^{T}  \|\nabla F(\x_t)\|^2 \right] \leq O\left(\frac{1}{\sqrt{T}}\right),
\end{split}
\end{equation}
which concludes the first part of the theorem. 
For the second part, by Lemma \ref{lem:sbo_var_1_proj}, we have
\begin{small}
\begin{equation*} 
\begin{split} 
&\sum\limits_{t=0}^{T} \E[\Delta_{z, t} + C_0 \delta_{y,t}]  \\ 
&\leq  \sum\limits_{t=0}^{T}\E[2C_0 \delta_{y,t+1} + C_0\delta_{y, t}]  +  \sum\limits_{t=0}^{T} \left(2C_1\E[\Delta_{fx, t}] 
+2C_2\E[\Delta_{fy,t}] + 2C_3  \E[\Delta_{gxy, t}]  +2C_4  \E[\Delta_{gyy,t}] \right) \\
&\leq \frac{4C_1\E[\Delta_{fx, 0}]}{\gamma} 
+ \frac{4C_2 \E[\Delta_{fy, 0}]}{\gamma}
+ \frac{4C_3\E[\Delta_{gxy, 0} ]}{\gamma}
+ \frac{4C_4\E[\Delta_{gyy, 0}]}{\gamma}\\
& + 8(C_1 + C_2+C_3 +C_4C_5/\sigma^2)\gamma \sigma^2 (T+1) \\
& + \left( \frac{8(C_1L_{fx}^2 + C_2L_{fy}^2+C_3L_{gxy}^2+C_4L^2_{gyy}/\lambda^4)(1+L_y)^2}{\gamma^2} \right) \sum\limits_{t=0}^{T} \E[\|\z_{t+1}\|^2] + \frac{\sqrt{T+1}}{4}\\
& + \sum\limits_{t=0}^T \E[(2C_0 + 40(C_1 L_{fx}^2 + C_2 L_{fy}^2 + C_3 L_{gxy}^2 + C_4 L_{gyy}^2/\lambda^4) )\delta_{y,t+1} + C_0 \delta_{y,t}] \\  
&\leq  \frac{2(F(\x_0) - F(\x_{T+1}))}{\eta_x}
+\frac{4C_0}{\eta_y\lambda}\E[\delta_{y,0}] + \frac{4C_1}{\gamma}\E[\Delta_{fx,0}]
+ \frac{4C_2}{\gamma}\E[\Delta_{fy,0}]
+\frac{4C_3}{\gamma}\E[\Delta_{gxy,0}] 
+\frac{4C_4}{\gamma}\E[\Delta_{gyy, 0}] \\
&
+ \left(\frac{(8C_0+160(C_1L_{fx}^2+C_2L_{fy}^2
+C_3L^2_{gxy} + C_4L_{gyy}^2/\lambda^4) )\eta_y}{\lambda}+(8C_1+8C_2+8C_3+\frac{8C_4C_5}{\sigma^2})\gamma\right)\sigma^2(T+1) 
 \\
&   
+\frac{\sqrt{T+1}}{4}  
+  
\bigg[\frac{(16C_0+320(C_1+C_2+C_3+C_4 L_{gyy}^2/\lambda^4))L_y^2\eta_x^2}{\eta_y^2\lambda^2}\\ 
&~~~~~~~~~~~~~~~~~~~~~~~~~~~~~~~ + \frac{8 (1+L_y)^2\eta_x^2}{\gamma^2} (C_1 L_{fx}^2+C_2 L_{fy}^2+C_3 L_{gxy}^2+\frac{C_4L_{gyy}^2}{ \lambda^4})  - \frac{1}{2}\bigg]\sum\limits_{t=0}^{T}  \E\|\z_{t+1}\|^2,
\end{split}
\end{equation*}
which is followed by
\begin{equation*} 
\begin{split} 
&\sum\limits_{t=0}^{T} \E[\Delta_{z, t} + C_0 \delta_{y,t}] 
\leq \frac{2(F(\x_0) - F(\x_{T+1}))}{\eta_x}
+\frac{6C_0+80(C_1 L_{fx}^2+C_2L_{fy}^2+C_3L_{gxy}^2+C_4 L_{gyy}^2/\lambda^4)}{\eta_y\lambda}\E[\delta_{y,0}] \\  
& + \frac{4C_1}{\gamma}\E[\Delta_{fx,0}]
+ \frac{4C_2}{\gamma}\E[\Delta_{fy,0}]
+\frac{4C_3}{\gamma}\E[\Delta_{gxy,0}] 
+\frac{4C_4}{\gamma}\E[\Delta_{gyy, 0}] \\
&
+ \left(\frac{(12C_0+160(C_1L_{fx}^2+C_2L_{fy}^2
+C_3L^2_{gxy} + C_4L_{gyy}^2/\lambda^4) )\eta_y}{\lambda}+(8C_1+8C_2+8C_3+\frac{8C_4C_5}{\sigma^2})\gamma\right)\sigma^2(T+1) 
 \\
&
+\frac{\sqrt{T+1}}{4}  
+  
\frac{1}{3}\sum\limits_{t=0}^{T}  \E\|\z_{t+1} - F(\x_t) + F(\x_t)\|^2,
\end{split}
\end{equation*}  
\end{small}   
which implies that with parameters set as above we have
\begin{equation}
\begin{split}
\frac{1}{T+1}\sum\limits_{t=0}^{T} \E[\Delta_{z, t} + C_0 \delta_{y,t}] \leq O\left(\frac{1}{\sqrt{T}} \right). 
\end{split}
\end{equation} 
\end{proof}

\section{Min-Max Formulation of AUC Maximization Problem}
\label{sec:app_auc_formulation}
The area under the ROC curve (AUC) on a population level for a scoring function $h: \mathcal{X}\rightarrow\mathbb{R}$ is defined as 
\vspace{-0.1in} 
\begin{equation} 
AUC(h) = \text{Pr}(h(\mathbf{a}) \geq h(\mathbf{a}') \vert b=1, b'=-1),
\end{equation}  
where $\mathbf{a}, \mathbf{a}' \in \mathbf{R}^{d_0}$ are data features, $b, b' \in \{-1, 1\}$ are the labels, $\z = (\mathbf{a}, b)$ and $\z'=(\mathbf{a}', b')$ are drawn independently from $\mathbb{P}$.
By employing the squared loss as the surrogate for the indicator function which is commonly used by previous studies~\citep{ying2016stochastic,liu2018fast,liu2019stochastic}, the deep AUC maximization problem can be formulated as 
\begin{equation}
\min\limits_{\w\in\mathbb{R}^d} \E_{\z, \z'}\left[(1-h(\w;\mathbf{a}) +h(\w;\mathbf{a}'))^2 \vert b=1, b'=-1 \right], 
\label{prob:auc_square} 
\end{equation}  
where $h(\w; \mathbf{a})$ denotes the prediction score for a data sample $\mathbf{a}$ made by a deep neural network parameterized by $\w$. 
It was shown in \citep{ying2016stochastic} that the above problem is equivalent to the following min-max problem: 
\begin{equation}
\label{auc_min-max-1}
\min\limits_{(\w, s, r)} \max\limits_{y\in \mathbb{R}} f(\w, s, r, y)=\E_\z[F(\mathbf{w}, s, r, y, \z)], 
\end{equation}
where 
\begin{equation}
\begin{split}
&F(\w, s, r, y; \z) = (1-p) (h(\w; \mathbf{a})- s)^2 \mathbb{I}_{[b=1]} 
+ p(h(\w; \mathbf{a}) - r)^2\mathbb{I}_{[b=-1]}  \\
&
+ 2(1+y)(p h(\w; \mathbf{a})\mathbb{I}_{[b=-1]} 
- (1-p) h(\w;\mathbf{a}) \mathbb{I}_{[b=1]}) - p(1-p)y^2,
\end{split} 
\end{equation}
where $p=\Pr(b=1)$ denotes the prior probability that an example belongs to the positive class, and $\mathbb I$ denotes an indicator function whose output is $1$ when the condition holds and $0$ otherwise. 
We denote the primal variable by $\x=(\w, s, r)$. 

\section{Verification of Assumption \ref{ass:2}}

To verify the Assumption 2, we plot the bounds of the $\s_{t,i}$ in Figure \ref{fig:verify_ass2}. We set $\beta=0.9, \beta'=0.999, G_0 = 1e^{-7}$ as in common practice of Adam. As in other experiments, we run it for 60 epochs. We can see that within the training process, $\s_{t,i}$ is both upper and lower bounded by moderate constants. Note that when Assumption \ref{ass:2} does not hols, we can clip $\s_{t,i}$ as discussed in Section \ref{sec:adamin}.

\begin{figure}[htbp]
    \centering
    \includegraphics[width=2in]{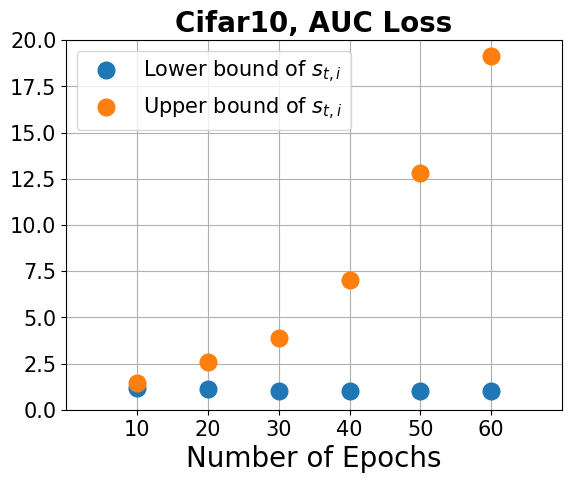}
    \caption{Verification of Assumption \ref{ass:2}} 
    \label{fig:verify_ass2} 
\end{figure}

\end{document}